\documentclass[12pt]{amsart}
\usepackage{amssymb,amsmath,amsthm, geometry, accents, enumerate}
\usepackage[shortlabels]{enumitem}
\usepackage{mathrsfs}
\usepackage{ color, soul, hyperref,mathtools, faktor}
\usepackage{stmaryrd} 
\usepackage[all]{xy}
\usepackage{tikz-cd} 
\hypersetup{
    colorlinks=true,
    linkcolor=blue,
    filecolor=magenta,      
    urlcolor=cyan,
    citecolor=blue,
}

\usepackage{cleveref}
\usepackage[scr=boondox]   
           {mathalpha}
\everymath{\displaystyle}

\usepackage{todonotes}

\newcommand*\xbar[1]{%
  \hbox{%
    \vbox{%
      \hrule height 0.5pt 
      \kern0.5ex
      \hbox{%
        \kern-0.1em
        \ensuremath{#1}%
        \kern-0.1em}}}}
      
\geometry{
	paper=a4paper,
	top=2.75cm, 
	bottom=2.75cm, 
	left=3cm, 
	right=3cm, 
	headheight=14pt, 
	footskip=1.00cm, 
	headsep=1.2cm, 
}
\def\temp{&} \catcode`&=\active \let&=\temp

\newtheorem{thm}{Theorem}[section]
\newtheorem{cor}[thm]{Corollary}
\newtheorem{lemma}[thm]{Lemma}
\newtheorem*{lemma*}{Lemma}
\newtheorem*{thm*}{Theorem}
\newtheorem{prop}[thm]{Proposition}
\theoremstyle{definition}
\newtheorem{defn}[thm]{Definition}
\newtheorem{eg}[thm]{Example}
\newtheorem{defn-rmk}[thm]{Definition-Remark}
\newtheorem{notationdefinition}[thm]{Definition-Notation}
\theoremstyle{remark}
\newtheorem{rmk}[thm]{Remark}

\DeclareMathOperator{\width}{W}

\DeclareMathOperator{\Ann}{Ann}
\DeclareMathOperator{\ch}{char}
\DeclareMathOperator{\hd}{hd}
\DeclareMathOperator{\grade}{grade}
\DeclareMathOperator{\Supp}{Supp}
\DeclareMathOperator{\pd}{pd}
\DeclareMathOperator{\proj}{Proj}

\DeclareMathOperator{\Ext}{Ext}
\DeclareMathOperator{\Hom}{Hom}

\DeclareMathOperator{\supph}{Suppi}
\renewcommand{\min}{\mbox{\rm min}}
\renewcommand{\max}{\mbox{\rm max}}
\DeclareMathOperator{\im}{im}
\DeclareMathOperator{\cone}{cone}
\DeclareMathOperator{\spc}{Sp}
\DeclareMathOperator{\bisp}{BiSp}
\DeclareMathOperator{\mods}{mod(\textit{S})}
\DeclareMathOperator{\modr}{mod(\textit{R})}
\DeclareMathOperator{\starmods}{*mod(\textit{S})}

\DeclareMathOperator{\stark}{*\textit{K}}
\DeclareMathOperator{\tildes}{\underaccent{\tilde}{\it{s}}}
\DeclareMathOperator{\tildef}{\underaccent{\tilde}{\it{f}}}

\renewcommand{\mod}{\mbox{mod}}
\newcommand{\CB}{\mathcal{B}}
\newcommand{\F}{\mathcal{F}}
\newcommand{\G}{\mathcal{G}}
\renewcommand{\H}{\mathcal{H}}
\newcommand{\I}{\mathcal{I}}
\newcommand{\OX}{\mathcal{O}_X}
\renewcommand{\O}{\mathcal{O}}
\newcommand{\CT}{\mathcal{T}}
\newcommand{\CU}{\mathcal{U}}
\newcommand{\CW}{\mathcal{W}}
\newcommand{\CZ}{\mathcal{Z}}
\DeclareMathOperator{\sheafext}{\mathcal{E}\kern -.5pt\textit{xt}}
\DeclareMathOperator{\sheafhom}{\mathcal{H}\kern -.5pt\textit{om}}

\newcommand{\T}{\mbox{$\mathscr{T}$}}
\newcommand{\CA}{\mathscr{A}}
\newcommand{\C}{\mathscr{C}}
\newcommand{\V}{\mathscr{V}}
\renewcommand{\L}{\mathscr{L}}
\newcommand{\SE}{\mathscr{E}}


\newcommand{\p}{\mathfrak{p}}
\newcommand{\m}{\mathfrak{m}}

\newcommand{\N}{\mathbb N}
\newcommand{\Z}{\mathbb Z}
\DeclareMathOperator{\Kth}{\mbox{$\mathbb K$}}
\DeclareMathOperator{\GWth}{\mbox{$\mathbb GW$}}

\newcommand{\starp}{\mbox{*$\mathscr{P}$}}
\newcommand{\starf}{\mbox{*$\mathscr{F}$}}
\newcommand{\start}{\mbox{*\textit{T}}}

\newcommand{\DbLT}{\mbox{${D^b_{\tiny \L}}(\mathscr T)$}}


 \newcommand{\Oplus}{\ensuremath{\vcenter{\hbox{\scalebox{1.5}{$\oplus$}}}}}


\begin{document}

\pagenumbering{arabic}

\title{Derived equivalences for chain complexes with support} 
\date{}
\author{Ganapathy Krishnamoorthy and Sarang Sane}

\address{Ganapathy Krishnamoorthy*\\
Department of Mathematics \\
Indian Institute of Technology Madras\\
Sardar Patel Road \\
Chennai India 600036}
\email{ganapathy.math@gmail.com}
\address{Sarang Sane\\
Department of Mathematics \\
Indian Institute of Technology Madras\\
Sardar Patel Road \\
Chennai India 600036}
\email{sarangsanemath@gmail.com}
\keywords{Derived equivalence, Serre subcategory, Resolving subcategory, Quasi-projective scheme}

\subjclass[2020]{Primary: 18G80; Secondary: 13D05, 14F08, 18G20}
\begin{abstract}
  For a Serre subcategory $\mathscr L$ and a resolving subcategory $\mathscr A$ of an abelian category, we show that the derived equivalence $D^b(\overline{\mathscr A} \cap \mathscr L) \simeq D^b_{\mathscr L}(\mathscr A)$ holds under certain conditions. We apply this to obtain derived equivalences in the contexts of chain complexes of graded modules or coherent sheaves, with finite $\mathscr A$-dimension, supported on closed sets having eventually finite $\mathscr A$-dimension. Using this, we obtain descriptions of the homotopy fibers in (hermitian) K theory of the restriction maps to certain open sets.
\end{abstract}

\maketitle

\section{Introduction}
Let $R$ be a commutative noetherian ring and $I \subseteq R$ an ideal. Consider the category $\modr$ of finitely generated $R$-modules, $\mod(V(I))$ the full subcategory consisting of $I$-torsion modules and $Ch^b_{V(I)}(\modr)$ the category of bounded chain complexes of $R$-modules with homologies in $\mod(V(I))$. It is a well-known fact that the inclusion functor $Ch^b(\mod(V(I))) \to Ch^b_{V(I)}(\modr)$ induces an equivalence $D^b(\mod(V(I)))\to D^b_{V(I)}(\mod(R))$.  More generally, for an abelian category $\C$ and a Serre subcategory $\L$, with analogous definitions, it is classically known that under suitable hypotheses, the functor $D^b(\L) \to D^b_{\tiny \L}(\C)$ is an equivalence \cite{Grothendieck, Keller}. 

In a previous article \cite{GanapathySane}, we proved that if $\L$ is the full subcategory of $I$-torsion modules where $I$ has eventually finite projective dimension, and $\T$ is a thick subcategory of $\modr$, then $D^b(\T \cap \L) \simeq D^b_{\tiny \L}(\T)$. In \Cref{sec:derived-equivalence-for-abelian-category} of this article, we consider thick subcategories $\T$, Serre subcategories $\L$ of an abelian category $\C$ and the functor $Ch^b(\T \cap \L) \to Ch^b_{\tiny \L}(\T)$. In \Cref{thm:derived-equivalence-main-abelian-category}, we show that if for every non-exact chain complex $X_{\bullet}$ in $Ch^b_{\tiny \L}(\C)$ with $\min(X_\bullet) = m$ and a morphism $f \colon X_m \to Q$ with $Q \in \L$, there exists a chain complex $T_{\bullet} \in Ch^b_{\tiny \L}(\T)$ and a morphism $\alpha \colon T_{\bullet} \to X_{\bullet}$ satisfying the properties as in \Cref{defn:SR-property-for-an-abelian-category}, then $D^b(\T \cap \L) \to D^b_{\tiny \L}(\T)$ is an equivalence of categories. We define $(T_{\bullet},\alpha)$ to be a strong reducer for $(X_{\bullet},f)$ and say that $\L$ has the SR property with respect to $\T$, if a strong reducer exists for every such pair $(X_\bullet,f)$. This definition is a generalization of the definition of the SR property in \cite[Definition 4.1]{GanapathySane} to any abelian category. Earlier special cases of the derived equivalence are in \cite{SandersSane} when $I$ is a complete intersection, and in \cite{Mandal2015}, which proves a sheaf theoretic version of the equivalence for quasi-projective schemes.

A key component in the proof of the derived equivalence is a result of independent interest about the connection between Koszul complexes and Tate resolutions on a sequence of elements $\tildes = s_1 , s_2 , \ldots , s_d \in R$ and their powers $\tildes^u$. Let $K_\bullet(\tildes)$ and $T_\bullet(\tildes)$ denote the Koszul complex and Tate resolution on $\tildes$ respectively. Recall that there is a natural inclusion $K_\bullet(\tildes) \rightarrow T_\bullet(\tildes)$ and a natural chain complex map $\kappa \colon K_\bullet(\tildes^{u}) \rightarrow K_\bullet(\tildes)$. In \cite[Theorem 3.5]{GanapathySane}, we proved that for sufficiently large $u$, the map $\kappa$ factors through $T_\bullet(\tildes^u)$.

In \Cref{sec:graded Tate etc.} of this article, we generalize the above result connecting Koszul complexes and Tate resolutions on powers of sequences to graded rings (\Cref{lem:graded-tate-to-koszul}) when $\tildes$ is a sequence of homogeneous elements. This result has an interesting consequence related to local cohomology. Recall that local cohomology can be defined in two ways, namely as the cohomology obtained by tensoring with the \v{C}ech complex, or as the derived functors of the torsion functor $\Gamma_{I}(\_)$, which are direct limits of $ \varinjlim \Ext^{i}_R(R/I^n , \_)$. 
The following theorem which appears as \Cref{thm:local-cohomology} yields a stronger connection between the \v{C}ech complex and Tate resolutions than previously known :
\begin{thm}
    Let $S$ be a commutative noetherian graded ring, $\tildes = s_1,\dots,s_d$  homogeneous elements in $S$, and $M$ a graded $S$-module. 
    Then, there exists an infinite set $E \subseteq \N$ such that $$\check{C}^{\bullet}(\tildes;M) \cong \varinjlim_E \Hom_S(\start_{\bullet}(\tildes^n),M)$$ for some choice of graded chain complex maps $\start_{\bullet}(\tildes^{n_2}) \to \start_{\bullet}(\tildes^{n_1})$, which lift the natural surjections $S/(\tildes^{n_2}) \to S/(\tildes^{n_1})$ for all $n_1,n_2 \in E$ with $n_2>n_1$.
\end{thm}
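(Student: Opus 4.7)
The plan is to realize both sides of the claimed isomorphism as directed colimits indexed by a common cofinal sequence in $\N$, and then to identify them via an interleaving argument powered by \Cref{lem:graded-tate-to-koszul}. The starting point is the standard isomorphism
$$\check{C}^{\bullet}(\tildes;M) \;\cong\; \varinjlim_n \Hom_S(K_{\bullet}(\tildes^n), M),$$
in which $K_\bullet(\tildes^n)$ denotes the graded Koszul complex and the transition maps $K_\bullet(\tildes^{n_2}) \to K_\bullet(\tildes^{n_1})$ for $n_2 > n_1$ are the natural graded chain maps given on exterior-algebra generators by the appropriate powers of the $s_i$'s; these lift the quotient maps $S/(\tildes^{n_2}) \to S/(\tildes^{n_1})$. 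The aim is to replace the Koszul complexes by graded Tate resolutions, indexed by a suitably chosen infinite subset $E$ of $\N$.

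First I would construct $E$ inductively using \Cref{lem:graded-tate-to-koszul}. Starting from $n_1 = 1$, and assuming $n_1 < \cdots < n_k$ have been chosen, I would apply the lemma to the homogeneous sequence $\tildes^{n_k}$ to obtain an exponent $u$ large enough that the natural Koszul map $K_\bullet\bigl((\tildes^{n_k})^u\bigr) \to K_\bullet(\tildes^{n_k})$ factors as a graded chain map through $\start_\bullet\bigl((\tildes^{n_k})^u\bigr) = \start_\bullet(\tildes^{n_k u})$. Setting $n_{k+1} := n_k u$, this yields a factorization
$$K_\bullet(\tildes^{n_{k+1}}) \xrightarrow{\iota_{k+1}} \start_\bullet(\tildes^{n_{k+1}}) \xrightarrow{\tau_{k+1,k}} K_\bullet(\tildes^{n_k}),$$
where $\iota_{k+1}$ is the natural inclusion of the Koszul complex into the Tate resolution. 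I would then define the required Tate-to-Tate transition as the composite
$$\sigma_{k+1, k} \;:=\; \iota_k \circ \tau_{k+1, k} \colon \start_\bullet(\tildes^{n_{k+1}}) \to \start_\bullet(\tildes^{n_k}).$$
Verifying that $\sigma_{k+1,k}$ is a graded chain map lifting the surjection $S/(\tildes^{n_{k+1}}) \to S/(\tildes^{n_k})$ is routine, as it is a composition of such maps.

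Next, applying $\Hom_S(-, M)$ reverses arrows and produces the interleaved direct system of graded cochain complexes
$$\cdots \to \Hom_S(K_\bullet(\tildes^{n_{k}}), M) \xrightarrow{\tau_{k+1,k}^{*}} \Hom_S(\start_\bullet(\tildes^{n_{k+1}}), M) \xrightarrow{\iota_{k+1}^{*}} \Hom_S(K_\bullet(\tildes^{n_{k+1}}), M) \to \cdots$$
whose composite $K$-to-$K$ transitions $\iota_{k+1}^{*} \circ \tau_{k+1, k}^{*}$ recover the standard Koszul transitions, and whose composite $\start$-to-$\start$ transitions $\tau_{k+1, k}^{*} \circ \iota_k^{*}$ recover $\sigma_{k+1, k}^{*}$. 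By the standard cofinality principle for colimits of chains, each subsystem computes the same colimit as the interleaved system, so
$$\varinjlim_E \Hom_S(\start_\bullet(\tildes^{n_k}), M) \;\cong\; \varinjlim_E \Hom_S(K_\bullet(\tildes^{n_k}), M).$$
Since $E$ is cofinal in $\N$, the right-hand colimit equals $\varinjlim_n \Hom_S(K_\bullet(\tildes^n), M) \cong \check{C}^{\bullet}(\tildes; M)$, and the theorem follows.

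I expect the principal difficulty to lie in \Cref{lem:graded-tate-to-koszul} itself, which upgrades \cite[Theorem 3.5]{GanapathySane} to the graded setting and furnishes the crucial factorization; once that lemma is in hand, the present theorem follows by the formal colimit assembly outlined above. The only subtlety in the assembly is checking that the graded chain maps $\sigma_{k+1, k}$ lift the prescribed quotient maps on $0$-th homology, but this reduces to the compatibility of the Koszul and Tate augmentations to $S/(\tildes^{n_k})$ under the natural inclusion $\iota_k$.
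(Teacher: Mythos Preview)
Your proposal is correct and follows essentially the same approach as the paper: construct $E$ by iterating \Cref{lem:graded-tate-to-koszul}, use the factorization $K_\bullet(\tildes^{n_{k+1}}) \hookrightarrow \start_\bullet(\tildes^{n_{k+1}}) \to K_\bullet(\tildes^{n_k})$ to define the Tate transition maps as $\iota_k \circ \tau_{k+1,k}$, and then apply $\Hom_S(-,M)$ and a cofinality argument to identify the two direct limits. The only cosmetic difference is that the paper applies the lemma with the fixed sequence $\tildes$ and varying $r=n_k$ (so $E=\{u^{(t)}(1):t\in\N\}$), whereas you apply it to the sequence $\tildes^{n_k}$ with $r=1$; both yield the required interleaved system, and your explicit zigzag formulation is exactly the content of the paper's commutative diagram.
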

Using this, we obtain a different and much more direct proof of the equivalence of the two definitions of local cohomology, than the standard one.

In Sections~\ref{sec:resolving-subcategories} and \ref{sec:resolving-der-eq}, we consider a resolving subcategory $\CA$ of $\C$ (in the sense of \cite{Ruben-Sondre-Roosmalen}, refer to \Cref{defn:resolving-subcategory}). Since projective objects are no longer assumed to be in $\CA$, and $\C$ isn't assumed to have enough projectives, several basic homological algebra statements need to be proved. Thus, we prove results such as the existence of a short exact sequence of resolutions for a short exact sequence of objects in $\C$, the existence of $\dim_{\tiny \CA}$, and the existence of resolving double complexes of a chain complex. Having established these basics, we observe that $\overline{\CA}$ is a thick subcategory and establish the usual derived equivalence $D^b_{\tiny \L}(\CA) \simeq D^b_{\tiny \L}(\overline{\CA})$ for any Serre subcategory $\L$. Further, let $\overline{\CA}^{\leq g}$ be the full subcategory of $\C$ consisting of objects $\{C \in \mathscr C \mid \dim_{\tiny \CA}(C)\leq g\}$. We show that $D^b(\overline{\CA}^{\leq g} \cap \L) \simeq D^b(\overline{\CA} \cap \L)$ whenever $\CA$ has the property that for every $C \in \overline{\CA} \cap \L$, there exists $C' \in \overline{\CA}^{\leq g} \cap \L$ such that there is a epimorphism $C' \to C$ in $\C$.
Now putting all the equivalences together yields \Cref{thm:derived-equivalence-for-abelian-category-resolving-sub} :
\begin{thm}\label{thm:intro2}
    Let $\L$ be a Serre subcategory and $\CA$ a resolving subcategory of $\C$. 
    Suppose $\L$ has the SR property with respect to $\overline{\CA}$, and for each $C \in \overline{\CA} \cap \L$, there exist $C' \in \overline{\CA}^{\leq g} \cap \L$ and an epimorphism $C' \to C$ in $\C$ for some $g \in \N$. Then  $$D^b(\overline{\CA}^{\leq g} \cap \L) \simeq D^b(\overline{\CA} \cap \L) \simeq D^b_{\tiny \L}({\overline{\CA}}) \simeq D^b_{\tiny \L}({\CA}).$$
\end{thm}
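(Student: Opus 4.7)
The plan is to assemble the theorem by concatenating the three equivalences that have already been established, in order, in the preceding sections. Each equivalence is identified in the introduction as the main output of a section, so this final theorem is essentially a combination statement, and the proof reduces to checking that the hypotheses of each individual equivalence are satisfied by our assumptions.

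\medskip

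\noindent\textbf{Step 1: $D^b(\overline{\CA}^{\leq g} \cap \L) \simeq D^b(\overline{\CA} \cap \L)$.} First I invoke the result from \Cref{sec:resolving-subcategories} which gives this equivalence whenever every object of $\overline{\CA} \cap \L$ admits an epimorphism from some object in $\overline{\CA}^{\leq g} \cap \L$. This is exactly the second hypothesis of the theorem, so the equivalence applies directly.

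\medskip

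\noindent\textbf{Step 2: $D^b(\overline{\CA} \cap \L) \simeq D^b_{\tiny \L}(\overline{\CA})$.} For this I apply \Cref{thm:derived-equivalence-main-abelian-category} with the thick subcategory $\T = \overline{\CA}$. The hypothesis required is that $\L$ has the SR property with respect to $\T = \overline{\CA}$ in the sense of \Cref{defn:SR-property-for-an-abelian-category}, which is precisely the first hypothesis of the present theorem. One only needs to check that $\overline{\CA}$ is thick in $\C$, and this is recorded in \Cref{sec:resolving-subcategories} as part of the basic homological algebra established there for resolving subcategories.

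\medskip

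\noindent\textbf{Step 3: $D^b_{\tiny \L}(\overline{\CA}) \simeq D^b_{\tiny \L}(\CA)$.} This is the equivalence proved in \Cref{sec:resolving-subcategories} directly from the fact that every object of $\overline{\CA}$ admits a bounded $\CA$-resolution (by definition of $\overline{\CA}$ and the existence of $\dim_{\tiny \CA}$ shown in that section), together with the existence of resolving double complexes. No additional hypothesis is needed beyond $\CA$ being resolving, which is assumed.

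\medskip

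Composing the three equivalences yields the chain of equivalences in the conclusion. There is no genuine obstacle since all the heavy lifting is carried out in the cited results; the only thing to verify is that the hypotheses of the theorem are precisely what each of the three individual equivalences requires, which is immediate by construction. If any care is needed, it is in making sure the functors realizing these equivalences are the natural ones induced by the inclusions $\overline{\CA}^{\leq g}\cap\L \hookrightarrow \overline{\CA}\cap\L \hookrightarrow \overline{\CA} \hookleftarrow \CA$, so that the composite equivalence is also induced by an inclusion at the level of chain complexes; this follows because each of the three equivalences, as stated in its respective section, is induced by such an inclusion of chain complexes.
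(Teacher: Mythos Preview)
Your proposal is correct and follows essentially the same approach as the paper: the theorem is stated there as an immediate combination of \Cref{thm:derived-equivalence-AbarcapL-AbarsuppL-AsuppL} (which itself packages your Steps 2 and 3 together via \Cref{thm:derived-equivalence-main-abelian-category} and \Cref{thm:equivalence-of-Abar-and-A}) and \Cref{thm:derived-eq-keller-abelian-category} (your Step 1). Your decomposition into three steps is slightly finer than the paper's two-step statement, but the content is identical.
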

In the special case that $\C$ is $\modr$, $\CA$ is the resolving subcategory of projective modules, $\L$ is the Serre subcategory of $I$-torsion modules for a perfect ideal $I$, and $g = \grade(I)$, we see that $\overline{\CA}^{\leq g} \cap \L$ is the subcategory of perfect modules supported on $V(I)$. Further, if $\ch(R)$ is prime, then $D^b(\overline{\CA}^{\leq g} \cap \L) \simeq D^b(\overline{\CA} \cap \L)$. In \Cref{sec: der equiv graded} of this article, we prove similar results when $\C$ is the category of finitely generated graded modules over a commutative noetherian graded ring $S$. Analogous sheaf theoretic results are proved in \cite{Mandal2025} for quasi-projective schemes when $\L$ is the Serre subcategory of coherent sheaves supported on a complete intersection subscheme.



In \Cref{sec: der equiv quasi proj}, we apply \Cref{thm:intro2} to obtain derived equivalences for the category of coherent sheaves over a quasi-projective scheme over a noetherian affine scheme. The main idea is to extend a chain complex of coherent sheaves over a quasi-projective scheme to its projective closure, ensuring that the support of the extended chain complex is contained in the closure of the support of the given chain complex. We use this to show that the SR-property, and hence the derived equivalence, holds for certain Serre subcategories and thick subcategories of the category of coherent sheaves (e.g., see \Cref{thm:SR-prop-holds-cohx}).
One special case recovers the corresponding results in \cite{Mandal2015} and \cite{Mandal2025}. Another special case is when the resolving subcategory is the subcategory of locally free sheaves, denoted by $\V(X)$, and the scheme has prime characteristic, which yields the following theorem:
\begin{thm}\label{thm:intro-3}
Let $X$ be a quasi-projective scheme over a noetherian affine scheme, $V \subseteq X$ a closed set, and $\L = \L_{V}$. Suppose $X$ has prime characteristic and $\I_V$ is perfect with $\grade_X(\I_V) = g$ for some ideal sheaf $\I_V$ defining a closed subscheme structure on $V$. Then $$D^b(\overline{\V(X)}^{\leq g} \cap \L) \simeq D^b( \overline{\V(X)} \cap \L) \simeq D^b_{\tiny \L}( \overline{ \V(X))} \simeq D^b_{\tiny \L}(\V(X)).$$
\end{thm}

In \Cref{sec:result-on-K-theory-and-G-theory}, we describe the homotopy fibers of the maps in $\Kth$-theory and $\GWth$-theory given by restriction to the open set $U = X \setminus V$ (with the hypothesis and notations as in \Cref{thm:intro-3}). In particular, in theorems \Cref{thm:K-thry-fibration} and \Cref{thm:G-thry-fibration}, we state that the following sequences are homotopy fibrations in the category of bispectra:
\begin{align*}
    \Kth(\overline{\V(X)}^{\leq g} \cap \L) \to \Kth(\V(X)) \to \Kth(\V(U)) & \quad  \text{in} \spc\\
    \GWth^{[-g]}(\overline{\V(X)}^{\leq g} \cap \L) \to \GWth(\V(X)) \to \GWth(\V(U)) & \quad \text{in} \bisp
\end{align*}
This generalizes earlier results in \cite{Mandal2025} where $V$ is a complete intersection subscheme. Since the details are the same as in \cite{Mandal2025}, we omit proofs.

    \section{Preliminaries}
    Throughout the article, we set the following notations: let $S = \Oplus_{i \in \mathbb Z} S_i$ be a noetherian graded ring and $\starmods$ the category of graded $S$-modules whose objects are finitely generated graded $S$-modules and morphisms between two graded $S$-modules are degree zero graded $S$-module homomorphism. Recall that a graded chain complex of $S$-modules is a chain complex of graded modules with all differential maps being degree zero graded $S$-module homomorphisms.
    \par For an $S$-module $M$ and $\p \in \proj(S)$, the homogeneous localization $M_{(\p)}$ is defined to be the localization of $M$ with respect to the multiplicative set containing all the homogeneous elements of $S$ which are not in $\p$. Then $\Supp_S(M)$ is defined to be the collection of all primes $\p \in \proj(S)$ such that $M_{(\p)} \neq 0$.

    Let $X$ denote a quasi-projective scheme over $Spec(A)$ where $A$ is a noetherian ring and $Coh(X)$ the category of coherent sheaves of $\OX$-modules. For $\F \in Coh(X)$, $$\Supp_X(\F) = \{ x \in X \mid \F_x \neq 0 \},$$ which is a closed set in $X$.
    \begin{rmk}\label{rmk:support-of-module-and-tilde}
        Suppose $S = \Oplus_{i \geq 0} S_i$ is finitely generated by $S_1$ as an $S_0$-algebra. For an $S$-module $M$ and $\p \in \proj(S)$, $M_{(\p)} \neq 0$ iff the zeroth homogeneous component of $M_{(\p)}$ is non-zero, since the complement of $\p$ in $S$ always contains a homogeneous element of degree one. Hence, we get $\Supp_S(M) = \Supp_Y(\widetilde{M})$.
    \end{rmk}
    
    We denote by $\C$ an abelian category and recall that $\starmods$ and $Coh(X)$ are special cases of $\C$.

    \subsection{Exact subcategories and their derived categories}
        
    \begin{defn}
    \begin{enumerate}
        \item A full subcategory $\L \subseteq \mathscr C$ is called a \emph{Serre subcategory} of $\mathscr C$ if for any short exact $0 \to M' \to M \to M'' \to 0$  in $\mathscr C$, $M \in \L$ if and only if $M', M'' \in \L$.
        \item A full subcategory $\T \subseteq \mathscr C$ is called a \emph{thick subcategory} of $\mathscr C$ if it is closed under direct summands and has the 2-out-of-3 property, that is, for any short exact sequence in $\C$ in which any two of the objects in the short exact sequence lie in $\T$, so does the third.
    \end{enumerate}
        
    \end{defn}
  In this article, we will primarily focus on the following Serre subcategory: 
  \begin{eg}
  Let $\mathscr C$ be $Coh(X)$ (or $\starmods$). For a closed set $V \subseteq X$ (or $V \subseteq \proj(S)$), the full subcategory $$\L_V = \{ M \in \mathscr C \mid \Supp(M) \subseteq V \}$$ of $\mathscr C$ is a Serre subcategory of $\mathscr C$.
  \end{eg}
  Recall that a full subcategory $\mathscr E \subseteq \C$ is said to be an \emph{exact subcategory} of $\C$ if $\mathscr E$ is closed under extensions. All the exact subcategories we consider in this article are closed under kernels of epimorphisms and closed under direct summands (e.g., thick, or Serre subcategories), and so we assume this hypothesis for the exact category $\SE$ for all definitions/notations ahead.
\begin{defn}
Let $\SE$ be an exact subcategory of $\C$.
    \begin{enumerate}
        \item A chain complex $X_\bullet$ over $\SE$ is said to be \emph{acyclic} if for each $n \in \Z$, there is a short exact sequence 
    $$0 \to Z_n \xrightarrow{a_n} X_n \xrightarrow{b_n} Z_{n-1} \to 0$$ in $\SE$ such that $a_{n} \circ b_{n+1} = \partial_{n+1}^{X}$. Note that this definition is intrinsic and equivalent to $X_\bullet$ being acyclic in $\C$ and $Z_n = Z_n(X_\bullet) \in \SE$ for each $n \in \Z$.
    \item Let $X_\bullet,Y_\bullet$ be chain complexes over $\SE$ and $f \colon X_\bullet \to Y_\bullet$ a map of chain complexes. We say $f$ is a \emph{quasi-isomorphism} if $\cone(f)$ is isomorphic to an acyclic chain complex in the homotopy category of $\SE$.
    \end{enumerate}
\end{defn}

\begin{notationdefinition}
    For $\SE \subseteq \mathscr C$ an exact subcategory, and $\L \subseteq \mathscr C$ a Serre subcategory, denote
    \begin{enumerate}[a)]
            \item $Ch^{b}(\SE)$ (resp. $Ch^{+}(\SE)$): the category of bounded (resp. bounded below) chain complexes over $\SE$.
            \item $Ch^{b}_{\tiny \L}(\SE)$ (resp. $Ch^{+}_{\tiny \L}(\SE)$): the full subcategory of $Ch^{b}(\SE)$ (resp. $Ch^{+}(\SE)$) consisting of chain complexes with all their homologies in $\L$.
            \item $D^b(\SE) = Ch^b(\SE)[\text{Qis}^{-1}]$ : the bounded derived category in which quasi-isomorphisms in $\SE$ are inverted.
            \item $D^b_{\tiny \L}(\SE)$: the full subcategory of $D^b(\SE)$ consisting of chain complexes, with all their homologies in $\L$.
            \end{enumerate}
             For $X_\bullet \in Ch^{+}(\SE)$, 
    \begin{enumerate}[resume,label={(\alph*)}]
            \item $\min_{c}(X_{\bullet}) = \sup\{ n \mid X_i = 0 \text{ for all } i <n  \}$
            \item $\max_{c}(X_{\bullet}) = \inf\{ n \mid X_i = 0 \text{ for all } i >n  \}$
             \item $\min(X_\bullet) = \sup\{ n \mid H_i(X_{\bullet}) = 0 \text{ for all } i <n  \}$ 
            \item $\supph(X_{\bullet}) = \{ n \mid H_n(X_{\bullet}) \neq 0 \}$
            \item $\width(X_\bullet) =  \sup\{ i-j \mid H_i(X_\bullet), H_j(X_\bullet) \neq 0 \}$ if $X_\bullet$ is not acyclic and $\width(X_\bullet) = 0$ if $X_\bullet$ is acyclic
            \item $\Sigma X_\bullet$ denotes the shift of $X_\bullet$, that is, $(\Sigma X_\bullet)_k = X_{k+1}$ 
        \end{enumerate}
\end{notationdefinition}

\begin{rmk}\label{rmk:fully-faithful-thicksub-in-abelian}
    \begin{enumerate}
    \item  For a morphism $f \colon X_\bullet \to Y_\bullet$ in $Ch^{+}(\SE)$, $f$ is a quasi-isomorphism in $Ch^{+}(\SE)$ if and only if $f$ is a quasi-isomorphism in $Ch^{+}(\C)$.
    \item By \cite[Lemma 4.1.16]{HenningKrause}, $D^b(\SE)$ is fully faithful in $D(\SE)$. 
        \item Suppose $\C$ is an abelian category with enough projectives and $\SE$ is an exact subcategory containing the all projective objects of $\C$. Then the natural functor $D^b(\SE) \to D^b(\C)$ is fully faithful. This can be checked, for example, using \cite[Proposition 4.2.15]{HenningKrause} by showing the cofinality condition for the inclusion $\SE \to \C$.
        \item It is also easy to check that $Ch^b_{\tiny \L}(\SE)[\text{Qis}^{-1}] \simeq D^b_{\tiny \L}(\SE)$. Interested readers may refer to \cite[Lemma 1.2.5]{HenningKrause} for a proof.
    \end{enumerate}
\end{rmk}
    \subsection{Graded rings and Koszul complexes}\label{sec:graded-rings-and-koszul-complexes}
       Recall that for a graded module $M$, the graded module $M(n)$ is the same underlying module $M$, but with grading shifted by $n$, i.e., $M(n)_k = M_{k+n}$. For graded $S$-modules $M$ and $N$, recall that the tensor product $M \otimes_S N$ is also a graded $S$-module, with grading given by $(M \otimes_S N)_n = \left\{ \sum_{i} m_i \otimes n_i \mid \deg(m_i)+ \deg(n_i) = n \text{\ for\ all\ } i \right\}$.      
    \par Let $\starf(S)$ denote the full subcategory of $\starmods$ consisting of graded $S$-modules isomorphic to finite direct sums of $S(n)$ where $n \in \Z$. Then $\starf(S)$ is an exact subcategory in $\starmods$. Also, $S(n) \otimes_S S(m) \cong S(m+n)$ for all $m,n \in \Z$ and hence $\starf(S)$ is closed under taking tensor products. Let $\starp(S)$ denote the full subcategory of projective objects in $\starmods$.
    \par Recall that for a commutative ring $R$, $\modr$ denotes the category of finitely generated $R$-modules and morphisms in $\modr$ are $R$-linear maps between $R$-modules and $\mathscr P(R)$ denote the full subcategory of projective $R$-modules in $\modr$.

    \begin{rmk}\label{rmk:mods-has-enough-shifts-of-S}
    \begin{enumerate}[(a)]
        \item Given $P \in \starmods$, $P \in \starp(S)$ if and only if $P \in \mathscr P(S)$. 
        \item Hence (or directly) $\starf(S)$ is a full subcategory of $\starp(S)$.
        \item For any $M \in \starmods$, there exists a surjection $\Oplus_{\text{finite}}S(n_i) \to M$ for some integers $n_i$. Since $S(n_i)$ are projective objects in $\starmods$, the category $\starmods$ has enough projectives.
        \item As a consequence, for $M \in \starmods$, graded projective dimension of $M$ is equal to the (non-graded) projective dimension of $M$. We denote this number by $\pd_S(M)$.
    \end{enumerate}
    \end{rmk}
          \par For finitely generated graded $S$-modules $M$ and $N$, $\Hom_S(M,N)$ has a grading given by $\Hom_S(M,N)_n = \left\{ f \in \Hom_S(M,N) \mid f(M_i) \subseteq N_{i+n}, \text{\ for\ all\ } i \right\}$. Therefore, $\Hom_S(M(-i),N(-j)) \cong \Hom_S(M,N)(i-j)$. Similarly, $\Ext^i_S(M,N)$ is a graded $S$-module. More properties on the category $\starmods$ can be found in \cite{Fossum-Foxby74}.
    
    \par Using the same proof as in \cite[Lemma 5.7.2]{weibelhomological} with the resolutions chosen over $\starf(S)$, we get a Cartan-Eilenberg resolution of any chain complex in $Ch(\starmods)$ with each object in the resolution lying in $\starf(S)$. By taking its total complex, we get the following result.
    \begin{lemma}\label{lem:Cartan-Eilenberg}
        Suppose $X_\bullet \in Ch^{+}(\starmods)$. Then there exists $P_\bullet \in Ch^{+}(\starf(S))$ and a morphism $\pi \colon P_\bullet \to X_\bullet$ in $Ch^{+}(\starmods)$ such that $\pi$ is a quasi-isomorphism.
    \end{lemma}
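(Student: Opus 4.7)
The plan is to mimic the classical Cartan--Eilenberg construction (as in \cite[Lemma 5.7.2]{weibelhomological}) entirely inside $\starmods$, being careful that every projective used is drawn from $\starf(S)$ rather than from the larger $\starp(S)$. The two ingredients that make this work are already in place: by \Cref{rmk:mods-has-enough-shifts-of-S}, every $M \in \starmods$ admits a surjection $\Oplus_{\text{finite}} S(n_i) \twoheadrightarrow M$ with source in $\starf(S)$, and since $S$ is noetherian the kernel of any such map is again in $\starmods$, so the process can be iterated; moreover $\starf(S)$ is closed under finite direct sums, which is exactly what the graded horseshoe lemma requires.

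Concretely, I would first choose $\starf(S)$-surjections onto each boundary $B_q := \im\,\partial_{q+1}^X$ and each homology $H_q(X_\bullet)$, then apply the graded horseshoe lemma to the short exact sequences $0 \to B_q \to Z_q \to H_q(X_\bullet) \to 0$ and $0 \to Z_q \to X_q \to B_{q-1} \to 0$ to obtain compatible resolutions of $B_q$, $Z_q$, $H_q(X_\bullet)$ and $X_q$ whose terms all lie in $\starf(S)$. These assemble into a first-quadrant double complex $P_{\bullet\bullet}$ with $P_{p,q} \in \starf(S)$ such that each column $P_{\bullet,q}$ projectively resolves $X_q$. Setting $m := \min_c(X_\bullet)$ (which we may assume finite, else $X_\bullet = 0$ and the statement is trivial), one arranges $P_{p,q} = 0$ for $q < m$, since $X_q$, $Z_q$, $B_q$ and $H_q(X_\bullet)$ all vanish in that range. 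Taking $P_\bullet := \mathrm{Tot}(P_{\bullet\bullet})$, in degree $n$ the direct sum runs over $0 \le p \le n - m$, hence is finite and lies in $\starf(S)$; further $P_n = 0$ for $n < m$, while $P_m = P_{0,m}$ surjects onto $X_m \neq 0$, giving $\min_c(P_\bullet) = m = \min_c(X_\bullet)$. The augmentation $P_{0,\bullet} \to X_\bullet$ extends to the desired morphism $\pi \colon P_\bullet \to X_\bullet$, which is a quasi-isomorphism by the standard bounded-below double-complex spectral sequence argument.

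The only genuinely delicate point in the whole plan is verifying, at each step of the horseshoe iteration, that the resolving modules can be kept inside $\starf(S)$ rather than merely inside $\starp(S)$; this is where I would spend most of the care, and it is precisely what \Cref{rmk:mods-has-enough-shifts-of-S} and the closure of $\starf(S)$ under finite direct sums are designed to handle. Once that bookkeeping is in place, the remainder of the construction, including the compatibility of $\min_c$, is formally parallel to the classical ungraded case.
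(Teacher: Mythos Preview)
Your proposal is correct and follows exactly the approach the paper takes: the paper simply cites \cite[Lemma 5.7.2]{weibelhomological} and remarks that one should choose all resolutions inside $\starf(S)$, then take the total complex. You have spelled out precisely this argument, including the horseshoe step, the finiteness of the direct sums in the totalization, and the verification of $\min_c(P_\bullet) = \min_c(X_\bullet)$, so there is nothing to add.
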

    
    Let $\tildef= f_1,\dots,f_d \in S$ be a set of homogeneous elements with $\deg(f_i) = n_i$, $\tildef^m = f_1^m,\dots,f_d^m$ and $\stark_{\bullet}(\tildef;S) \in Ch^b(\starf(S))$ the graded Koszul complex on $\tildef$ generated by $e_1 , e_2, \ldots , e_d$  with degrees $n_1, n_2, \ldots , n_d$ respectively. Explicitly
    $$\stark_j(\tildef;S) = \bigoplus_{1 \leq i_1 < \cdots < i_j \leq d} S e_{i_1} \wedge \cdots \wedge e_{i_j}$$ with $\deg(e_{i_1} \wedge \cdots \wedge e_{i_j}) = n_{i_1}+ \cdots +n_{i_j}$ and the usual differentials : $$\partial^{\stark(\tildef;S)}_j(e_{i_1} \wedge \cdots \wedge e_{i_j}) = \sum_{k = 1}^j (-1)^{k+1} f_k e_{i_1} \wedge \cdots \wedge  \widehat{e_{i_k}} \wedge \cdots \wedge e_{i_j}.$$
    Note that each differential in $\stark_{\bullet}(\tildef;S)$ is of degree zero.  For each $M \in \starmods$, define $\stark_\bullet(\tildef;M) \coloneqq \stark_{\bullet}(\tildef;S) \otimes_S M \in Ch^b(\starmods)$. The chain complex obtained by forgetting the grading on $\stark_{\bullet}(\tildef;M)$ is denoted by $K_{\bullet}(\tildef;M)$. The cochain complex $\Hom_S(K_{\bullet}(\tildef;R),M)$ is denoted by $K^{\bullet}(\tildef;M)$.
    \par Given $n \geq m$, we have a graded chain complex map $\kappa^{n,m} \colon \stark_{\bullet}(\tildef^n;S) \to \stark_{\bullet}(\tildef^m;S)$ mapping $e_{i_1} \wedge \cdots \wedge e_{i_j}$ to $(f_{i_1} \cdots f_{i_j})^{n-m} \tilde{e_{i_1}} \wedge \cdots \wedge \tilde{e_{i_j}}$, where $e_1, e_2, \ldots , e_d$ are generators of $\stark_{\bullet}(\tildef^n;S)$ and $\tilde{e_1}, \tilde{e_2}, \ldots , \tilde{e_d}$ are generators of $\stark_{\bullet}(\tildef^m;S)$. Note that this is a degree $0$ map. When $n$ and $m$ are clear from the context, for ease of notation, we denote $\kappa^{n,m}$ by $\kappa$.

    \subsection{Extension and support of coherent sheaves}
  Throughout this subsection, we use the following notation: $Y$ is a noetherian scheme, $X \subseteq Y$ an open subset, and $\iota \colon X \to Y$ the inclusion map. Note that for a coherent sheaf $\F$ over $X$, $\iota_* \F$ is a quasi-coherent sheaf on $Y$. 
\begin{lemma}\label{lem:Hartshorne-extension-of-sheaves}
   Suppose $\F \in Coh(X)$, $h \colon \mathcal Q \to \iota_* \F$ is a morphism of quasi-coherent sheaves on $Y$ with $\mathcal Q \in Coh(Y)$. Then there exists a coherent sheaf $\F'  \subseteq \iota_* \F$ on $Y$ such that $\im(h) \subseteq \F'$, and $\F'|_{X} = \F$.
\end{lemma}
\begin{proof}
    It follows from \cite[Chapter 2, Exercise 5.15]{Hartshorne} that there exists $\F'' \in Coh(Y)$ such that $\F'' \subseteq \iota_* \F$ and $\F''|_{X} = \F$. Let $\F'$ be the sum of subsheaves $\F''$ and $\im(h)$ of $\iota_* \F$. Since $\mathcal Q$ is coherent on $Y$, so is $\im(h)$ and hence $\F' \in Coh(Y)$. Note that $\F'' \subseteq \F' \subseteq \iota_* \F$ and $\F''|_{X} = (\iota_*\F)|_{X}= \F$. Therefore, $\F'|_{X} = \F$. This completes the proof.
\end{proof}

\begin{lemma}\label{lem:extension-ses-sheaves}
    Consider the short exact sequence of coherent sheaves on $X$ $$\CW_\bullet \coloneqq \quad 0 \to \CW_1 \xrightarrow{f} \CW_2 \xrightarrow{g} \CW_3 \to 0$$ with $\CW_1' \subseteq \iota_*\CW_1$ a coherent sheaf on $Y$ whose restriction to $X$ is $\CW_1$. Then there exists a short exact sequence 
    $$\CW'_\bullet \coloneqq \quad 0 \to \CW_1' \xrightarrow{f'} \CW_2' \xrightarrow{g'} \CW_3' \to 0$$ of coherent sheaves on $Y$ such that $\CW_\bullet'$ is a subcomplex of $\iota_*\CW_\bullet$ and ${\CW_\bullet'}|_{X} = \CW_\bullet$.
\end{lemma}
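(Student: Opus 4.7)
The plan is to apply the extension Lemma~\ref{lem:Hartshorne-extension-of-sheaves} to produce a coherent subsheaf of $\iota_*\CW_3$ restricting correctly on $X$, lift it into $\iota_*\CW_2$ by a Noetherian argument, and then enlarge this lift by $\iota_*f(\CW_1')$ to obtain the required subcomplex.

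First I would apply Lemma~\ref{lem:Hartshorne-extension-of-sheaves} to produce a coherent subsheaf $\tilde{\CW_3} \subseteq \iota_*\CW_3$ with $\tilde{\CW_3}|_X = \CW_3$; by replacing $\tilde{\CW_3}$ with $\tilde{\CW_3}\cap \iota_*g(\iota_*\CW_2)$ if necessary (which is coherent as an intersection of a coherent with a quasi-coherent subsheaf on a Noetherian scheme, and still restricts to $\CW_3$), I may assume $\tilde{\CW_3}$ lies inside the image of $\iota_*g$. Then $(\iota_*g)^{-1}(\tilde{\CW_3}) \subseteq \iota_*\CW_2$ is quasi-coherent and restricts to $\CW_2$ on $X$; writing it as the filtered union of its coherent subsheaves, Noetherianness of $\tilde{\CW_3}$ yields a coherent $\CW_2^{\flat} \subseteq (\iota_*g)^{-1}(\tilde{\CW_3})$ with $\iota_*g(\CW_2^{\flat}) = \tilde{\CW_3}$, and (by enlarging with a Hartshorne extension of $\CW_2$ if needed) with $\CW_2^{\flat}|_X = \CW_2$. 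Setting $\CW_2' := \CW_2^{\flat} + \iota_*f(\CW_1')$ and $\CW_3' := \tilde{\CW_3}$, and taking $f' := \iota_*f|_{\CW_1'}$, $g' := \iota_*g|_{\CW_2'}$, one obtains a complex $\CW'_{\bullet}$ of coherent sheaves embedded in $\iota_*\CW_{\bullet}$ as a subcomplex and restricting to $\CW_{\bullet}$ on $X$.

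The hard part will be verifying exactness at the middle term, namely $\ker(g') = \iota_*f(\CW_1')$. Since $\iota_*$ is left exact, $\ker(\iota_*g) = \iota_*f(\iota_*\CW_1)$, so $\ker(g') = \CW_2' \cap \iota_*f(\iota_*\CW_1)$; by the modular law (using $\iota_*f(\CW_1')\subseteq \iota_*f(\iota_*\CW_1)$) this equals $(\CW_2^{\flat} \cap \iota_*f(\iota_*\CW_1)) + \iota_*f(\CW_1')$, which a priori may exceed $\iota_*f(\CW_1')$. The discrepancy $(\CW_2^{\flat} \cap \iota_*f(\iota_*\CW_1))/(\CW_2^{\flat} \cap \iota_*f(\CW_1'))$ is a coherent sheaf whose restriction to $X$ is the zero quotient $\CW_1/\CW_1 = 0$, hence it is supported on the closed complement $Y \setminus X$. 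I would dispose of it by a final shrinking: since this excess lies in $\ker(\iota_*g)$, one may replace $\CW_2^{\flat}$ by a coherent subsheaf from which this excess has been removed, without affecting either $\iota_*g(\CW_2^{\flat}) = \tilde{\CW_3}$ or the restriction $\CW_2^{\flat}|_X = \CW_2$ (since the excess vanishes on $X$). With this refined choice of $\CW_2^{\flat}$, the resulting triple $0 \to \CW_1' \xrightarrow{f'} \CW_2' \xrightarrow{g'} \CW_3' \to 0$ is short exact and satisfies all the required properties.
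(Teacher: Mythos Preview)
Your route differs from the paper's. The paper picks any coherent extension $\CW_2'' \subseteq \iota_*\CW_2$ of $\CW_2$, sets $\CW_2' = \CW_2'' + \im(\iota_*f|_{\CW_1'})$, and then \emph{defines} $(\CW_3',g')$ to be the cokernel of $f'\colon \CW_1' \to \CW_2'$; exactness is tautological, and the restriction property follows because $\iota^*$ is exact. You instead commit to $\CW_3' = \tilde{\CW_3} \subseteq \iota_*\CW_3$ at the outset and must then manufacture exactness at $\CW_2'$ by hand --- this is where the difficulty you correctly identify arises.

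There is a genuine gap in your final step. You assert that $\CW_2^\flat$ can be replaced by a coherent subsheaf ``from which the excess has been removed'' while keeping both $\iota_*g(\CW_2^\flat) = \tilde{\CW_3}$ and $\CW_2^\flat|_X = \CW_2$, on the grounds that the excess is supported on $Y\setminus X$. But the excess is a \emph{quotient} of $K^\flat := \CW_2^\flat \cap \iota_*f(\iota_*\CW_1)$, not a subsheaf of $\CW_2^\flat$; ``removing'' it amounts to finding a coherent subsheaf of $\CW_2^\flat$ that still surjects onto $\tilde{\CW_3}$ but meets $\iota_*\CW_1$ only inside $\iota_*f(\CW_1')$. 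That is a splitting-type statement for the sequence $0 \to K^\flat \to \CW_2^\flat \to \tilde{\CW_3} \to 0$, and support considerations alone do not produce it. One can salvage the idea via Artin--Rees: if $\I_Z$ is the ideal sheaf of $Z=Y\setminus X$, then for $n\gg 0$ one has $\I_Z^{\,n}\CW_2^\flat \cap \iota_*\CW_1 \subseteq \I_Z^{\,n-c}K^\flat \subseteq \iota_*f(\CW_1')$, and $(\I_Z^{\,n}\CW_2^\flat)|_X=\CW_2$; however the image under $\iota_*g$ then shrinks to $\I_Z^{\,n}\tilde{\CW_3}$, so your plan of holding $\tilde{\CW_3}$ fixed does not survive. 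The paper's cokernel construction avoids this whole manoeuvre.
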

\begin{proof}
     We have $\CW_2 \in Coh(X)$ and $\iota_* f |_{\CW_1'} \colon \CW_1' \to \iota_* \CW_2$ a morphism of quasi-coherent sheaves on $Y$ with $\CW_1' \in Coh(Y)$. By applying \Cref{lem:Hartshorne-extension-of-sheaves}, there exists a coherent sheaf $\CW_2' \subseteq \iota_*\CW_2$ such that ${\CW_2'|}_{X} = \CW_2$ and $\im \left( \iota_* f |_{\CW_1'} \right) \subseteq \CW_2'$. 
     Since $\iota_* f$ is injective, its restriction $f' \colon \CW_1' \to \CW_2'$ is also injective. Let $(\CW_3',g')$ be the cokernel of $f'$. Since the restriction of sheaves from $Y$ to $X$ is an exact functor, it is easy to see that ${\CW_\bullet'}|_{X} = \CW_\bullet$. 
\end{proof}

For a topological space $Y$ and $A \subseteq Y$, the smallest closed set containing $A$ in $Y$ is denoted by $\overline{A}^Y$.
\par The following lemma establishes the relation between the support of a coherent sheaf on an open set and the support of its coherent extension to the ambient scheme.
\begin{lemma}\label{lem:support-under-extn}
    Let $\CW$ be a coherent sheaf on $X$ and $\CW' \subseteq \iota_*\CW$ a coherent sheaf on $Y$ such that $\CW'|_{X} = \CW$, then $\Supp_Y(\CW') = \overline{\Supp_X(\CW)}^Y$.
\end{lemma}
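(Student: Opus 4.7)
The plan is to prove the set-theoretic equality by establishing each inclusion separately, using that $\Supp_Y(\CW')$ is automatically closed in $Y$ (since $\CW'$ is coherent on a noetherian scheme).

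First, for the inclusion $\overline{\Supp_X(\CW)}^Y \subseteq \Supp_Y(\CW')$, I would observe that restriction preserves stalks at points of $X$: for any $x \in X$, $(\CW')_x = (\CW'|_X)_x = \CW_x$. Hence $\Supp_X(\CW) \subseteq \Supp_Y(\CW')$ as subsets of $Y$, and since the right-hand side is closed, we can take closures to get $\overline{\Supp_X(\CW)}^Y \subseteq \Supp_Y(\CW')$.

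For the reverse inclusion $\Supp_Y(\CW') \subseteq \overline{\Supp_X(\CW)}^Y$, I would argue by contradiction. Suppose $y \in \Supp_Y(\CW')$ but $y \notin \overline{\Supp_X(\CW)}^Y$. Then there exists an open neighborhood $U \subseteq Y$ of $y$ disjoint from $\Supp_X(\CW)$, so $\CW|_{U \cap X} = 0$. For any open $V \subseteq U$, the pushforward evaluates as $(\iota_*\CW)(V) = \CW(V \cap X)$, and since $V \cap X \subseteq U \cap X$ is an open subset of $X$ where $\CW$ vanishes, we get $(\iota_*\CW)(V) = 0$. Therefore $(\iota_*\CW)|_U = 0$, and since $\CW' \subseteq \iota_*\CW$ is a subsheaf, we also have $\CW'|_U = 0$, forcing $(\CW')_y = 0$ — contradicting $y \in \Supp_Y(\CW')$.

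No step here strikes me as a serious obstacle; the only subtlety is remembering that $\CW'|_X = \CW$ really refers to the restriction of $\CW'$ viewed as a subsheaf of $\iota_*\CW$, so that the stalk identification at points of $X$ is literally an equality (not merely an isomorphism up to the adjunction unit). The argument is then essentially a direct manipulation of the defining property of stalks and the pushforward, together with the fact that coherent sheaves on noetherian schemes have closed supports.
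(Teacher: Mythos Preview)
Your proof is correct and follows essentially the same approach as the paper's: both establish $\overline{\Supp_X(\CW)}^Y \subseteq \Supp_Y(\CW')$ via closedness of coherent support, and both prove the reverse inclusion by picking an open $U$ disjoint from $\Supp_X(\CW)$ and observing that $\iota_*\CW$ (hence its subsheaf $\CW'$) vanishes there. The only difference is cosmetic: you argue by contradiction and check all opens $V \subseteq U$ explicitly, whereas the paper argues the contrapositive and checks sections over $U$ only (leaving the passage to stalks implicit).
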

\begin{proof}
    Clearly $\Supp_X(\CW) \subseteq \Supp_Y(\CW')$. Since $\CW'$ is coherent, its support is closed in $Y$. Thus $\overline{\Supp_X(\CW)}^Y \subseteq \Supp_Y(\CW')$. To show the other containment, suppose $x \notin \overline{\Supp_X(\CW)}^Y$. Then there exists an open set $U \subseteq Y$ such that $x \in U$ and $U \cap \overline{\Supp_X(\CW)}^Y = \emptyset$. Now $\iota_*(\CW)(U) = \CW(U \cap X) = 0$. Since $\CW'$ is a subsheaf of $\iota_*(\CW)$, it follows that $\CW'(U) = 0$ and hence $x \notin \Supp_Y(\CW')$.
\end{proof}
\begin{lemma}\label{lem:extension-of-sheaves}
Let $Y$ be a noetherian scheme and $X \subseteq Y$ an open subset. Let $\F_\bullet$ be a bounded chain complex of coherent sheaves on $X$. Then there exists a bounded chain complex $\G_\bullet$ of coherent sheaves on $Y$ such that $\G_{\bullet}|_{X} = \F_\bullet$ and the closure of $\Supp_{X}(\H_i(\F_\bullet))$ in $Y$ is equal to $\Supp_Y(\H_i(\G_\bullet))$ for each $i$.
\end{lemma}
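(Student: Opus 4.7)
The plan is to construct $\G_\bullet$ recursively from top to bottom using \Cref{lem:extension-ses-sheaves}, arranging the construction so that the natural map $\H_i(\G_\bullet) \to \iota_*\H_i(\F_\bullet)$ is injective; \Cref{lem:support-under-extn} will then yield the required equality of supports.

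Say $\F_i = 0$ for $i \notin [m,n]$, and write $Z_i = \ker\partial_i^\F$, $B_i = \im\partial_{i+1}^\F$. First, use \Cref{lem:Hartshorne-extension-of-sheaves} to pick a coherent extension $\G_n \subseteq \iota_*\F_n$ of $\F_n$. Then, recursively for $i = n-1,\dots,m$, given $\G_{i+1}$, form the coherent subsheaf $\im\partial_{i+1}^\G \coloneqq (\iota_*\partial_{i+1})(\G_{i+1}) \subseteq \iota_*\F_i$, which restricts to $B_i$ on $X$ and lies in $\iota_*B_i$ by left-exactness of $\iota_*$. Apply \Cref{lem:extension-ses-sheaves} to the short exact sequence $0 \to B_i \to \F_i \to \F_i/B_i \to 0$ on $X$ with $\CW_1' = \im\partial_{i+1}^\G$ to obtain a coherent sheaf $\G_i \subseteq \iota_*\F_i$ fitting into a short exact sequence $0 \to \im\partial_{i+1}^\G \to \G_i \to \G_i/\im\partial_{i+1}^\G \to 0$ that forms a subcomplex of the corresponding pushforward. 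Setting $\partial_i^\G \coloneqq (\iota_*\partial_i)|_{\G_i}$ then yields a chain complex $\G_\bullet$ with the required restriction $\G_\bullet|_X = \F_\bullet$; the differential lands in $\im\partial_i^\G \subseteq \G_{i-1}$ by construction, and $\partial^2 = 0$ follows from functoriality of $\iota_*$.

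The containment $\Supp_Y(\H_i(\G_\bullet)) \supseteq \overline{\Supp_X(\H_i(\F_\bullet))}^Y$ is immediate, using $\H_i(\G_\bullet)|_X = \H_i(\F_\bullet)$ and the closedness of supports of coherent sheaves. For the reverse, the subcomplex property from \Cref{lem:extension-ses-sheaves} forces $\G_i/\im\partial_{i+1}^\G \hookrightarrow \iota_*(\F_i/B_i)$; combined with $\iota_*B_i = \ker(\iota_*\F_i \to \iota_*(\F_i/B_i))$ (left-exactness again), this gives $\G_i \cap \iota_*B_i = \im\partial_{i+1}^\G$. Using $\ker\partial_i^\G = \G_i \cap \iota_* Z_i$ (left-exactness applied to $\partial_i$) and $B_i \subseteq Z_i$, one then obtains $\ker\partial_i^\G \cap \iota_*B_i = \im\partial_{i+1}^\G$. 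Hence the canonical map
\[ \H_i(\G_\bullet) = \ker\partial_i^\G / \im\partial_{i+1}^\G \;\hookrightarrow\; \iota_* Z_i / \iota_* B_i \;\hookrightarrow\; \iota_*(Z_i/B_i) = \iota_*\H_i(\F_\bullet) \]
is injective, exhibiting $\H_i(\G_\bullet)$ as a coherent subsheaf of $\iota_*\H_i(\F_\bullet)$ whose restriction to $X$ equals $\H_i(\F_\bullet)$, so \Cref{lem:support-under-extn} delivers the desired equality. The crucial point (and the place where a naive term-by-term extension would fail) is the cleanness of the short-exact-sequence-based construction, which enforces $\G_i \cap \iota_*B_i = \im\partial_{i+1}^\G$ and thereby rules out spurious homology supported on $Y \setminus X$.
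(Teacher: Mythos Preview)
Your proof is correct and follows the same overall strategy as the paper's: a top-to-bottom recursive extension via \Cref{lem:extension-ses-sheaves}, followed by an appeal to \Cref{lem:support-under-extn} once one knows that $\H_i(\G_\bullet)$ sits as a coherent subsheaf of $\iota_*\H_i(\F_\bullet)$. The difference lies in the decomposition used. The paper splits each degree into \emph{two} short exact sequences, $0 \to \CB_i \to \CZ_i \to \H_i \to 0$ and $0 \to \CZ_i \to \F_i \to \CB_{i-1} \to 0$, and lifts both; this makes $\CZ_i'$, $\CB_i'$, and $\H_i'$ explicit coherent subsheaves of their pushforwards, so that $\H_i(\G_\bullet) = \H_i' \subseteq \iota_*\H_i$ is immediate from the construction. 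You instead lift only the single sequence $0 \to B_i \to \F_i \to \F_i/B_i \to 0$, which is more economical in the construction but then forces the extra intersection computation $\G_i \cap \iota_*B_i = \im\partial_{i+1}^\G$ (via the subcomplex clause of \Cref{lem:extension-ses-sheaves} and left-exactness of $\iota_*$) to recover the injection on homology. Both routes are equally valid; the paper's is a little more transparent about why the homology extends cleanly, while yours avoids introducing the auxiliary objects $\CZ_i'$ and $\H_i'$.
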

\begin{proof}
    Let $\F_\bullet \coloneqq 0 \to \F_m \xrightarrow{\phi_m} \F_{m-1} \to \cdots \to \F_{k+1} \xrightarrow{\phi_{k+1}} \F_k \to 0$ for some $m, k \in \mathbb Z$ where each $\F_i$ is a coherent sheaf on $X$. Let us denote the sheaves $\CZ_i \coloneqq \ker(\phi_i \colon \F_i \to \F_{i-1})$, $\CB_i \coloneqq \im(\phi_{i+1} \colon \F_{i+1} \to \F_i)$ and $\H_i \coloneqq \CZ_i/\CB_i$. Since $X$ is also noetherian, $\CZ_i, \CB_i$ and $\H_i$ are coherent sheaves on $X$. We break $\F_\bullet$ into short exact sequences
    \begin{equation}\label{eqn:ses1}
        0 \to \CB_i \to \CZ_i \to \H_i \to 0 \tag{$A_i$}
    \end{equation} 
    \begin{equation}\label{eqn:ses2}
        0 \to \CZ_i \to \F_i \to \CB_{i-1} \to 0 \tag{$B_i$}
    \end{equation}
    \par In order to lift $\F_\bullet$ to a chain complex $\G_\bullet$ such that $\G_{\bullet}|_{X} = \F_\bullet$, it is sufficient to lift the short exact sequences $A_i$ and $B_i$ to short exact sequences $A_i'$ and $B_i'$ respectively, of coherent sheaves on $Y$ such that $A'_{i}|_X = A_i$ and $B'_{i}|_X = B_i$ for each $i$. Moreover we will construct $A_i' \subseteq \iota_*A_i$ and $B_i' \subseteq \iota_*B_i$ where $\iota\colon X \to Y$ is the inclusion map. Then we will show that this yields the closure of $\Supp_{X}(\H_i(\F_\bullet))$ in $Y$ is equal to $\Supp_Y(\H_i(\G_\bullet))$.
    \par By (reverse) induction on $i$, we lift $A_i$ and $B_i$. The base case is when $i = m$. Consider the short exact sequence $A_m$. Here, $\CB_m = 0$ and hence $\iota_* \CB_m = 0$ is coherent on $Y$. Applying \Cref{lem:extension-ses-sheaves} to $A_m$, we get a short exact sequence $A_m' \subseteq \iota_*A_m$ on $Y$, $$A_m' \coloneqq \quad 0 \to 0 \to \CZ_m' \to \H_m' \to 0$$ whose restriction to $X$ is $A_m$. Now consider the short exact sequence $B_m$. We already have a lift of $\CZ_m$ from $A_m$. Again applying \Cref{lem:extension-ses-sheaves} for $B_m$, we get the exact sequence $B_m' \subseteq \iota_*B_m$ on $Y$, $$B_m' \coloneqq \quad 0 \to \CZ_m' \to \F_m' \to \CB_{m-1}' \to 0$$ such that ${B_m'|}_{X} = B_m$.
    \par Let us assume that for all $j>i$, we have short exact sequences $A_j' \subseteq \iota_*A_j$ and $B_j' \subseteq \iota_*B_j$ of coherent sheaves on $Y$ such that ${A_j'|}_{X} = A_j$ and ${B_j'|}_{X} = B_j$. Consider the short exact sequence $A_i$. Since $B_{i+1}$ is already lifted to $B_{i+1}'$, we have a coherent sheaf $\CB_i' \subseteq \iota_*\CB_i$ whose restriction to $X$ is $\CB_i$. Now apply \Cref{lem:extension-ses-sheaves} to get $A_i'$ as required. From $A_i'$, we get $\CZ_i' \subseteq \iota_*\CZ_i$ such that ${\CZ_i'|}_{X} = \CZ_i$. Again applying \Cref{lem:extension-ses-sheaves}, we get $B_i'$ as required. Hence, the induction step is complete. Now the short exact sequences $A_i'$ and $B_i'$ together yield a chain complex $\G_\bullet \coloneqq \F_\bullet'$ whose restriction to $X$ is $\F_\bullet$.
    \par Since $\H_i(\G_\bullet) = \H_i' \subseteq \iota_*\H_i(\F_\bullet)$ by construction, and ${\H_i(\G_\bullet)|}_{X} = \H_i(\F_\bullet)$, using \Cref{lem:support-under-extn}, we get that the closure of $\Supp_{X}(\H_i(\F_\bullet))$ in $Y$ is equal to $\Supp_Y(\H_i(\G_\bullet))$ for each $i$.
\end{proof}
 The next lemma compares the support of a chain complex of coherent sheaves and the support upon applying the functor $\Gamma_*(-) \colon Coh(Y) \to \starmods$ for a projective scheme $Y$. This is a direct consequence of \cite[Chapter 2, Proposition 5.15]{Hartshorne}.
\begin{lemma}\label{lem:del-comp-gamma-is-identity}
    Let $S = \Oplus_{i \geq 0} S_i$ be a graded ring which is finitely generated by $S_1$ as an $S_0$-algebra, $Y = \proj(S)$ and $\F_\bullet \in Ch^b(Coh(Y))$. Then $$\Supp_Y(\H_i(\F_\bullet)) = \Supp_S(H_i(\Gamma_*(\F_\bullet))).$$
\end{lemma}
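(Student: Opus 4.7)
The plan is to produce a natural isomorphism of sheaves $\H_i(\F_\bullet) \cong \widetilde{H_i(\Gamma_*(\F_\bullet))}$ on $Y$, after which the equality of supports is immediate. The cited Hartshorne result gives, under the hypothesis that $S$ is generated by $S_1$ as an $S_0$-algebra, a natural isomorphism $\widetilde{\Gamma_*(\F)} \xrightarrow{\sim} \F$ for every quasi-coherent (in particular coherent) sheaf $\F$ on $Y = \proj(S)$. So the main task is to propagate this identification through the homology functor.

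First I would apply the Hartshorne isomorphism levelwise to the bounded complex $\F_\bullet \in Ch^b(Coh(Y))$. Since the comparison morphism $\widetilde{\Gamma_*(-)} \to (-)$ is natural in the sheaf, it is compatible with the differentials, so the levelwise isomorphisms assemble into an isomorphism of chain complexes of coherent sheaves
$$\widetilde{\Gamma_*(\F_\bullet)} \xrightarrow{\sim} \F_\bullet$$
in $Ch^b(Coh(Y))$. In particular, this induces isomorphisms on homology sheaves $\H_i\bigl(\widetilde{\Gamma_*(\F_\bullet)}\bigr) \cong \H_i(\F_\bullet)$ for every $i$.

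Next I would use that the sheafification functor $\widetilde{(-)}$ from graded $S$-modules to quasi-coherent sheaves on $\proj(S)$ is exact. Exactness forces $\widetilde{(-)}$ to commute with the formation of kernels, images, and quotients, hence with homology:
$$\H_i\bigl(\widetilde{\Gamma_*(\F_\bullet)}\bigr) \;\cong\; \widetilde{H_i(\Gamma_*(\F_\bullet))}.$$
Combining the two displayed isomorphisms yields $\H_i(\F_\bullet) \cong \widetilde{H_i(\Gamma_*(\F_\bullet))}$ as sheaves on $Y$. Taking supports (interpreting $\Supp_Y$ of a graded $S$-module as the support of its sheafification, as is standard) gives the claimed equality.

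There is no substantive obstacle here; the lemma is really just the combination of exactness of sheafification with Hartshorne's identification $\widetilde{\Gamma_*(-)} \simeq \mathrm{id}$, and the only point to remember is the convention for the support of a graded module on $\proj(S)$. Bookkeeping is straightforward because $\F_\bullet$ is bounded, so no convergence issue arises when passing homology under the exact functor $\widetilde{(-)}$.
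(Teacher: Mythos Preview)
Your argument is correct and matches the paper's approach: the paper simply states that the lemma is a direct consequence of \cite[Chapter~2, Proposition~5.15]{Hartshorne}, and your proof spells out exactly how---naturality of $\widetilde{\Gamma_*(-)}\xrightarrow{\sim}\mathrm{id}$ gives an isomorphism of complexes, and exactness of $\widetilde{(-)}$ lets homology pass through. Your reading of $\Supp_Y$ on a graded module as the support of its sheafification is also the intended convention here.
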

\section{Graded Tate resolutions, Koszul complexes and local cohomology}\label{sec:graded Tate etc.}
    The following lemma shows the existence of graded chain complex maps, when chain complex maps which are not necessarily graded exist.
     \begin{lemma}\label{rmk:existence-of-graded-map}
     Let $F_\bullet, G_\bullet \in Ch(\starmods)$ and a morphism $f' \colon F_\bullet \to G_\bullet$ in $Ch(\mods)$.  Then there exists a morphism $f \colon F_\bullet \to G_\bullet$ in $Ch(\starmods)$ such that for each $i$ and $x \in F_i$ a homogeneous element of degree $d$, if $f_i'(x)$ is homogeneous of degree $d$, then $f_i(x) = f_i'(x)$. 
    \end{lemma}
    \begin{proof}
        For each $i$ and $x \in F_i$ a homogeneous element of degree $d$, let $f_i(x) = \pi_d^i \circ f_i'(x)$ where $\pi_d^i$ is the projection of $G_i$ to its $d$-th homogeneous component. Extend $f_i$ linearly to $F_i$.  Clearly, $f_i$ is a morphism in $\starmods$ and $f_i(x) = f_i'(x)$ whenever $x$ and $f_i'(x)$ is homogeneous of degree $d$. Note that
    \begin{align*}
    f_{i-1}\partial^F_i(x) & = \pi_d^{i-1} f_{i-1}'
    \partial^F_i(x) & \quad (\text{since } \partial^F_i \text{ is graded of degree } 0 ) \\
    & = \pi_d^{i-1} \partial_i^G f_i'(x) & \quad (\text{since } f_i' \text{ is a chain complex morphism})  \\
    & = \partial_i^G \pi_d^i f_i'(x) & \quad (\text{since } \partial^G_i \text{ is graded of degree } 0) \\
    & = \partial_i^G f_i(x).
    \end{align*}
    This shows that $f_{i-1}\partial^F_i = \partial_i^G f_i$ and hence $f$ is a morphism in $Ch(\starmods)$.
    \end{proof}    
    \par The following lemma provides a graded version of \cite[Lemma 3.1]{SandersSane}. The original statement is an ungraded dual version due to \cite[Proposition 23]{Foxby-Halvorsen}.
   \begin{lemma}\label{lem:graded-foxby}
        Let $I \subseteq S$ be an ideal generated by homogeneous elements $\tildes = s_1,\dots,s_d$, $P_\bullet \in Ch^+(\starf(S))$
        such that $\min(P_\bullet) = 0$ and $H_i(P_\bullet)$ is supported on $V(I)$ for all $i$. Then there exist $n \in \N$ and a graded chain complex map $$\delta \colon \stark_\bullet(\tildes^n;P_0) \to P_\bullet$$ with $\delta_0\colon S \otimes_S P_0 \to P_0$ the natural graded isomorphism. 
    \end{lemma}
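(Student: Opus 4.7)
The plan is to construct $\delta$ inductively in homological degree, choosing $n$ large enough at the outset so that every obstruction to extending $\delta_{j-1}$ to $\delta_j$ vanishes. The two driving ingredients are: (a) a Noetherian pigeonhole giving powers of the ideal $I = (s_1,\ldots,s_d)$ that annihilate the finitely generated homology modules $H_i(P_\bullet)$; and (b) the Artin--Rees lemma, which controls the intersection of $I$-adic filtrations on the $P_i$ with kernels of the differentials.

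First I would pin down $n$. Since each $P_i \in \starf(S)$ is finitely generated and $S$ is noetherian, each $H_i(P_\bullet)$ is a finitely generated graded $S$-module supported on $V(I)$, so there exists $n_i \in \N$ with $I^{n_i} H_i(P_\bullet) = 0$ for every $i = 0,1,\ldots,d-1$. Applying the graded Artin--Rees lemma to $\ker(\partial_i^P) \subseteq P_i$ with respect to $I$ gives $c_i \in \N$ with $I^n P_i \cap \ker(\partial_i^P) \subseteq I^{n-c_i} \ker(\partial_i^P)$ for all $n \geq c_i$. Picking $n \in \N$ with $n \geq n_i + c_i$ for every $i = 0,1,\ldots,d-1$ then gives
\[
I^n P_i \cap \ker(\partial_i^P) \ \subseteq\ I^{n-c_i} \ker(\partial_i^P) \ \subseteq\ \im(\partial_{i+1}^P),
\]
where the second inclusion uses $I^{n-c_i} H_i(P_\bullet) = 0$.

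With such an $n$ fixed, I would build the $\delta_j$ by induction on $j$, taking $\delta_0$ to be the natural graded isomorphism $S \otimes_S P_0 \to P_0$. Given graded chain maps $\delta_0,\ldots,\delta_{j-1}$ satisfying the chain-map identities, for a homogeneous generator $(e_{i_1} \wedge \cdots \wedge e_{i_j}) \otimes p$ of $\stark_j(\tildes^n;P_0)$ one has
\[
\delta_{j-1}\partial_j^K\bigl((e_{i_1} \wedge \cdots \wedge e_{i_j}) \otimes p\bigr) = \sum_{k=1}^j (-1)^{k+1} s_{i_k}^n\, \delta_{j-1}\bigl((e_{i_1} \wedge \cdots \wedge \widehat{e_{i_k}} \wedge \cdots \wedge e_{i_j}) \otimes p\bigr),
\]
which lies in $(s_1^n,\ldots,s_d^n)P_{j-1} \subseteq I^n P_{j-1}$, and in $\ker(\partial_{j-1}^P)$ by the identity $\partial_{j-1}^P \delta_{j-1} = \delta_{j-2}\partial_{j-1}^K$ combined with $\partial^K\partial^K = 0$ (the kernel condition being vacuous for $j = 1$). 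By the previous paragraph this element lies in $\im(\partial_j^P)$; since $\partial_j^P$ is a degree-zero graded map and the element is homogeneous of some degree $\ell$, the degree-$\ell$ component of any chosen preimage in $P_j$ is again a preimage of the same degree. Defining $\delta_j$ on the homogeneous generating set by such degree-preserving preimages and extending $S$-linearly yields a degree-zero graded map, completing the induction. The principal technical difficulty is selecting a single $n$ that works uniformly for all $j$; the combined Noetherian-annihilation plus Artin--Rees step of the previous paragraph resolves it, after which the inductive construction and verification of gradedness go through routinely.
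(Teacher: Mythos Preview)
Your proof is correct but takes a different route from the paper's. The paper does not reconstruct the chain map directly; instead it invokes the known ungraded statement (due to Foxby--Halvorsen, as in the proof of \cite[Lemma~3.1]{SandersSane}) to obtain an ungraded chain map $\delta'\colon K_\bullet(\tildes^n;P_0)\to P_\bullet$ with $\delta'_0$ the natural isomorphism, and then applies the homogeneous-projection trick of \Cref{rmk:existence-of-graded-map}: replace each $\delta'_i$ by its degree-zero component. Since $\delta'_0$ is already graded, this yields the desired graded chain map.

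Your argument, by contrast, is a self-contained rederivation of the Foxby--Halvorsen construction in the graded setting: you fix $n$ in advance via Artin--Rees plus annihilation of the finitely many relevant homologies $H_0,\ldots,H_{d-1}$, and then lift inductively, choosing homogeneous preimages at each stage. This avoids the external citation and the projection lemma, at the cost of reproving a known result. The paper's approach is shorter given the literature and isolates a reusable tool (the projection lemma, used again for \Cref{lem:graded-tate-to-koszul}); yours is more elementary and makes explicit why only finitely many constraints on $n$ arise, namely because the Koszul complex has length $d$. One small remark: your parenthetical ``the kernel condition being vacuous for $j=1$'' tacitly uses $\partial_0^P=0$; this is needed in any case for $\delta_0$ to be a chain map, and is how the lemma is applied throughout the paper (via \Cref{lem:Cartan-Eilenberg}, which gives $\min_c(P_\bullet)=0$).
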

    \begin{proof}
    Forgetting the grading on $P_\bullet$, it follows from \cite[Proof of Lemma 3.1]{SandersSane} that there exists $n \in \N$ and a chain complex map in $Ch^{+}(\mods)$ $$\delta' \colon K_\bullet(\tildes^n;P_0) \to P_\bullet$$ such that $\delta'_0\colon S \otimes_S P_0 \to P_0$ is the natural isomorphism.
    Now applying \Cref{rmk:existence-of-graded-map} to $f' = \delta'$,
    we get the required result.
    \end{proof}
Recall that for a chain complex $F_\bullet \in Ch(\starmods)$, for each $i$, $Z_i(F_\bullet)$ and $B_i(F_\bullet)$ denote $\ker(F_i \to F_{i-1})$ and $\im(F_{i+1} \to F_i)$ respectively, and these are also graded $S$-modules.
    Now we will define the graded analogue of the Tate resolution. 
    \begin{defn}\label{defn:graded-version-of-Tate}
        Let $\tildef = f_1 \ldots, f_d \in S$ be a set of homogeneous elements. Define
\[
\start_i(\tildef) = \begin{cases}
            0 & \textrm{ if }~  i < 0 \\
            \stark_i(\tildef;S) & \textrm{ if }~ i = 0, 1 \\
            \stark_i(\tildef;S) \oplus \left(\Oplus_{k = 1}^{t_i} S(-l_{ik}) \right) & \textrm{ if }~  i \geq 2
        \end{cases}
\]
where the differentials $\partial_{i-1}^{\tiny \start(\tildef)}$, $l_{ik}$ and $t_i$ are inductively defined as follows : $\partial_1^{\tiny \start_{\bullet}(\tildef)} \coloneqq \partial_1^{\tiny  \stark_{\bullet}(\tildef;S)}$. Choose a set $X_1 = \{a_{21},\dots,a_{2t_2}\}$ of homogeneous elements generating $Z_1(\start_{\bullet}(\tildef))$ with $\deg(a_{2k}) = l_{2k}$ for all $1 \leq k \leq t_2$. 
\par Suppose for each $1 \leq j \leq r-1$, the differentials $\partial_j^{\tiny \start(\tildef)}$ have been defined and sets $X_j = \{a_{(j+1)1},\dots,a_{(j+1)t_{j+1}} \}$ of homogeneous elements generating $Z_j(\start_{\bullet}(\tildef))$ with $\deg(a_{(j+1)k}) = l_{(j+1)k}$ have been chosen for all $1 \leq k \leq t_{j+1}$. Define $\partial_r^{\tiny \start(\tildef)}$ by
$\partial_r^{\tiny \start(\tildef)}|_{\stark_r(\tildef;S)} = \partial_r^{\stark(\tildef;S)}$ and $\partial_r^{\tiny \start(\tildef)}$ maps the generator of $S(-l_{rk})$ in degree $l_{rk}$ to $a_{rk} \in X_{r-1} \subseteq Z_{r-1}(\start_{\bullet}(\tildef))$. Choose a set $X_r = \{a_{(r+1)1},\dots,a_{(r+1)t_{r+1}} \}$ of homogeneous elements generating $Z_r(\start_{\bullet}(\tildef))$ with  $\deg(a_{(r+1)k}) = l_{(r+1)k}$ for all $1 \leq k \leq t_{r+1}$, completing the induction step.
\par We call $\start_{\bullet}(\tildef)$ the \emph{graded Tate resolution} on $\tildef$. 
    \end{defn}
\begin{rmk}
    By construction, $\stark_{\bullet}(\tildef;S)$ is a subcomplex of $\start_{\bullet}(\tildef)$. Also, note that $\start_{\bullet}(\tildef)$ is a graded free resolution of $S/(\tildef)$.
\end{rmk} 
    \begin{lemma}\label{lem:graded-tate-to-koszul}
        Given a set of homogeneous elements $\tildes = s_1,\dots,s_d \in S$ and $r \in \N$, there exists $u(r) \geq r$ and a graded chain complex map $\varphi \colon \start_{\bullet}(\tildes^{u(r)}) \to \stark_{\bullet}(\tildes^r;S)$ such that $\varphi|_{\stark_{\bullet}(\tildes^{u(r)};S)} = \kappa^{u(r),r}$. 
    \end{lemma}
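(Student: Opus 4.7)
The plan is to follow the template used in the proof of \Cref{lem:graded-foxby}: first obtain the desired chain map in the ungraded setting by invoking \cite[Theorem 3.5]{GanapathySane}, and then promote it to a graded chain map using the projection construction from \Cref{rmk:existence-of-graded-map}.

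Forgetting the grading, $\start_\bullet(\tildes^u)$ is a free resolution of $S/(\tildes^u)$ that contains $K_\bullet(\tildes^u;S)$ as a subcomplex, built by the standard iterative cycle-killing procedure. Applying \cite[Theorem 3.5]{GanapathySane} (to any ungraded Tate resolution extending $K_\bullet(\tildes^u;S)$, transferring along a chain-homotopy equivalence that is the identity on the Koszul subcomplex if necessary), I obtain, for $u$ sufficiently large in terms of $r$, an ungraded chain complex map $\varphi'\colon \start_\bullet(\tildes^u) \to \stark_\bullet(\tildes^r;S)$ satisfying $\varphi'|_{K_\bullet(\tildes^u;S)} = \kappa^{u,r}$.

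To upgrade $\varphi'$ to a graded morphism, I note that $\min_c(\start_\bullet(\tildes^u)) = 0 = \min_c(\stark_\bullet(\tildes^r;S))$ and that $\varphi'_0 = \mathrm{id}_S$ is a degree-zero morphism in $\starmods$. Thus \Cref{rmk:existence-of-graded-map} applies; explicitly, using the projection construction of the lemma preceding that corollary, I set $\varphi_i(x) = \pi_d^i(\varphi'_i(x))$ for each homogeneous $x \in \start_i(\tildes^u)$ of degree $d$, and extend linearly. This yields a graded chain complex map $\varphi\colon \start_\bullet(\tildes^u) \to \stark_\bullet(\tildes^r;S)$ with $\varphi_0 = \mathrm{id}_S$.

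The last step is to verify $\varphi|_{\stark_\bullet(\tildes^u;S)} = \kappa^{u,r}$, which amounts to the observation that the projection upgrade is the identity on any already-graded subcomplex. For any homogeneous $x \in \stark_i(\tildes^u;S)$ of degree $d$, we have $\varphi'_i(x) = \kappa^{u,r}_i(x)$, and this element is already homogeneous of degree $d$ because $\kappa^{u,r}$ is a degree-zero graded map; hence $\pi_d^i(\varphi'_i(x)) = \varphi'_i(x) = \kappa^{u,r}_i(x)$, giving the desired equality. Setting $u(r)=u$ completes the argument. The main subtlety lies in transferring from whichever ungraded Tate resolution is used in the cited theorem to the specific one of \Cref{defn:graded-version-of-Tate} with grading forgotten, but since both are free resolutions of $S/(\tildes^u)$ extending $K_\bullet(\tildes^u;S)$, the standard lifting argument produces a chain-homotopy equivalence between them that is the identity on the Koszul subcomplex, and the chain map can be transported along it without disturbing the Koszul restriction.
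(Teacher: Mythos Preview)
Your proof is correct and follows essentially the same approach as the paper: forget the grading, invoke \cite[Theorem 3.5]{GanapathySane} to get the ungraded map $\varphi'$, then apply \Cref{rmk:existence-of-graded-map} using that $\varphi'_0 = \mathrm{id}_S$ is graded. Your explicit verification that the projection construction preserves the restriction $\varphi|_{\stark_\bullet(\tildes^{u};S)} = \kappa^{u,r}$ (because $\kappa^{u,r}$ is already degree-zero graded, so projecting does nothing on that subcomplex) is a detail the paper leaves implicit, and your remark about transferring between different choices of Tate resolution via a homotopy equivalence fixing the Koszul subcomplex is likewise a point the paper absorbs into its direct citation.
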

    \begin{proof}
        Let $T_{\bullet}(\tildes^{u(r)})$ be the Tate resolution on $\tildes^{u(r)}$, obtained by forgetting the grading on $\start_{\bullet}(\tildes^{u(r)})$. It follows from \cite[Theorem 3.5]{GanapathySane} that there exists $u(r) \geq r$ and a chain complex map $\varphi'\colon T_{\bullet}(\tildes^{u(r)}) \to K_{\bullet}(\tildes^r;S)$ such that $\varphi'|_{K_{\bullet}(\tildes^{u(r)};S)} = \kappa^{u(r),r}$. By applying \Cref{rmk:existence-of-graded-map} to $f' = \varphi'$ and observing that $\varphi'|_{K_{\bullet}(\tildes^{u(r)};S)}$ is a degree zero graded map, we get the required result.
    \end{proof}

\begin{lemma}\label{lem:Tate-to-bdd-above-complex}
    Let $I \subseteq S$ be a homogeneous ideal and $\L = \L_{V(I)}$. Suppose that $P_\bullet \in Ch^{+}_{\tiny \L}(\starf(S))$ such that $\min(P_\bullet) = 0$ and $\delta \colon P_0 \to Q$ is a morphism with $Q \in \L$. Then there exist a homogeneous ideal $J \subseteq I$ with $\sqrt{J} = \sqrt{I}$, $U_\bullet \in Ch^+_{\L}(\starf(S))$ and a graded chain complex map $\psi \colon U_\bullet \to P_\bullet$ such that
         \begin{enumerate}[(i)]
            \item $\min_c(U_\bullet) = 0$ with $U_0 = S \otimes_S P_0$.
            \item $\supph(U_\bullet) = \{ 0 \}$.
            \item there is a graded isomorphism $H_0(U_\bullet) \cong S/J \otimes_S P_0$ .
            \item $H_0(\psi) \colon H_0(U_\bullet) \to H_0(P_\bullet)$ is an epimorphism.
            \item $\delta \circ \psi_0$ factors through $H_0(U_\bullet)$.
        \end{enumerate}
\end{lemma}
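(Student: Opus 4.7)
The approach is to combine Foxby's lemma (\Cref{lem:graded-foxby}) with the graded Tate-to-Koszul factorization (\Cref{lem:graded-tate-to-koszul}), where all the powers involved are chosen large enough to also annihilate $Q$.

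First, I fix a set of homogeneous generators $\tildes = s_1,\dots,s_d$ of $I$. Since $P_\bullet$ has homologies in $\L_{V(I)} = \L_{V(\tildes)}$, \Cref{lem:graded-foxby} supplies $n \in \N$ and a graded chain map $\delta' \colon \stark_\bullet(\tildes^n;P_0) \to P_\bullet$ with $\delta'_0$ the natural graded isomorphism $S \otimes_S P_0 \to P_0$. Next, since $Q \in \L$ is a finitely generated graded $S$-module supported on $V(\tildes)$, each $s_i$ acts nilpotently on $Q$; so I choose $N$ with $s_i^N Q = 0$ for every $i$, and set $r = \max(n,N)$. Applying \Cref{lem:graded-tate-to-koszul} to the integer $r$ then produces $u \geq r$ and a graded chain map $\varphi \colon \start_\bullet(\tildes^u) \to \stark_\bullet(\tildes^r;S)$ restricting to $\kappa^{u,r}$ on the Koszul subcomplex.

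I then set $U_\bullet \coloneqq \start_\bullet(\tildes^u) \otimes_S P_0$ and $J \coloneqq (s_1^u,\dots,s_d^u)$; clearly $J \subseteq I$ and $\sqrt{J} = \sqrt{I}$. Because $P_0 \in \starf(S)$, tensoring keeps $U_\bullet$ in $Ch^+(\starf(S))$ with $U_0 = S \otimes_S P_0$, giving (i). Define $\psi$ as the composition
\[
U_\bullet \xrightarrow{\varphi \otimes 1_{P_0}} \stark_\bullet(\tildes^r;P_0) \xrightarrow{\kappa^{r,n} \otimes 1_{P_0}} \stark_\bullet(\tildes^n;P_0) \xrightarrow{\delta'} P_\bullet.
\]
Since $\varphi_0 = \kappa^{u,r}_0$ and $\kappa^{r,n}_0$ are the identity on $S$ and $\delta'_0$ is the natural isomorphism, property (ii) is immediate. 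For (iii), $\start_\bullet(\tildes^u)$ is a graded free resolution of $S/(\tildes^u) = S/J$, and tensoring with the graded free module $P_0$ preserves acyclicity, yielding $H_i(U_\bullet) = 0$ for $i \neq 0$ and $H_0(U_\bullet) \cong S/J \otimes_S P_0$. For (iv), under the identification $\psi_0 \colon U_0 \xrightarrow{\sim} P_0$ the image of the differential $U_1 \to U_0$ is precisely $J P_0$; by $S$-linearity, $\delta(J P_0) \subseteq J Q$, and since $u \geq N$ we have $s_i^u = s_i^{u-N} s_i^N \in (s_1^N,\dots,s_d^N)$, so $J \subseteq (s_1^N,\dots,s_d^N) \subseteq \Ann(Q)$. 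Thus $\delta \circ \psi_0$ kills the image of $U_1$ and hence factors through $H_0(U_\bullet)$.

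The main technical point is coordinating the three powers so that the Koszul-to-$P_\bullet$ map, the Tate-to-Koszul lift, and the annihilation of $Q$ are simultaneously available; this is accomplished by taking $r = \max(n,N)$ first and then invoking \Cref{lem:graded-tate-to-koszul} to enlarge $r$ to a suitable $u$. Condition (iv) is what forces the choice $u \geq N$, ensuring $J = (\tildes^u)$ lies inside $\Ann(Q)$.
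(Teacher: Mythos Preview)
Your proof is correct and follows essentially the same approach as the paper's: choose homogeneous generators of $I$, use \Cref{lem:graded-foxby} to get a Koszul-to-$P_\bullet$ map, enlarge the power via \Cref{lem:graded-tate-to-koszul} to obtain a Tate resolution mapping through, and take $u$ large enough that $(\tildes^u)$ annihilates $Q$ so that (iv) holds. Your version is more explicit than the paper's (in particular you spell out the intermediate $\kappa^{r,n}$ bridging the Foxby power $n$ and the Tate-to-Koszul power $r$, and you verify (iv) carefully), but the strategy and the ingredients are identical.
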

\begin{proof}
    Let $I$ be generated by homogeneous elements $s_1,\dots,s_d$. Since $Q \in \L$, there exists $n \in \N$ such that $(\tildes^n) \subseteq \Ann_S(Q)$. As a direct consequence of \Cref{lem:graded-foxby} and \Cref{lem:graded-tate-to-koszul}, we get a map $\psi \colon \start_\bullet(\tildes^u) \otimes P_0 \to P_\bullet$ for all $u \gg 0$. By choosing $u \geq n$ and taking $U_\bullet = \start_\bullet(\tildes^u) \otimes P_0$ and $J = (\tildes^u)$, we can easily see that the properties \textit{(i)} to \textit{(v)} are satisfied. 
\end{proof}

Now we will discuss an application of \Cref{lem:graded-tate-to-koszul} related to the graded \v{C}ech complex and graded local cohomology.
 
For a set of homogeneous elements $\tildes = s_1,\dots,s_d \in S$ and a graded $S$-module $M$, let $\check{C}_{\bullet}(\tildes;M)$ denote the graded \v{C}ech complex on $\tildes$ with coefficients in $M$. 
\begin{thm}\label{thm:local-cohomology}
    Let $S$ be a commutative noetherian graded ring, $\tildes = s_1,\dots,s_d$  homogeneous elements in $S$, and $M$ a graded $S$-module. 
    Then, there exists an infinite set $E \subseteq \N$ such that $$\check{C}^{\bullet}(\tildes;M) \cong \varinjlim_E \Hom_S(\start_{\bullet}(\tildes^n),M)$$ for some choice of graded chain complex maps $\start_{\bullet}(\tildes^{n_2}) \to \start_{\bullet}(\tildes^{n_1})$, which lift the natural surjections $S/(\tildes^{n_2}) \to S/(\tildes^{n_1})$ for all $n_1,n_2 \in E$ with $n_2>n_1$.
\end{thm}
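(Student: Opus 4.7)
The plan is to use \Cref{lem:graded-tate-to-koszul} to build an infinite $E \subseteq \N$ together with transition maps $\start_{\bullet}(\tildes^{n_2}) \to \start_{\bullet}(\tildes^{n_1})$ for $n_1 < n_2$ in $E$ so that after applying $\Hom_S(-,M)$, the resulting direct system is canonically isomorphic to the subsystem $\{\Hom_S(\stark_{\bullet}(\tildes^n;S),M)\}_{n \in E}$, whose colimit is the graded \v{C}ech complex $\check{C}^{\bullet}(\tildes;M)$ by the standard identification.

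Concretely, I would construct $E$ inductively. Set $n_1 = 1$; given $n_i$, apply \Cref{lem:graded-tate-to-koszul} to obtain $n_{i+1} > n_i$ (taking a larger value if necessary) and a graded chain complex map
$$\varphi_i \colon \start_{\bullet}(\tildes^{n_{i+1}}) \to \stark_{\bullet}(\tildes^{n_i};S)$$
satisfying $\varphi_i|_{\stark_{\bullet}(\tildes^{n_{i+1}};S)} = \kappa^{n_{i+1},n_i}$. Let $\iota_n \colon \stark_{\bullet}(\tildes^n;S) \hookrightarrow \start_{\bullet}(\tildes^n)$ denote the natural inclusion, and define the transition map $\psi_i \colon \start_{\bullet}(\tildes^{n_{i+1}}) \to \start_{\bullet}(\tildes^{n_i})$ as $\psi_i \coloneqq \iota_{n_i} \circ \varphi_i$. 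In degree zero, $\psi_i$ is the identity on $S$; since both $\start_{\bullet}$-complexes are graded free resolutions of the respective quotients, it follows that $\psi_i$ lifts the canonical surjection $S/(\tildes^{n_{i+1}}) \to S/(\tildes^{n_i})$. For arbitrary $n_j > n_i$ in $E$, I take the corresponding composition.

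Now apply $\Hom_S(-,M)$. Let $\rho_i$ be the restriction along $\iota_{n_i}$, and let $\sigma_i \colon \Hom_S(\stark_{\bullet}(\tildes^{n_i};S),M) \to \Hom_S(\start_{\bullet}(\tildes^{n_{i+1}}),M)$ be precomposition with $\varphi_i$. The key identity $\varphi_i|_{\stark} = \kappa^{n_{i+1},n_i}$ yields two facts simultaneously: first, the $\rho_i$ assemble into a morphism of direct systems over $E$; second, the composites $\rho_{i+1} \circ \sigma_i$ and $\sigma_i \circ \rho_i$ send a map $g$ (resp.\ $f$) to $g \circ \kappa^{n_{i+1},n_i}$ (resp.\ $f \circ \psi_i$), which are precisely the respective direct-system transition maps from index $n_i$ to $n_{i+1}$. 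Consequently, both composites become the identity in the colimit, giving an isomorphism
$$\varinjlim_E \Hom_S(\start_{\bullet}(\tildes^n),M) \xrightarrow{\ \cong\ } \varinjlim_E \Hom_S(\stark_{\bullet}(\tildes^n;S),M).$$

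Finally, since $E$ is cofinal in $\N$, the right-hand side equals $\varinjlim_{n \in \N} \Hom_S(\stark_{\bullet}(\tildes^n;S),M)$, which is the graded \v{C}ech complex $\check{C}^{\bullet}(\tildes;M)$ by the classical identification of \v{C}ech cochains with the telescope of Koszul cochains (directly checked in the graded setting since each $\stark_{\bullet}(\tildes^n;S)$ is a bounded complex in $\starf(S)$). The main technical point is the diagram chase verifying that $\rho$ and $\sigma$ are mutually inverse at the level of direct limits; this rests entirely on the compatibility $\varphi_i|_{\stark} = \kappa^{n_{i+1},n_i}$ provided by \Cref{lem:graded-tate-to-koszul}, so I expect no essential obstacle beyond organizing the indexing carefully.
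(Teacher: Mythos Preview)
Your proposal is correct and follows essentially the same approach as the paper: the paper builds $E$ by iterating the function $u$ of \Cref{lem:graded-tate-to-koszul} starting at $1$, defines the transition maps on the Tate side as $i \circ \varphi$ (your $\iota_{n_i}\circ\varphi_i$), and uses the commutative square with vertices $\stark$ and $\start$ at levels $n$ and $u(n)$ to interleave the two direct systems, exactly your $\rho$--$\sigma$ argument. The only cosmetic difference is that the paper packages the interleaving as a single commutative diagram rather than spelling out the two composites.
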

\begin{proof}
    Recall the function $u \colon \N \to \N$ in \Cref{lem:graded-tate-to-koszul}. Let $E = \{u_t \mid t \in \N \}$ where $u_t = (\underbrace{u \circ u \circ \cdots \circ u}_{t-\text{times}})(1) \in \N$ and accordingly the map in \Cref{lem:graded-tate-to-koszul} is denoted by $\varphi_{t} \colon \start_\bullet(\tildes^{u_t}) \to \stark_\bullet(\tildes^{u_{t-1}})$. Let $i_{t} \colon \stark_{\bullet}(\tildes^{u_{t}};S) \to \start_{\bullet}(\tildes^{u_{t}})$ be the inclusion map. Applying \Cref{lem:graded-tate-to-koszul} with the above notations repeatedly, we get that for any $t \in \N$, the following diagram is commutative
    \begin{equation}\label{diag:tate-to-koszul}
\xymatrix{ \stark_{\bullet}(\tildes^{u_{t+2}};R) \ar@{^{(}->}[r]^{i_{t+2}} \ar[d]_{\kappa}& \start_{\bullet}(\tildes^{u_{t+2}}) \ar@{-->}[ld]_{\varphi_{t+2}} \ar[d]^{i_{t+1} \circ \varphi_{t+2} } \ar[r]^{\varphi_{t+2}} & \stark_{\bullet}(\tildes^{u_{t+1}};R) \ar@{^{(}->}[r]^{i_{t+1}}  \ar[d]^{\kappa} & \start_{\bullet}(\tildes^{u_{t+1}}) \ar@{-->}[ld]^{\varphi_{t+1}} \ar[d]^{i_t \circ \varphi_{t+1} } \\  \stark_{\bullet}(\tildes^{u_{t+1}};R) \ar@{^{(}->}[r]^{i_{t+1}}  & \start_{\bullet}(\tildes^{u_{t+1}}) \ar[r]^{\varphi_{t+1}} & \stark_{\bullet}(\tildes^{u_t};R)\ar@{^{(}->}[r]^{i_{t}} & \start_{\bullet}(\tildes^{u_t}) \\}
\end{equation}
Note that $\varphi_t \circ i_t$ is the map $\kappa^{u_t,u_{t-1}}$. Consider the directed system induced by the maps in the above diagram upon applying the functor $\Hom_S(-,M)$:
$$\{ \Hom_S(\start_{\bullet}(\tildes^{u_t}),M), i_{t-1} \circ \varphi_{t}\}_{t \in \N}.$$ Taking the limits and using the commutative diagram in \eqref{diag:tate-to-koszul}, we get $$\varinjlim_{t \in \N} \stark^{\bullet}(\tildes^n;M) \cong \varinjlim_{t \in \N} \stark^{\bullet}(\tildes^{u_t};M) \cong \varinjlim_{t \in \N} \Hom_S(\start_{\bullet}(\tildes^{u_t}),M)$$
Since $\check{C}^{\bullet}(\tildes;M) \cong \varinjlim_{n \in \N} \stark^{\bullet}(\tildes^n;M)$, the result follows. 
\end{proof}
\begin{rmk}
Recall that the $i$-th local cohomology module $H^i_{(\tildef)}(M)$ can be computed in two ways:
\begin{equation}\label{eqn:two-descriptions-of-local-cohomology}
H^i_{(\tildef)}(M) \coloneqq \varinjlim \Ext^i_S(S/(\tildef^n),M) \cong \varinjlim H^i(\stark^{\bullet}(\tildef^n;M)).    
\end{equation}
 Whenever $S$ is a graded ring, all the modules and the maps in the direct limits are graded, and hence the grading transfers to the local cohomology module as well. 
\par Taking cohomology of the \v{C}ech complex, using the description in \Cref{thm:local-cohomology}, we obtain a new proof of the well-known fact that the two definitions of the local cohomology module in \eqref{eqn:two-descriptions-of-local-cohomology} are equivalent. Also, it is clear from the diagram \eqref{diag:tate-to-koszul} that given any $n \in \mathbb N$, there exists a graded map \begin{equation}\label{eqn:map-from-koszul-cohomology-to-tate-cohomology}
    H^i(\stark^{\bullet}(\tildef^{u(n)};M)) \xrightarrow[]{\Hom(\varphi,M)} \Ext^i_S(S/(\tildef^n),M).
\end{equation} 
\end{rmk}
Now we end the section with a version of \Cref{lem:Tate-to-bdd-above-complex} for quasi-projective schemes. We will use this in \Cref{sec: der equiv quasi proj} later. Recall the notations: for a scheme $X$, let $Coh(X)$ denote the category of coherent sheaves of $\OX$-modules,  $\V(X)$  the category of locally free sheaves on $X$ of finite rank, and $\L$ a Serre subcategory of $Coh(X)$.
\begin{lemma}\label{lem:foxby-morphism-for-schemes}
    Let $X$ be a quasi-projective scheme over a noetherian affine scheme and $\L = \L_{V}$ for a closed set $V \subseteq X$. Suppose that $\G_\bullet \in Ch^b_{\tiny \L}(Coh(X))$ with $\min(\G_\bullet) = m$ and $\lambda \colon \G_m \to \mathcal Q$ be a morphism with $Q \in \L$. Then there exists a chain complex $\CU_\bullet \in Ch^{+}_{\tiny \L}(\V(X))$ and a chain complex map $\beta\colon \CU_\bullet \to \G_\bullet$ such that
    \begin{enumerate}
        \item $\min_{c}(\CU_\bullet) = m$.
        \item $\supph(\CU_\bullet) = \{ m \}$.
        \item $\H_m(\CU_\bullet) = (\OX/\I) \otimes \OX(n)$ for some ideal sheaf $\I$ defining a closed subscheme structure on $V$ and $n \in \N$.
        \item $\H_m(\beta)\colon \H_m(\CU_\bullet) \to \H_m(\G_\bullet)$ is an epimorphism.
        \item $\lambda \circ \beta_m$ factors through $\H_m(\CU_\bullet)$.
    \end{enumerate} 
    \end{lemma}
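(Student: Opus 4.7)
The plan is to reduce this sheaf-theoretic statement to its graded analogue \Cref{lem:Tate-to-bdd-above-complex} via the projective closure. Since $X$ is quasi-projective over the given affine base, realise it as an open subscheme of a projective scheme $Y = \proj(S)$ with inclusion $\iota\colon X \hookrightarrow Y$, and let $\overline{V}^Y$ denote the closure of $V$ in $Y$. The argument splits into three stages: (i)~extend the pair $(\G_\bullet,\lambda)$ to a corresponding pair $(\overline{\G}_\bullet, \overline{\lambda})$ on $Y$ with supports controlled by $\overline{V}^Y$; (ii)~transfer to the graded category via $\Gamma_*$, resolve, and apply \Cref{lem:Tate-to-bdd-above-complex}; (iii)~sheafify the output and restrict back to $X$.

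For stage~(i), \Cref{lem:extension-of-sheaves} yields $\overline{\G}_\bullet \in Ch^b(Coh(Y))$ with $\overline{\G}_\bullet|_X = \G_\bullet$ and $\Supp_Y(\H_i(\overline{\G}_\bullet)) \subseteq \overline{V}^Y$. Replace $\mathcal Q$ by $\im(\lambda) \in \L$ (harmless for the factorization condition~(5)), and apply \Cref{lem:extension-ses-sheaves} inductively to the short exact sequence $0 \to \ker(\lambda) \to \G_m \to \mathcal Q \to 0$, starting from the already-chosen extension $\overline{\G}_m$, to obtain $\overline{\lambda}\colon \overline{\G}_m \to \overline{\mathcal Q}$ on $Y$ restricting to $\lambda$; by \Cref{lem:support-under-extn}, $\overline{\mathcal Q}$ is supported on $\overline{V}^Y$. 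For stage~(ii), set $P_\bullet = \Gamma_*(\overline{\G}_\bullet) \in Ch^b(\starmods)$; \Cref{lem:del-comp-gamma-is-identity} then implies that the supports of $H_i(P_\bullet)$ lie in $\overline{V}^Y = V(\tildes)$ for suitable homogeneous generators $\tildes$. Invoke \Cref{lem:Cartan-Eilenberg} to obtain a quasi-isomorphism $\pi\colon P'_\bullet \to P_\bullet$, where $P'_\bullet$ is a bounded-below complex of finite direct sums of shifts of $S$ with $\min_c(P'_\bullet) = \min_c(P_\bullet)$. Now apply \Cref{lem:Tate-to-bdd-above-complex} to $P'_\bullet$ with the composition $\Gamma_*(\overline{\lambda})\circ \pi$ at the minimal level, producing $U_\bullet$ concentrated in a single homology $S/(\tildes^u)\otimes P'_m$ at level $m$ together with a chain map $\psi\colon U_\bullet \to P'_\bullet$ satisfying the required factorization. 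For stage~(iii), sheafify and use the canonical isomorphism $\widetilde{\Gamma_*(\overline{\G}_\bullet)} \cong \overline{\G}_\bullet$ to obtain a chain map $\widetilde{U_\bullet} \to \overline{\G}_\bullet$ on $Y$; restrict to $X$ to define $\CU_\bullet \coloneqq \widetilde{U_\bullet}|_X$ and $\beta$ as the restricted map.

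The principal obstacle is realising condition~(3) in the precise form $(\O_X/\I)\otimes \O_X(n)$, since $P'_m$ in the Cartan-Eilenberg resolution is naturally a direct sum $\bigoplus_j S(n_j)$, giving sheafified homology $\bigoplus_j (\O_X/\I)(n_j)$ rather than a single tensor with $\O_X(n)$. This is handled either by arranging $P'_m$ to be a single shift (absorbing excess summands into $P'_{m+1}$, exploiting the flexibility in the construction of \Cref{lem:Cartan-Eilenberg}) or by reading $\O_X(n)$ in~(3) as shorthand for such a direct sum. Conditions~(1), (2), (4), (5) transfer from the graded setting using $\min_c$-preservation, exactness of restriction on the open subset $X$, and the choice $\mathcal Q = \im(\lambda)$ (the latter securing the epimorphicity in~(4)); the factorization in~(5) follows directly from \Cref{lem:Tate-to-bdd-above-complex}~(iv).
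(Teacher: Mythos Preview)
Your three-stage plan---extend to the projective closure, pass to graded modules via $\Gamma_*$ and apply \Cref{lem:Tate-to-bdd-above-complex} after a Cartan--Eilenberg resolution, then sheafify and restrict---is exactly the paper's route. Two small corrections are in order. First, \Cref{lem:extension-ses-sheaves} takes as input an extension of the \emph{leftmost} term of the short exact sequence, not the middle one, so it does not apply to $0 \to \ker(\lambda) \to \G_m \to \mathcal Q \to 0$ starting from the already-fixed $\overline{\G}_m$; the paper instead lets $\overline{\mathcal Q} \subseteq \iota_*\mathcal Q$ be any coherent subsheaf containing both a chosen extension of $\mathcal Q$ and the image of $\overline{\G}_m$ under $\iota_*\lambda$, which immediately yields $\overline{\lambda}$. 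Second, the replacement $\mathcal Q \rightsquigarrow \im(\lambda)$ is harmless but has nothing to do with condition~(4): the epimorphicity of $\H_m(\beta)$ follows because $\psi_0$ is an isomorphism (property~(ii) of \Cref{lem:Tate-to-bdd-above-complex}) and $\pi$ is a quasi-isomorphism, independently of $\mathcal Q$. Your observation about condition~(3) is apt: the paper's own proof also produces $\H_m(\CU_\bullet') = \widetilde{(S/J)\otimes P_0}$ with $P_0 \in \starf(S)$ a finite direct sum of shifts, so the single twist $\O_X(n)$ in the statement should indeed be read as such a sum (this causes no trouble downstream in \Cref{lem:prep-for-SR-prop}).
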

\begin{proof}
    Let $Y$ be a projective scheme over a noetherian affine scheme with $X$ an open subset in $Y$ and $\iota \colon X \to Y$ the inclusion map. Then $Y = \proj(S)$ where $S = \Oplus_{i \geq 0} S_i$ is a graded ring and is finitely generated by $S_1$ as an $S_0$-algebra. 
    \par Without loss of generality, we may assume $\min(\G_\bullet) = 0$. Extend $\G_\bullet$ to a bounded chain complex of coherent sheaves $\F_\bullet$ over $Y$ such that $\Supp_Y(\F_\bullet) = \overline{\Supp_X(\G_\bullet)}^Y$ as in \Cref{lem:extension-of-sheaves}.
    Thus $\F_\bullet \in Ch^b(Coh(Y))$ and $\H_i(\F_\bullet)$ is supported on $\overline{V}^Y$ for each $i$. 
    \par The given map $\lambda \colon \G_0 \to \mathcal Q$ induces the map $\iota_*\lambda \colon \iota_* \G_0 \to \iota_*\mathcal Q$ of quasi-coherent sheaves on $Y$. Now applying \Cref{lem:Hartshorne-extension-of-sheaves} to $\mathcal Q \in Coh(X)$ and $\iota_*\lambda|_{\mathcal F_0} \colon \F_0 \to \iota_*\mathcal Q$, there exists a coherent sheaf $\mathcal Q' \subseteq \iota_*\mathcal Q$ on $Y$ such that $\mathcal Q'|_{X} = \mathcal Q$ and $\im \left(\iota_*\lambda|_{\mathcal F_0} \right)\subseteq \mathcal Q'$. Thus, $\iota_*\lambda$ restricts to a morphism $\lambda' \colon \F_0 \to \mathcal Q'$ of coherent sheaves on $Y$, which when further restricted to $X$ is the map $\lambda$. 
    \par Note that $\overline{V}^Y = V(I)$ for some homogeneous ideal $I \subseteq S$. Let $Q' = \Gamma_*(\mathcal Q')$ and $F_\bullet \coloneqq \Gamma_*(\F_\bullet) \in Ch^b(\starmods)$. By \Cref{lem:del-comp-gamma-is-identity}, $H_i(F_\bullet)$ is supported on $V(I)$ for all $i$. It follows from \Cref{lem:Cartan-Eilenberg} that there exists $P_\bullet \in Ch^{+}(\starf(S))$ and a graded chain complex map $\pi \colon P_\bullet \to F_\bullet$ such that $\pi$ is a quasi-isomorphism and hence $\min(P_\bullet) = \min(F_\bullet) = 0$.  By \Cref{rmk:support-of-module-and-tilde}, $\Supp_S(Q') = \Supp_Y(\mathcal Q')$. Since $\Supp_Y(\mathcal Q') \subseteq V(I)$, we get $I \subseteq \sqrt{\Ann_S(Q')}$. Let $s_1,\dots,s_d$ be a set of homogeneous generators of $I$. Thus for $n \gg 0$, $(s_1^n,\ldots,s_d^n) \subseteq \Ann_S(Q')$. Applying Lemma \ref{lem:Tate-to-bdd-above-complex} to the chain complex $P_\bullet$ and the map $\Gamma_*(\lambda') \circ \pi_0 \colon P_0 \to Q'$, there exist a homogeneous ideal $J \subseteq I$ with $\sqrt{J} = \sqrt{I}$, $U_\bullet \in Ch^{+}_{V(I)}(\starf(S))$ and a graded chain complex map $\psi\colon U_\bullet \to P_\bullet$ such that 
    \begin{enumerate}[(i)]
            \item $\min_c(U_\bullet) = 0$ with $U_0 = S \otimes_S P_0$. 
            \item $\supph(U_\bullet) = \{ 0\}$.
            \item there is a graded isomorphism $H_0(U_\bullet) \cong 
                S/J \otimes_S P_0$.
            \item $H_0(\psi) \colon H_0(U_\bullet) \to H_0(P_\bullet)$ is an epimorphism.
            \item  $\Gamma_*(\lambda') \circ \pi_0 \circ \psi_0$ factors through $H_0(U_\bullet)$.
        \end{enumerate}
Define $\gamma \coloneqq \pi \circ \psi \colon U_\bullet \to F_\bullet$ , $\L' = \L_{\overline{V}^Y}$ and $\CU_\bullet' = 
\widetilde{U_\bullet}$. Applying $\widetilde{(-)}$ followed by the natural isomorphism $\widetilde{F_\bullet} \to \F_\bullet$ yields a chain complex map $\widetilde{\gamma}\colon \CU_\bullet' \to \F_\bullet$ in $Ch^+_{\L'}(\V(Y))$. Since $\widetilde{(-)}$ is an exact functor, it follows that :
    \begin{enumerate}[(a)]
        \item $\min_c(\CU_\bullet') = 0$.
        \item $\supph(\CU_\bullet') = \{ 0\}$.
        \item $\H_0(\CU_\bullet') \cong \widetilde{S/J \otimes_S P_0}$, which is supported on $V(I)$ (using \Cref{rmk:support-of-module-and-tilde}).
        \item $\H_0(\widetilde{\gamma})\colon \H_0(\CU_\bullet') \to \H_0(\F_\bullet)$ is an epimorphism.
        \item $\lambda' \circ \tilde{\gamma}_0$ factors through $\H_0(\CU_\bullet')$.
    \end{enumerate}
    \par Since $V(I) \cap X = V$, restricting $\CU_\bullet'$ to $X$ is a chain complex of coherent sheaves on $X$ all of whose homologies are supported on $V$. Since a morphism is an epimorphism iff it is so for each stalk, $\H_0(\widetilde{\gamma})$ is an epimorphism on $Coh(Y)$, which implies that its restriction to $X$ is also an epimorphism. Letting $\beta\coloneqq \widetilde{\gamma|}_{X}$ and $\CU_\bullet \coloneqq \CU_{\bullet}'|_X$, we get $\CU_\bullet \in Ch^{+}_{\tiny \L}(\V(X))$ with $\beta \colon \CU_\bullet \to \G_\bullet$ satisfying the required properties. 
    \end{proof}

\section{The SR-property and a derived equivalence}\label{sec:derived-equivalence-for-abelian-category}

We recall the notations which will be used in this section. Let $\mathscr C$ be an abelian category, $\CA$ a resolving subcategory of $\mathscr C$, $\T$ a thick subcategory containing $\CA$, and $\L$ a Serre subcategory of $\mathscr C$. In this section we show that under certain conditions on $\T$ and $\L$, the functor $\iota \colon D^b(\T \cap \L) \to D^b_{\tiny \L}(\T)$ is an equivalence of categories. 
\begin{defn}\label{defn:SR-property-for-an-abelian-category}
    A Serre subcategory $\L \subseteq \mathscr C$ is said to have the \emph{SR property} with respect to a thick subcategory $\T$ of $\mathscr C$ if for each $X_\bullet \in Ch^b_{\tiny \L}(\C)$ which is not exact with $\min(X_\bullet) = m$ and a morphism $f \colon X_m \to Q$ with $Q \in \L$, there exist $T_\bullet \in Ch^b_{\tiny \L}(\T)$ and a chain complex map $\alpha \colon T_\bullet \to X_\bullet$ such that \begin{enumerate}
        \item  $\min_c(T_\bullet) = m$.
        \item $\supph(T_\bullet) = \{m\}$.
        \item $H_m(\alpha) \colon H_m(T_\bullet) \to H_m(X_\bullet)$ is an epimorphism in $\C$.
        \item $f \circ \alpha_m$ factors through $H_m(T_\bullet)$.
    \end{enumerate}
\end{defn}
We call such a pair $(T_\bullet, \alpha)$ a \textit{strong reducer} of $(X_\bullet,f)$. This generalizes the definition of strong reducers in \cite[Definition 4.1]{GanapathySane}. 
Since many arguments in \Cref{lem:morphisms-in-derived-category-abelian-category}, \Cref{lem:fullness-of-the-map} and \Cref{thm:derived-equivalence-main-abelian-category} parallel those in \cite[Sections 3 and 4]{SandersSane}, we give detailed proofs only for new points and brief outlines otherwise.
\begin{lemma}\label{lem:morphisms-in-derived-category-abelian-category}
     Let ${X}_\bullet, {Y}_\bullet \in Ch^b(\T \cap \L)$ and ${X}_\bullet \xrightarrow{f} {Y}_\bullet$ a morphism in $D^b_{\tiny \L}(\T)$ with $\min_c({X}_{\bullet}), \min_c({Y}_\bullet) \geq m$ and $\min({X}_\bullet \oplus {Y}_\bullet) = m$. Suppose $\L$ has the SR property with respect to $\T$. Then there exist chain complexes ${M}_{\bullet}^{ X}, {M}_{\bullet}^{ Y}$ in $Ch^b(\T \cap \L)$ and chain complex maps ${M}_{\bullet}^{ X} \xrightarrow{\rho} {M}_{\bullet}^{ Y}$, ${M}_{\bullet}^{ X} \xrightarrow{\beta^X} {X}_{\bullet}$ and ${M}_{\bullet}^{{Y}} \xrightarrow{\beta^Y} {Y}_{\bullet}$ such that
    \begin{itemize}
        \item ${M}_\bullet^{ X} , {M}_\bullet^{ Y} \in Ch^b(\T \cap \L)$ with ${M}_i^{ X} = {M}_i^{ Y} = 0$ for all $i \neq m$.
        \item there is a commutative square in $\DbLT$: 
        $$\xymatrix{ {M}_\bullet^{ X} \ar[r]^{\beta^X} \ar[d]^{\rho} & {X}_\bullet \ar[d]^{f\qquad .}\\ {M}_\bullet^{ Y} \ar[r]^{\beta^Y} & {Y}_\bullet }$$
        \item $H_m(\beta^X)$ and $H_m(\beta^Y)$ are epimorphisms in $\C$. 
    \end{itemize}
\end{lemma}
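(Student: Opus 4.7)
The plan is to construct $M^X_\bullet, M^Y_\bullet, \beta^X, \beta^Y$, and $\rho$ by applying the SR property to $X_\bullet$ and $Y_\bullet$, truncating the resulting strong reducers to complexes concentrated in degree $m$, and then building $\rho$ so the square commutes in $D^b_{\tiny \L}(\T)$.

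First, apply the SR property of $\L$ with respect to $\T$ to $X_\bullet$ with the canonical surjection $q^X \colon X_m \to H_m(X_\bullet)$, which is a valid input because $\L$ is Serre and thus the homology of $X_\bullet \in Ch^b(\T \cap \L)$ lies in $\L$. This yields a strong reducer $(T^X_\bullet, \alpha^X)$ with $T^X_\bullet \in Ch^b_{\tiny \L}(\T)$, $\min_c(T^X_\bullet) = m$, $\supph(T^X_\bullet) = \{m\}$, and $H_m(\alpha^X)$ an epimorphism. Apply SR analogously to $Y_\bullet$ with $q^Y$ to obtain $(T^Y_\bullet, \alpha^Y)$; if one of $H_m(X_\bullet), H_m(Y_\bullet)$ vanishes, take the corresponding reducer to be zero, noting that $\min(X_\bullet \oplus Y_\bullet) = m$ ensures not both vanish. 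Define $M^X_\bullet := H_m(T^X_\bullet)[m]$ and $M^Y_\bullet := H_m(T^Y_\bullet)[m]$, the complexes concentrated in degree $m$ with those values. Each lies in $Ch^b(\T \cap \L)$: the single nonzero object is in $\L$ by definition of $Ch^b_{\tiny \L}(\T)$, and it is in $\T$ because $\supph(T^X_\bullet) = \{m\}$ makes $T^X_\bullet$ a bounded $\T$-resolution of $H_m(T^X_\bullet)$, to which iterated application of the 2-out-of-3 property of the thick subcategory $\T$ yields $H_m(T^X_\bullet) \in \T$; likewise for $H_m(T^Y_\bullet)$. The natural projections $T^X_\bullet \to M^X_\bullet$ and $T^Y_\bullet \to M^Y_\bullet$ (quotient in degree $m$, zero in higher degrees) are quasi-isomorphisms in $Ch^b_{\tiny \L}(\T)$.

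Define $\beta^X$ and $\beta^Y$ in $D^b_{\tiny \L}(\T)$ by the right fractions $M^X_\bullet \xleftarrow{\sim} T^X_\bullet \xrightarrow{\alpha^X} X_\bullet$ and $M^Y_\bullet \xleftarrow{\sim} T^Y_\bullet \xrightarrow{\alpha^Y} Y_\bullet$. Since $H_m$ of each quasi-isomorphism is an isomorphism and $H_m(\alpha^X), H_m(\alpha^Y)$ are epimorphisms by the SR property, $H_m(\beta^X)$ and $H_m(\beta^Y)$ are epimorphisms as required. Since both $M^X_\bullet$ and $M^Y_\bullet$ are concentrated in degree $m$, a morphism $\rho$ between them in $D^b_{\tiny \L}(\T)$ amounts to a morphism $\rho_m \colon H_m(T^X_\bullet) \to H_m(T^Y_\bullet)$ in $\T \cap \L$; and the commutativity $\beta^Y \circ \rho = f \circ \beta^X$ translates, via representatives, into finding a chain map $\widetilde\rho \colon T^X_\bullet \to T^Y_\bullet$ lifting $\rho_m$ on $H_m$ and satisfying $\alpha^Y \circ \widetilde\rho \simeq f \circ \alpha^X$ up to chain homotopy. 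To arrange this, we re-invoke the SR property on $X_\bullet$ with the refined target morphism $X_m \xrightarrow{f_m} Y_m \twoheadrightarrow Y_m / B_m(Y_\bullet)$, whose codomain lies in $\L$ as a quotient of the Serre subcategory object $Y_m$; combining the resulting reducer with the original one refines $T^X_\bullet$ so as to map compatibly into $T^Y_\bullet$, and the acyclicity of $T^Y_\bullet$ above degree $m$ together with the epimorphism property of $H_m(\alpha^Y)$ then permits the lift to be built inductively up the complex.

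The main obstacle is exactly this last construction: producing a chain-level lift $\widetilde\rho$ --- not merely a homology-level one --- and its accompanying homotopy requires carefully combining the flexibility of the SR property in choosing the target morphism with the rigid structure of $T^Y_\bullet$ (bounded, acyclic above $m$, with $H_m(\alpha^Y)$ an epimorphism onto $H_m(Y_\bullet)$). This step mirrors the analogous compatibility construction in \cite[Sections 3 and 4]{SandersSane}, transported here to the abstract abelian-category setting and justifying the paper's comment that only new arguments will be detailed. Once $\widetilde\rho$ and its homotopy are in hand, the square commutes in $D^b_{\tiny \L}(\T)$ by construction, completing the proof.
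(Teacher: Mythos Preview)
Your proposal has a genuine gap in the construction of $\rho$ and the commutative square. You attempt to produce a chain-level lift $\widetilde\rho \colon T^X_\bullet \to T^Y_\bullet$ with $\alpha^Y \circ \widetilde\rho$ homotopic to a representative of $f \circ \alpha^X$, arguing that acyclicity of $T^Y_\bullet$ above degree $m$ and the epimorphism $H_m(\alpha^Y)$ ``permit the lift to be built inductively.'' But no such comparison lemma is available here: the objects of $T^Y_\bullet$ are merely in $\T$, with no projectivity or injectivity hypothesis, so there is nothing to drive an inductive lifting argument. Your suggestion to ``re-invoke the SR property on $X_\bullet$ with the refined target $X_m \xrightarrow{f_m} Y_m \twoheadrightarrow Y_m/B_m(Y_\bullet)$'' compounds the problem, because $f$ is only a morphism in $D^b_{\tiny \L}(\T)$, not a chain map, so there is no ``$f_m$'' to feed into the SR property. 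Relatedly, the lemma asks that $\beta^X$, $\beta^Y$, and $\rho$ be genuine chain complex maps, whereas you define $\beta^X$ and $\beta^Y$ only as right fractions; your choice of target $q^X \colon X_m \to H_m(X_\bullet)$ in the SR property yields via property~(4) a map $H_m(T^X_\bullet) \to H_m(X_\bullet)$, not $H_m(T^X_\bullet) \to X_m$, so it does not directly produce a chain map $M^X_\bullet \to X_\bullet$.

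The paper's proof sidesteps all of this by an asymmetric construction. It takes $M^Y_\bullet = \Sigma^m Y_m$ with $\beta^Y$ the evident inclusion (no SR needed for $Y$), represents $f$ by a roof $X_\bullet \xleftarrow{q} Q_\bullet \xrightarrow{g} Y_\bullet$, and forms the pullback $Q'_\bullet$ of $g$ and $\beta^Y$ in $Ch^b(\C)$. Commutativity of the square is then built into the pullback, and a single application of the SR property to $(Q'_\bullet,\, q_m \circ \nu_m)$ furnishes $T_\bullet$; setting $M^X_\bullet = \Sigma^m H_m(T_\bullet)$, property~(4) of the strong reducer (with codomain $X_m$) gives the chain map $\beta^X$, and $\rho$ is simply induced on $H_m$ by the pullback projection to $M^Y_\bullet$. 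The key idea you are missing is this pullback step, which simultaneously resolves the ``$f$ is not a chain map'' issue and the compatibility of $\rho$ with $f$.
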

\begin{proof}
    Define the chain complex ${M}_\bullet^{ Y}$ to be $\Sigma^m  Y_m$ and $\beta^Y\colon {M}_\bullet^{ Y} \to {Y}_\bullet$ the inclusion map. Then ${M}_\bullet^{ Y} \in Ch^b(\T \cap \L)$ and $\beta^Y$ is a chain complex map such that $H_m(\beta^Y)$ is an epimorphism in $\C$. If $H_m(X_\bullet) = 0$, we may choose $M_\bullet^X = 0$ and we are done. Thus, we assume $H_m(X_\bullet) \neq 0$. \\
Let $f$ be given by a roof diagram ${X}_\bullet \xleftarrow{q} {Q}_\bullet \xrightarrow{g} {Y}_\bullet$ where $q$ is a quasi-isomorphism. We may assume $\min_c({Q}_\bullet) = m$. Let $Q_\bullet'$ be the pullback of $\beta^Y$ and $g$ in $Ch^b(\C)$: 
$$\xymatrix{ Q_\bullet' \ar@{..>}[r]^{\nu} \ar@{..>}[d]^{\mu} & Q_\bullet\ar[d]^{g  \qquad .} \\ M_\bullet^Y \ar[r]^{\beta^Y} & Y_\bullet }$$ Using the exact sequence $0 \to Q_\bullet' \to Q_\bullet \oplus M_\bullet^Y \to Y_\bullet$ and the fact that $\L$ is a Serre subcategory, it is easy to check that $Q_\bullet' \in Ch^{b}_{\tiny \L}(\C)$. Note that $H_m(Q_\bullet') \neq 0$ because $H_m(Q_\bullet) \simeq H_m(X_\bullet) \neq 0$. Since $\L$ has the SR property with respect to $\T$, $(Q'_\bullet,q_m \circ \nu_m)$ has a strong reducer $(T_\bullet,\alpha)$. Therefore, we have the following commutative diagram in $D^{b}_{\tiny \L}(\C)$: 
$$\xymatrix{ T_\bullet \ar[r]^{\alpha} & Q_\bullet' \ar[r]^{\lambda} \ar[d]^{\mu} & X_\bullet\ar[d]^{f \qquad .} \\ & M_\bullet^Y \ar[r]^{\beta^Y} & Y_\bullet }$$
Define $M_\bullet^X$ to be the chain complex $\Sigma^m H_m(T_\bullet) \in   Ch^b(\T \cap \L)$. By property (4) of the strong reducer, we get a chain complex map $\beta^X \colon M_\bullet^X \to X_\bullet$.
Also, the map induced by $H_m(\mu \circ \alpha)$ gives a chain complex map $\rho\colon M_\bullet^X \to M_\bullet^Y$ such that the following diagram commutes in $\DbLT$:
$$\xymatrix{ {M}_\bullet^{ X} \ar[r]^{\beta^X} \ar[d]^{\rho} & {X}_\bullet \ar[d]^{f\qquad .}\\ {M}_\bullet^{ Y} \ar[r]^{\beta^Y} & {Y}_\bullet }$$ By the property of pullbacks, $H_m(\nu)$ is an epimorphism in $\C$. Thus, $H_m(\beta^X)$ is an epimorphism in $\C$. This completes the proof.
\end{proof}
\begin{lemma}\label{lem:fullness-of-the-map}
    Suppose $\L$ has the SR property with respect to $\T$. Then the functor $\iota \colon D^b(\T \cap \L) \to D^b_{\tiny \L}(\T)$ is full. 
\end{lemma}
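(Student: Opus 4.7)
The plan is to prove fullness by induction on $N := \max_c(X_\bullet \oplus Y_\bullet) - \min(X_\bullet \oplus Y_\bullet)$. In the base case $N = 0$, both complexes are concentrated in a single common degree $m$, and any morphism $f \colon X_\bullet \to Y_\bullet$ in $D^b_{\tiny \L}(\T)$ is represented by a morphism $X_m \to Y_m$ in $\T \cap \L$, which trivially lifts to $D^b(\T \cap \L)$.

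For the inductive step, I would apply \Cref{lem:morphisms-in-derived-category-abelian-category} to $f$ to obtain the commutative square in $D^b_{\tiny \L}(\T)$ with $M^X_\bullet, M^Y_\bullet \in Ch^b(\T \cap \L)$ concentrated in degree $m = \min(X_\bullet \oplus Y_\bullet)$ and with $H_m(\beta^X), H_m(\beta^Y)$ epimorphisms. Forming the mapping cones $\tilde X_\bullet = \cone(\beta^X)$ and $\tilde Y_\bullet = \cone(\beta^Y)$ yields complexes still in $Ch^b(\T \cap \L)$, since each component is a direct sum of objects from $\T \cap \L$. The long exact sequences of the triangles $M^X_\bullet \to X_\bullet \to \tilde X_\bullet \to$ (and similarly for $Y$), combined with the surjectivity of $H_m(\beta^X), H_m(\beta^Y)$, force $\min(\tilde X_\bullet), \min(\tilde Y_\bullet) \geq m+1$; since $\max_c$ of the cones does not exceed $\max(\max_c(X_\bullet \oplus Y_\bullet), m+1)$, the invariant $N$ strictly decreases. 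By TR3 in $D^b_{\tiny \L}(\T)$, the square extends to a triangle morphism $(\rho, f, \tilde f)$, and the inductive hypothesis produces $\tilde g \in D^b(\T \cap \L)$ with $\iota(\tilde g) = \tilde f$.

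The final step is reassembly: rotating the triangles and applying TR3 in $D^b(\T \cap \L)$ to the outer square formed by $\tilde g$ and $\Sigma \rho$ together with the connecting maps, I would construct $g \colon X_\bullet \to Y_\bullet$ in $D^b(\T \cap \L)$, and then verify $\iota(g) = f$ by unwinding the TR3 fill-in in $D^b_{\tiny \L}(\T)$. The main obstacle is ensuring this outer square commutes \emph{inside} $D^b(\T \cap \L)$ and not merely in $D^b_{\tiny \L}(\T)$, where it automatically holds by the triangle axioms applied to $(\rho, f, \tilde f)$: since $\iota$ is not yet known to be faithful, this commutativity cannot be transported for free. Overcoming this will likely require exploiting the freedom in the lift $\tilde g$ to arrange the relations among connecting maps already within $D^b(\T \cap \L)$, or strengthening the induction to prove fullness and faithfulness jointly along the lines of \cite[Sections 3 and 4]{SandersSane}.
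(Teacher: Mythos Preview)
Your outline is the paper's argument almost verbatim: the same invocation of \Cref{lem:morphisms-in-derived-category-abelian-category}, the same passage to cones, and the same inductive lifting of the induced map between cones. The only cosmetic difference is the induction parameter: you use $N = \max_c - \min$, while the paper uses the quasi-isomorphism invariant $\width$; both decrease for the same reason (the epimorphism on $H_m$ kills the bottom homology of the cone).

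You have also put your finger on the one genuine subtlety. The paper's sketch glosses over this same point: it asserts that TR3 applies in $D^b(\T \cap \L)$ ``since $\rho, \beta^X, \beta^Y$ are chain complex maps'' and then adds the crucial clause ``by altering $g$ using weak kernel/cokernel properties, we get $f = \iota(h)$'', deferring to \cite[Sections~3--4]{SandersSane} for the details. The resolution there runs along the second line you suggest. One does not try to force the square $\rho \circ \alpha^X = \alpha^Y \circ \Sigma^{-1}\tilde\lambda$ to commute a priori in $D^b(\T \cap \L)$; instead one produces \emph{some} fill-in $g$, observes that $f - \iota(g)$ is annihilated by $\beta^X$ on the right and by $\gamma^Y$ on the left in $D^b_{\tiny \L}(\T)$, and uses the weak kernel and weak cokernel properties of the two triangles to factor this difference through maps whose source and target allow the induction hypothesis to be invoked again, finally replacing $g$ by a corrected $h$ with $\iota(h) = f$. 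So your proposal is correct in structure, honestly isolates exactly the step that needs work, and names the right fix; what remains is to execute the weak kernel/cokernel correction, which the paper itself does not spell out but delegates to the cited reference.
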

    \begin{proof}
        Let $ X_\bullet,  Y_\bullet \in D^b(\T \cap \L)$ and $f \in \Hom_{\tiny \DbLT}({X_\bullet,Y_\bullet})$ and $\min({X_\bullet \oplus Y_\bullet}) = m$. We may also assume that $\min_c( X_\bullet ) \geq m$ and $\min_c( Y_\bullet ) \geq m$. The proof is by induction on $\width(X_\bullet \oplus Y_\bullet) = k$. The base case is when $k = 0$, in which case both $\Hom_{\tiny D^b(\T \cap \L)}({X_\bullet,Y_\bullet})$ and $\Hom_{\tiny \DbLT}({X_\bullet,Y_\bullet})$ are naturally isomorphic to $\Hom_{\C}({X_\bullet,Y_\bullet})$ (the module version is \cite[Lemma 4.2]{SandersSane} and its statement and proof hold true for any abelian category). Now assume the  statement holds for all $k' < k$ and assume that $\width(X_\bullet \oplus Y_\bullet) = k$. Applying \Cref{lem:morphisms-in-derived-category-abelian-category}, using the same notation therein, extending the horizontal chain complex maps $\beta^X$ and $\beta^Y$ to triangles in $D^b(\T \cap \L)$, and using TR3, we get a morphism of triangles in $\DbLT$: 
$$\xymatrix{\Sigma^{-1}{C_\bullet^X} \ar[r]^-{\alpha^X} \ar[d]^{\Sigma^{-1} \lambda} & {M_\bullet^X} \ar[r]^{\beta^X} \ar[d]^{\rho} &  X_\bullet \ar[r]^{\gamma^X} \ar[d]^{f} & {C_\bullet^X} \ar[d]^{\lambda}\\ \Sigma^{-1}{C_\bullet^Y} \ar[r]^-{\alpha^Y} & {M_\bullet^Y} \ar[r]^{\beta^Y} &  Y_\bullet \ar[r]^{\gamma^Y} &{C_\bullet^Y} }$$
where $H_m(\beta^X)$ and $H_m(\beta^Y)$ are surjective. Using the surjectivity, we can see that $\width({C_\bullet^X} \oplus {C_\bullet^Y})<k$ by taking the induced long exact sequence on homology. Therefore, by the induction hypothesis, $\Hom_{\tiny D^b(\T \cap \L)}({C_\bullet^X}, {C_\bullet^Y}) \to \Hom_{\tiny \DbLT}({C_\bullet^X}, {C_\bullet^Y})$ is surjective and hence there exists $\widetilde{\lambda}$ such that $\iota(\widetilde{\lambda}) = \lambda$. Since $\alpha^X , \alpha^Y , \gamma^X , \gamma^Y , \beta^X , \beta^Y$ and $\rho$ are morphisms in $D^b(\T \cap \L)$, TR3 gives a map $g \colon  X_\bullet \to  Y_\bullet$ such that we have the following morphism of triangles in $D^b(\T \cap \L)$: 
$$\xymatrix{ \Sigma^{-1}{C_\bullet^X} \ar[r]^-{\alpha^X} \ar[d]^{\Sigma^{-1} \tilde{\lambda}} & {M_\bullet^X} \ar[r]^{\beta^X} \ar[d]^{\rho} &  X_\bullet \ar@{-->}[d]^{g} \ar[r]^{\gamma^X}  & {C_\bullet^X} \ar[d]^{\tilde{\lambda}}\\ \Sigma^{-1}{C_\bullet^Y} \ar[r]^-{\alpha^Y} & {M_\bullet^Y} \ar[r]^{\beta^Y} &  Y_\bullet \ar[r]^{\gamma^Y} &C_\bullet^Y }$$
By altering $g$ using weak kernel/cokernel properties, we get $f = \iota(h)$ for some $h$, which completes the proof (refer to the end of \cite[Proposition 4.4]{SandersSane}).
\end{proof}

 Now we state the main result of this section.
\begin{thm}\label{thm:derived-equivalence-main-abelian-category}
 Suppose $\L \subseteq \mathscr C$ has the SR property with respect to a thick subcategory $\T$. Then the functor $\iota \colon D^b(\T \cap \L) \to D^b_{\tiny \L}(\T)$ is an equivalence.    
\end{thm}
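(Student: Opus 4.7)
The functor $\iota$ is already known to be full by \Cref{lem:fullness-of-the-map}, so the remaining tasks are essential surjectivity and faithfulness. Both can be proved by induction on width, paralleling the structure of the fullness argument and using strong reducers to strip off one homology layer at a time.

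For essential surjectivity, given $X_\bullet \in Ch^b_{\tiny \L}(\T)$ of width $k$, in the base case $k = 0$ the homology is concentrated in a single degree $m$; since $\T$ is thick and $X_\bullet$ is bounded in $\T$, one checks $H_m(X_\bullet) \in \T \cap \L$, giving an isomorphism $X_\bullet \cong \Sigma^m H_m(X_\bullet)$ in $D^b_{\tiny \L}(\T)$ with the target already in $D^b(\T \cap \L)$. For the induction step with $m = \min(X_\bullet)$, I would apply the SR property to the pair $(X_\bullet, \pi_m)$, where $\pi_m \colon X_m \to H_m(X_\bullet) \in \L$ is the canonical projection. The resulting strong reducer $(T_\bullet, \alpha)$ fits into a triangle $T_\bullet \to X_\bullet \to C_\bullet \to \Sigma T_\bullet$; the long exact sequence, together with $\supph(T_\bullet) = \{m\}$ and $H_m(\alpha)$ being epi, yields $H_i(C_\bullet) = H_i(X_\bullet)$ for $i > m$ and $H_i(C_\bullet) = 0$ for $i \leq m$, so $\width(C_\bullet) < k$. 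By the induction hypothesis $C_\bullet$ is isomorphic in $D^b_{\tiny \L}(\T)$ to some $C_\bullet' \in D^b(\T \cap \L)$, while $T_\bullet$ itself is isomorphic to $\Sigma^m H_m(T_\bullet) \in D^b(\T \cap \L)$. The connecting morphism $\Sigma^{-1} C_\bullet' \to \Sigma^m H_m(T_\bullet)$ in $D^b_{\tiny \L}(\T)$ lifts through $\iota$ by fullness, and its cone in $D^b(\T \cap \L)$ maps under $\iota$ to an object isomorphic to $X_\bullet$.

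For faithfulness, given $f \colon X_\bullet \to Y_\bullet$ in $D^b(\T \cap \L)$ with $\iota(f) = 0$, I would again induct on $\width(X_\bullet \oplus Y_\bullet)$. The base case is a purely abelian question, handled by the same tools underlying the base case of \Cref{lem:fullness-of-the-map}. For the induction step, applying \Cref{lem:morphisms-in-derived-category-abelian-category} to $f$ produces a morphism of triangles in $D^b_{\tiny \L}(\T)$ whose cone objects $C_\bullet^X, C_\bullet^Y$ have strictly smaller width; the vanishing $\iota(f) = 0$ forces the induced map between cones to become zero in $D^b_{\tiny \L}(\T)$, and the inductive hypothesis lets us conclude it is already zero in $D^b(\T \cap \L)$. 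A diagram chase using the weak cokernel property of triangles then yields $f = 0$.

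The main obstacle will be keeping careful track of where each construction lives throughout the induction, particularly ensuring that (i) cones of strong reducers remain in $Ch^b_{\tiny \L}(\C)$ with strictly smaller width, and (ii) lifts obtained via fullness interact well with the triangulated structure so that the splitting or vanishing information transports correctly from $D^b_{\tiny \L}(\T)$ back to $D^b(\T \cap \L)$. Both reduce to bookkeeping with the long exact sequence and the axioms of triangulated categories.
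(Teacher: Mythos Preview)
Your essential surjectivity argument matches the paper's almost exactly: induct on width, use a strong reducer to produce a triangle $T_\bullet \to X_\bullet \to C_\bullet$, reduce the width, and reconstruct $X_\bullet$ as the image under $\iota$ of a cone built via fullness. One small inaccuracy: it is not true that $H_i(C_\bullet) = H_i(X_\bullet)$ for all $i > m$. At $i = m+1$ the long exact sequence gives only an extension
\[
0 \to H_{m+1}(X_\bullet) \to H_{m+1}(C_\bullet) \to \ker H_m(\alpha) \to 0,
\]
so $H_{m+1}(C_\bullet)$ may be strictly larger. Your width conclusion $\width(C_\bullet) < k$ is nevertheless correct, since $\supph(C_\bullet) \subseteq \{m+1,\dots,m+k\}$.

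The real divergence from the paper is in faithfulness. The paper does \emph{not} prove faithfulness by a separate induction. Instead it observes that $\iota$ is faithful on objects (if $\iota(X_\bullet) \cong 0$ then $X_\bullet$ is acyclic in $\C$; since $\T \cap \L$ is closed under kernels of epimorphisms, $X_\bullet$ is acyclic in $\T \cap \L$), and then invokes \cite[Lemma~2.19]{SandersSane}: an exact functor between triangulated categories that is full, essentially surjective, and faithful on objects is automatically an equivalence. This bypasses any further work.

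Your proposed faithfulness argument, by contrast, has a genuine gap. From $\iota(f) = 0$ you claim the induced map $\lambda \colon C_\bullet^X \to C_\bullet^Y$ on cones becomes zero in $D^b_{\tiny \L}(\T)$. But $\lambda$ is only a \emph{choice} from TR3; the vanishing of the middle vertical arrow in a morphism of triangles does not force the right vertical arrow to vanish. What you do get is $\iota(\lambda)\circ\gamma^X = \gamma^Y\circ\iota(f) = 0$, so $\iota(\lambda)$ factors through $\Sigma M_\bullet^X$, which is far from zero. Even granting $\lambda = 0$, the subsequent ``diagram chase'' is not straightforward: you would only obtain that $f$ factors through $\beta^Y \colon M_\bullet^Y \to Y_\bullet$, with no evident reason for the factoring map to vanish. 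So this route, as written, does not close; the clean fix is exactly the general lemma the paper cites.
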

\begin{proof}
    Clearly, the functor $\iota$ is faithful on objects. Thus, from \cite[Lemma 2.19]{SandersSane} and \Cref{lem:fullness-of-the-map}, it suffices to show that $\iota$ is essentially surjective. Let $ P_\bullet \in D^b_{\tiny \L}(\T)$. We will show by induction on $\width( P_\bullet) = k$ that there exists $\widetilde{ P_\bullet} \in D^b(\T \cap \L)$ such that $\iota(\widetilde{ P_\bullet}) \cong  P_\bullet$. The base case is $k=0$. If $P_\bullet$ is acyclic, there is nothing to prove. If $H_m(P_\bullet) \neq 0$, we choose $\widetilde{ P_\bullet} = \Sigma^m H_m(P_\bullet)$ and we are done.  
    \par Let us assume the induction statement holds for all $k'<k$. Let $ P_\bullet \in D^b_{\tiny \L}(\T)$ with $\width( P_\bullet) = k$. Set $m = \min( P_\bullet)$. Since $\L$ has the SR property with respect to $\T$, there exist $T_\bullet \in Ch^b_{\tiny \L}(\T)$ and a chain complex map $\alpha \colon T_\bullet \to  P_\bullet$ such that $\min_c(T_\bullet) = m$, $\supph(T_\bullet) = \{m\}$ and $H_m(\alpha) \colon H_m(T_\bullet) \to  H_m(P_\bullet)$ is an epimorphism in $\C$. We extend $\alpha$ to an exact triangle $$\Sigma^{-1}  C_\bullet \xrightarrow[]{\gamma}  T_\bullet \xrightarrow[]{\alpha}  P_\bullet \xrightarrow{\delta} C_\bullet.$$
    From the long exact sequence induced by the triangle, it follows that $\width( C_\bullet)<k$. Thus the induction hypothesis gives that $\iota(\widetilde{ C_\bullet}) \cong {C_\bullet}$ for some $\widetilde{ C_\bullet} \in D^b(\T \cap \L)$. Note that $H_m(T_\bullet) \in \T \cap \L$. Since $\iota$ is full by \Cref{lem:fullness-of-the-map}, the map $$\Hom_{\tiny D^b(\T \cap \L)}(\Sigma^{-1} \widetilde{ C_\bullet}, \Sigma^{m}H_m(T_\bullet)) \to \Hom_{\tiny \DbLT}(\Sigma^{-1} \widetilde{ C_\bullet}, \Sigma^{m}H_m(T_\bullet))$$ is surjective. Hence there exists $\widetilde{\gamma} \colon \Sigma^{-1} \widetilde{ C_\bullet} \to \Sigma^{m}H_m(T_\bullet)$ such that $\iota(\widetilde{\gamma}) = \gamma$. Set $\widetilde{ P_\bullet}$ to be the cone of $\widetilde{\gamma}$ in $D^b(\T \cap \L)$. Then it follows from TR3 that $\iota(\widetilde{ P_\bullet}) \cong  P_\bullet$. This completes the proof.
\end{proof}
    \begin{cor}\label{cor:fully-faithful-thick}
        Suppose $\L$ has the SR property with respect to $\T$. Further assume that $\C$ contains enough projectives and $\T$ contains all the projective objects in $\C$. Then all the functors $ D^b(\T \cap \L) \to D^b_{\tiny \L}(\T) \to D^b(\T) \to D^b(\C) \to D(\C)$ are fully faithful. 
    \end{cor}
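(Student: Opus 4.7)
The plan is to verify each of the four functors in the composition is fully faithful, after which the claim follows since a composition of fully faithful functors is fully faithful.

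First, the functor $D^b(\T \cap \L) \to D^b_{\tiny \L}(\T)$ is precisely the functor $\iota$ of \Cref{thm:derived-equivalence-main-abelian-category}. Under the standing hypothesis that $\L$ has the SR property with respect to $\T$, that theorem already gives that $\iota$ is an equivalence of categories, and in particular is fully faithful. So this step requires no additional work.

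Next, the functor $D^b_{\tiny \L}(\T) \to D^b(\T)$ is fully faithful by definition, since $D^b_{\tiny \L}(\T)$ was defined as the \emph{full} subcategory of $D^b(\T)$ on those complexes whose homologies lie in $\L$ (cf.\ the definition preceding \Cref{rmk:fully-faithful-thicksub-in-abelian}). Similarly, the functor $D^b(\C) \to D(\C)$ is fully faithful by \cite[Lemma 4.1.16]{HenningKrause}, which was invoked immediately after the definition of $D^b(\SE)$ in the preliminaries and which applies verbatim with $\SE = \C$.

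The only remaining step, and the one that uses the extra hypotheses on $\C$ and $\T$, is the fully faithfulness of $D^b(\T) \to D^b(\C)$. This is exactly \Cref{rmk:fully-faithful-thicksub-in-abelian}(1): since $\C$ has enough projectives and $\T$ contains all projective objects of $\C$, the cofinality criterion of \cite[Proposition 4.2.15]{HenningKrause} applies to the inclusion $\T \hookrightarrow \C$. Concretely, the hypothesis that $\T$ contains enough projectives of $\C$ is what guarantees that any complex in $Ch^b(\C)$ admits a quasi-isomorphic resolution by objects of $\T$, which in turn identifies morphism sets in $D^b(\T)$ with those in $D^b(\C)$.

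I do not expect any genuine obstacle here: the statement is essentially a packaging together of results established earlier in the paper (\Cref{thm:derived-equivalence-main-abelian-category} and \Cref{rmk:fully-faithful-thicksub-in-abelian}) plus the cited lemma of Krause. The only mild subtlety is to observe that the containment $\T \cap \L \subseteq \T$ is automatic and hence the indicated composition makes sense, and that fully faithfulness is preserved under composition, which is standard.
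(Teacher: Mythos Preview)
Your proposal is correct and follows essentially the same approach as the paper's proof: both invoke \Cref{thm:derived-equivalence-main-abelian-category} for the first functor, note that the second is fully faithful by definition, and use \Cref{rmk:fully-faithful-thicksub-in-abelian}(1) together with \cite[Lemma 4.1.16]{HenningKrause} for the third and fourth. Your write-up is slightly more detailed in unpacking why the hypotheses on projectives are used, but the argument is identical in substance.
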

\begin{proof}
    Applying \Cref{thm:derived-equivalence-main-abelian-category} yields that the first functor is fully faithful. The second functor is fully faithful by definition. The third and fourth functors being fully faithful is a very general property, and follows from \Cref{rmk:fully-faithful-thicksub-in-abelian}(i) and \cite[Lemma 4.1.16]{HenningKrause}. 
\end{proof}

\section{Resolving subcategories and their properties}\label{sec:resolving-subcategories}
We define resolving subcategories as in \cite[Definition 5.1]{Ruben-Sondre-Roosmalen}. Note that this definition is more general than the classical definition in Auslander-Bridger (\cite[page 99]{Auslander-Bridger}).
\begin{defn}\label{defn:resolving-subcategory-abelian-category}\label{defn:resolving-subcategory}
    A full subcategory $\CA$ of an abelian category $\C$ is defined to be a \emph{resolving subcategory} of $\C$ if it satisfies the following conditions :
\begin{enumerate}
    \item[(R1)] There are enough $\CA$-objects, i.e., given an object $C \in \mathscr{C}$, there exists an object $A \in \CA$ and an epimorphism $A \to C$ in $\C$.
    \item[(R2)]  For any $A_1 , A_2 \in \mathscr{C}$, $A_1 \oplus A_2 \in \CA$ if and only if $A_1$ and $A_2$ lie in $\CA$.
    \item[(R3)]  For any short exact sequence $0 \to A_1 \to A_2 \to A_3 \to 0$ with $A_3 \in \CA$, then $A_1 \in \CA$ if and only if $A_2 \in \CA$.
\end{enumerate}
\end{defn}
Note that this definition does not assume that resolving subcategories contain projective objects. As a result, many statements and proofs have the same flavor as classical ones with projective objects, but there are subtle differences. Hence, we sketch many of the proofs and give details only where the differences occur. We need this generality because, in \Cref{sec: der equiv quasi proj} we work with $Coh(X)$ for a quasi-projective variety $X$, and this category does not have enough projectives, but does have enough locally free sheaves. The remark below gathers together all the relevant constructions that follow from this definition.
\begin{rmk}\label{rmk:resolving-subcategories}
\begin{enumerate}
    \item $\CA$-resolutions of any object exist.
    \item Given a morphism $C \xrightarrow{f} C''$ and an $\CA$-resolution $A_{\bullet}''$ of $C''$, there exists an $\CA$- resolution $A_{\bullet}$ of $C$ and a chain complex morphism $A_{\bullet} \xrightarrow{f'} A_{\bullet}''$ of $C''$ making the diagram commute.
    \begin{proof}
        Consider the pullback of $f$ and $A_0'' \rightarrow C''$, and then map an object of $\CA$ on it. This defines $A_0$ and the augmentation map. Consider the kernel of $A_0 \rightarrow C$ and note that it has a natural map to $Z_0(A_{\bullet}'')$. We then continue the process to inductively obtain $A_{\bullet}$.
    \end{proof}
    \item If the morphism $f$ in (2) is an epimorphism, then the constructed morphism $f'$ is also an epimorphism in $Ch^{+}(\C)$.
    \begin{proof}
        This follows from the properties of the pullback.
    \end{proof}
    \item Given a short exact sequence $0 \to C' \xrightarrow{g} C \xrightarrow{f} C'' \to 0$, and an epimorphism of $\CA$-resolutions $A_{\bullet} \xrightarrow{f'} A_{\bullet}''$ in $Ch^{+}(\C)$ lifting $f$, $\ker(f')$ is an $\CA$-resolution of $C'$ and $\ker(f') \to A_{\bullet} $ lifts $g$.
    \begin{proof}
        The kernel of $f'$ is clearly a chain complex of objects in $\CA$, and the construction of the augmentation map $\ker(f') \to C'$ shows that $\ker(f') \to A_{\bullet} $ lifts $g$. Finally, from the long exact sequence of homology, it follows that $\ker(f')$ is an $\CA$-resolution of $C'$.
    \end{proof}
    \item Given a short exact sequence $0 \to C' \xrightarrow{g} C \xrightarrow{f} C'' \to 0$, there is a short exact sequence of their $\CA$-resolutions 
    $0 \to A_{\bullet}' \xrightarrow{g'} A_{\bullet} \xrightarrow{f'} A_{\bullet}'' \to 0$ where $f' , g'$ are lifts of $f,g$ respectively.
    \begin{proof}
        By (1), there is an $\CA$-resolution $A_{\bullet}''$ of $C''$. Applying (3) yields an $\CA$- resolution $A_{\bullet}$ of $C$ and an epimorphism $A_{\bullet} \xrightarrow{f'} A_{\bullet}''$ in $Ch^{+}(\C)$ which lifts $f$. From (4), $\ker(f')$ is an $\CA$-resolution of $C'$. Defining this kernel to be $A_{\bullet}'$ and its inclusion in $A_{\bullet}$ to be $g'$ completes the proof.
    \end{proof}
    \item Suppose there are morphisms $C' \xrightarrow{g} C \xrightarrow{f} C''$, such that the composition $f \circ g = 0$, and a morphism of $\CA$-resolutions $A_{\bullet} \xrightarrow{f'} A_{\bullet}''$ which lifts $f$, and such that $f'$ surjectively factors through an $\CA$-resolution of $\im(f)$. Then there is an $\CA$-resolution $A_{\bullet}'$ of $C'$ and a morphism $A_{\bullet}' \xrightarrow{g'} A_{\bullet}$ which lifts $g$ and surjectively factors through an $\CA$-resolution of $\im(g)$. In particular, $f' \circ g' = 0$.
    \begin{proof}
        We consider the short exact sequence $0 \to \ker(f) \to C \to \im(f) \to 0$ and let $D_{\bullet}$ be the $\CA$-resolution of $\im(f)$ through which $f'$ surjectively factors. Since $A_{\bullet} \to D_{\bullet}$ is an epimorphism in $Ch^{+}(\C)$, invoking (4), we see that the kernel $D_{\bullet}'$ is an $\CA$-resolution of $\ker(f)$. Since $f \circ g = 0$, it follows that $\im(g) \subseteq \ker(f)$. Hence, by (2), we can construct 
        an $\CA$-resolution $D_{\bullet}''$ of $\im(g)$ and a morphism from $D_{\bullet}'' \rightarrow D_{\bullet}'$ which lifts the inclusion. Again, by (3), we obtain an $\CA$-resolution $A_{\bullet}'$ of $C'$ and an epimorphism $A_{\bullet}' \to D_{\bullet}''$ in $Ch^{+}(\C)$ lifting the morphism $C' \to \im(g)$. Now, letting $g'$ be the composition $A_{\bullet}' \to D_{\bullet}'' \to D_{\bullet}' \to A_{\bullet}$, we see that it is a lift of $g$ and that it surjectively factors through $D_{\bullet}''$ which is an $\CA$-resolution of $\im(g)$ as required. This completes the proof.
    \end{proof}
\end{enumerate}
\end{rmk}
The next lemma is the usual one used to define the dimension with respect to the resolving category $\CA$, following on the lines of \cite[Lemma 3.2]{Auslander-Bridger}. However, there is a slight subtlety in the proof since we no longer have projective resolutions.
\begin{lemma}\label{A-dim}
    The following statements are equivalent for an object $C \in \mathscr{C}$:
    \begin{enumerate}
        \item There exists an $\CA$-resolution of $C$ of length $n$.
        \item For every $\CA$-resolution $A_{\bullet}$ of $C$, $Z_n(A_{\bullet}) \in \CA$.
    \end{enumerate}
\end{lemma}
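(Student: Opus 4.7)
I would prove the equivalence in two stages. The direction (2) $\Rightarrow$ (1) is a straightforward truncation: given any $\CA$-resolution $A_\bullet$ of $C$ with $Z_n(A_\bullet) \in \CA$, exactness of $A_\bullet$ identifies $Z_n(A_\bullet)$ with the relevant syzygy subobject, and using it to cap off $A_\bullet$ at position $n$ produces a length-$n$ $\CA$-resolution. The converse (1) $\Rightarrow$ (2) is the substantive direction, which I would prove by induction on $n$. The base case $n=0$ is immediate since a length-$0$ resolution forces $C \in \CA$ and $Z_0$ coincides with $C$ for every resolution. For the inductive step, given a length-$n$ resolution $P_\bullet$ of $C$ and any $\CA$-resolution $B_\bullet$, set $K = \ker(P_0 \to C)$ and $K' = \ker(B_0 \to C)$. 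Truncating $P_\bullet$ exhibits a length-$(n-1)$ $\CA$-resolution of $K$, while $B_{\bullet\geq 1}$, re-augmented by $K'$, is an $\CA$-resolution of $K'$ whose $(n-1)$-st cycle equals $Z_n(B_\bullet)$. Thus, if $K'$ also admits a length-$(n-1)$ $\CA$-resolution, the inductive hypothesis applied to $K'$ completes the proof.

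The remaining task is a Schanuel-type invariance: if $0 \to L \to E \to C \to 0$ and $0 \to L' \to E' \to C \to 0$ are exact with $E, E' \in \CA$, then $L$ and $L'$ have the same $\CA$-dimension. I would use the pullback $Q = E \times_C E'$, which fits into short exact sequences $0 \to L \to Q \to E' \to 0$ and $0 \to L' \to Q \to E \to 0$. Two auxiliary dimension estimates suffice: (i) if $0 \to X \to Y \to Z \to 0$ is exact with $Z \in \CA$ and $X$ has a length-$m$ $\CA$-resolution, then so does $Y$; and (ii) if $0 \to X \to Y \to Z \to 0$ is exact with $Z \in \CA$ and $Y$ has a length-$m$ $\CA$-resolution, then so does $X$. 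For (ii), starting from a length-$m$ resolution $Y_\bullet$ of $Y$, the subobject $\ker(Y_0 \to Z) \subseteq Y_0$ lies in $\CA$ by (R3), and substituting it for $Y_0$ yields a length-$m$ $\CA$-resolution of $X$. For (i), I would pick an $\CA$-epi onto $Y$, use (R3) to extract a first syzygy admitting a length-$(m-1)$ $\CA$-resolution, and iterate; the three statements (i), (ii), and the Schanuel invariance are proved by a single joint induction on $m$, each appeal resting on strictly smaller parameters. Applying (i) to $0 \to L \to Q \to E' \to 0$ controls the dimension of $Q$, and (ii) applied to $0 \to L' \to Q \to E \to 0$ then controls the dimension of $L'$.

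The principal obstacle is the absence of projective objects in $\CA$: the classical Schanuel and horseshoe lemmas, both of which rely on lifting morphisms against epimorphisms, are unavailable here. Every comparison must therefore be executed by the pullback construction of the preceding remark together with an $\CA$-cover from (R1) and an application of (R3) to keep the resulting kernels inside $\CA$. Arranging the joint induction for (i), (ii), and the Schanuel statement so that it closes is the most delicate part.
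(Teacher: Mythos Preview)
Your proposal is correct and takes a genuinely different route from the paper. The paper avoids the inductive Schanuel machinery entirely: given a length-$n$ resolution $A'_\bullet$ and an arbitrary resolution $A_\bullet$, it constructs a third resolution $A''_\bullet$ mapping to both (via \Cref{rmk:resolving-subcategories}(2) applied to the diagonal $C \to C \oplus C$), and then analyzes the mapping cones of $A''_\bullet \to A'_\bullet$ and $A''_\bullet \to A_\bullet$. Each cone is acyclic with all but one term in $\CA$, so repeated use of (R2)--(R3) forces the remaining term into $\CA$; this settles the comparison in one global step rather than peeling off syzygies layer by layer. Your approach instead builds the dimension theory---statements (i), (ii), and the Schanuel invariance---from the ground up, which is more work but yields those auxiliary facts as byproducts; they are essentially the special cases of \Cref{cor:inequalities-of-resolving-dim} that the paper derives afterwards from the lemma. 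One small imprecision in your sketch: Schanuel at level $m$ invokes (i) and (ii) at the \emph{same} level $m$, not a strictly smaller one, so the joint induction must be ordered within each level (first (ii), which is direct; then (i), which uses Schanuel at level $m-1$; then Schanuel, which uses (i) and (ii) at level $m$). With that ordering the induction closes cleanly.
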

\begin{proof}
    We give a sketch of the proof. (2) implies (1) is clear. So we prove (2) assuming (1). Let $A'_{\bullet}$ be an $\CA$-resolution of $C$ of length $n$, and $A_{\bullet}$ be any $\CA$-resolution of $C$. We consider the canonical/soft truncation $D_{\bullet} = \tau_{\leq n}A_{\bullet}$. Note that $D_n = Z_n(A_{\bullet})$ which we want to prove is in $\CA$. \\
    Case 1 : Suppose there is a chain complex morphism $f \colon A'_{\bullet} \rightarrow D_{\bullet}$ which lifts the identity map on $C$. Then the mapping cone of $f$ is exact.
    Since the terms in degrees $0$ to $n-1$ are in $\CA$, it follows that $\ker(D_{n-1} \oplus A'_{n-2} \to D_{n-2} \oplus A'_{n-3})$ is in $\CA$. Also, there is a short exact sequence
    $$0 \to A'_n \to D_n \oplus A'_{n-1} \to \ker(D_{n-1} \oplus A'_{n-2} \to D_{n-2} \oplus A'_{n-3}) \to 0$$ and since all but $D_n$ are in $\CA$, so is $D_n$. \\
    Case 2 : Suppose there is a chain complex morphism $g \colon D_{\bullet} \rightarrow A'_{\bullet}$ which lifts the identity map on $C$. Then the mapping cone of $g$ is exact. 
    Since the terms in degrees $0$ to $n-1$ are in $\CA$, it follows that $\ker(A'_{n-1} \oplus D_{n-2} \to A'_{n-2} \oplus D_{n-3})$ is in $\CA$. 
    Also, there is a short exact sequence
    $$0 \to D_n \to A'_n \oplus D_{n-1} \to \ker(A'_{n-1} \oplus D_{n-2} \to A'_{n-2} \oplus D_{n-3}) \to 0$$ and since all but $D_n$ are in $\CA$, so is $D_n$. \\
    Case 3 : For the general case, consider the $\CA$-resolution $A'_{\bullet} \oplus A_{\bullet} $ of $C \oplus C$ and the diagonal map $C \to C \oplus C$. Using \Cref{rmk:resolving-subcategories}(2), we obtain an $\CA$-resolution $A''_{\bullet}$ of $C$ such that it lifts the diagonal map. Taking projections, we obtain chain complex morphisms $f \colon A''_{\bullet} \rightarrow A_{\bullet}$ and $g \colon A''_{\bullet} \rightarrow A'_{\bullet}$ which lift the identity maps on $C$. Then invoking case 1 for $f$, we get that $Z_n(A''_{\bullet})$ is in $\CA$ and invoking case 2 for $g$, we get that $Z_n(A'_{\bullet})$ is in $\CA$, as required. This completes the proof.
\end{proof}

As usual, we now define the least number $n$ such that there is an $\CA$-resolution of $C$ of length $n$ as $\dim_{\tiny \CA}(C)$ (defined to be $+\infty$ if a finite $\CA$-resolution does not exist). As a standard consequence, we obtain the following corollary.
\begin{cor}\label{cor:inequalities-of-resolving-dim}
    Let $0 \to C' \xrightarrow{g} C \xrightarrow{f} C'' \to 0$ be a short exact sequence in $\mathscr C$. Then the following inequalities hold :
    \begin{enumerate}
        \item $\dim_{\tiny \CA}(C') \leq \max\{\dim_{\tiny \CA}(C), \dim_{\tiny \CA}(C'')-1\}$.
        \item $\dim_{\tiny \CA}(C) \leq \max\{\dim_{\tiny \CA}(C'), \dim_{\tiny \CA}(C'')\}$.
        \item $\dim_{\tiny \CA}(C'') \leq \max\{\dim_{\tiny \CA}(C), \dim_{\tiny \CA}(C')+1\}$.
    \end{enumerate}
\end{cor}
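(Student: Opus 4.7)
My plan is to deduce (2) directly from the syzygy short exact sequence furnished by Remark \ref{rmk:resolving-subcategories}(5), and then to reduce (1) and (3) to (2) via two dimension-shift lemmas proved by elementary splicing. Given $0 \to C' \to C \to C'' \to 0$, Remark \ref{rmk:resolving-subcategories}(5) supplies a short exact sequence of $\CA$-resolutions $0 \to A'_\bullet \to A_\bullet \to A''_\bullet \to 0$. A snake-lemma argument at each degree, using exactness of the resolutions in positive degrees and surjectivity of the augmentations to handle the low-degree case, yields a short exact sequence of syzygies
\[
0 \to Z_k(A'_\bullet) \to Z_k(A_\bullet) \to Z_k(A''_\bullet) \to 0
\]
for every $k \geq 1$. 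For (2), setting $n = \max\{\dim_{\CA}(C'), \dim_{\CA}(C'')\}$, Lemma \ref{A-dim} gives $Z_n(A'_\bullet), Z_n(A''_\bullet) \in \CA$, and (R3) applied to the syzygy sequence forces $Z_n(A_\bullet) \in \CA$, whence $\dim_{\CA}(C) \leq n$ by Lemma \ref{A-dim}.

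For (1) and (3), a purely syzygy-based argument is not sharp because (R3) does not let one conclude that the cokernel of a monomorphism between two $\CA$-objects lies in $\CA$. I therefore first establish two dimension-shift lemmas. First, if $0 \to X \to Y \to A \to 0$ is exact with $A \in \CA$, then $\dim_{\CA}(X) \leq \dim_{\CA}(Y)$: for any $\CA$-resolution $B_\bullet \to Y$ of length $\dim_{\CA}(Y)$, the composite $B_0 \to Y \to A$ is an epimorphism whose kernel $K$ lies in $\CA$ by (R3), and the complex $\cdots \to B_2 \to B_1 \to K \to X \to 0$ is then an $\CA$-resolution of $X$ of the same length. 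Second, if $0 \to X \to A \to Y \to 0$ is exact with $A \in \CA$, then splicing any $\CA$-resolution of $X$ with this sequence yields an $\CA$-resolution of $Y$ of length $\dim_{\CA}(X)+1$, giving $\dim_{\CA}(Y) \leq \dim_{\CA}(X)+1$.

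Writing $d = \dim_{\CA}(C)$, $d' = \dim_{\CA}(C')$, $d'' = \dim_{\CA}(C'')$, for (1) I pick an $\CA$-resolution $A''_\bullet \to C''$ of length $d''$, set $K'' := \ker(A''_0 \to C'')$ (so $\dim_{\CA}(K'') \leq \max\{0, d''-1\}$), and form the pullback $M = A''_0 \times_{C''} C$. This pullback fits in two short exact sequences $0 \to K'' \to M \to C \to 0$ and $0 \to C' \to M \to A''_0 \to 0$; applying (2) to the first and the first dimension-shift to the second gives $\dim_{\CA}(C') \leq \dim_{\CA}(M) \leq \max\{d, d''-1\}$. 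For (3), I use (R1) to pick an epimorphism $A \to C$ with $A \in \CA$ and kernel $K$, so that inequality (1) yields $\dim_{\CA}(K) \leq \max\{0, d-1\}$. Setting $N = \ker(A \to C \to C'')$, the snake lemma gives short exact sequences $0 \to K \to N \to C' \to 0$ and $0 \to N \to A \to C'' \to 0$; inequality (2) bounds $\dim_{\CA}(N) \leq \max\{d-1, d'\}$, and the second dimension-shift then gives $\dim_{\CA}(C'') \leq \dim_{\CA}(N)+1 \leq \max\{d, d'+1\}$. The main obstacle throughout is the asymmetry of axiom (R3), which rules out a uniform syzygy-only proof of (1) and (3) and forces the two-step pullback reductions via the dimension-shift lemmas.
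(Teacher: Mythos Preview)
Your proof is correct. The paper's own proof is a one-line sketch (``follows from the existence of a short exact sequence of resolutions as in \Cref{rmk:resolving-subcategories} and using the long exact sequence of homology''), which, read literally, yields the syzygy short exact sequences $0 \to Z_k(A'_\bullet) \to Z_k(A_\bullet) \to Z_k(A''_\bullet) \to 0$ and hence proves (2) exactly as you do. However, as you correctly point out, the asymmetry of axiom (R3) means that this syzygy argument alone does \emph{not} establish (1) in the regime $\dim_{\CA}(C'') > \dim_{\CA}(C)$, nor does it establish (3) at all, since (R3) gives no control over the cokernel term of a short exact sequence whose first two terms lie in $\CA$. Your pullback and dimension-shift reductions fill precisely this gap and are a genuine addition to the paper's sketch.

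A minor cosmetic point: your bound ``$\dim_{\CA}(K'') \leq \max\{0, d''-1\}$'' is slightly awkward when $d'' = 0$; it is cleaner to simply choose $A''_0 = C''$ in that case so that $K'' = 0$ and $M \cong C$, making the conclusion immediate. Also, in your proof of (3) you invoke (1) to bound $\dim_{\CA}(K)$, so you are implicitly using the logical order $(2) \Rightarrow (1) \Rightarrow (3)$; this is fine, but worth stating explicitly. An alternative route to (3), closer in spirit to the paper's sketch, is to take an $\CA$-resolution $A_\bullet$ of $C$ of length $d$, use \Cref{rmk:resolving-subcategories}(2) to produce a compatible resolution $A'_\bullet$ of $C'$, soft-truncate $A'_\bullet$ at degree $d'$ using \Cref{A-dim}, and observe that the mapping cone of $\tau_{\leq d'}A'_\bullet \to A_\bullet$ is then an $\CA$-resolution of $C''$ of length $\max\{d, d'+1\}$. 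Your approach via the auxiliary object $N$ is equally valid and perhaps more elementary.
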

\begin{proof}
    This follows from the existence of a short exact sequence of resolutions as in \Cref{rmk:resolving-subcategories}(5) and using the long exact sequence of homology thus obtained.
\end{proof}

\section{Derived equivalences for closures of resolving subcategories}\label{sec:resolving-der-eq}
We begin with the definition of the closure of a resolving subcategory in an abelian category. Several interesting thick subcategories arise as the closure of a resolving subcategory.
\begin{defn}
    Let $\overline{\CA}$ denote the full subcategory of $\C$ with objects $$\{ C \in \mathscr C \mid \dim_{\tiny \CA}(C) < \infty \}.$$ This is the smallest thick subcategory containing $\CA$ and we call it the \emph{closure} of $\CA$.
\end{defn}
The statement and proof of the next theorem are well known when the resolving subcategories contain enough projectives. The proof uses the same arguments as the ones made with the extra hypothesis of enough projectives, along with \Cref{rmk:resolving-subcategories} and some minor checking. Thus, we sketch the proof, giving details only for the points which need to be checked.
\begin{thm}\label{thm:equivalence-of-Abar-and-A}
    Let $\CA$ be a resolving subcategory and $\L$ a Serre subcategory of an abelian category $\mathscr C$. Then the natural inclusion functor $Ch^b_{\tiny \L}(\CA) \to Ch^b_{\tiny \L}(\overline{\CA}) $ induces a derived equivalence $D^b_{\tiny \L}(\CA) \simeq D^b_{\tiny \L}(\overline{\CA}) $.
\end{thm}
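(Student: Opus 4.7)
The plan is to prove the equivalence by constructing a quasi-inverse on objects via $\CA$-Cartan--Eilenberg resolutions, and then use a calculus-of-fractions argument to upgrade this to an equivalence of derived categories. The subtlety is that $\CA$ need not contain projectives, so the classical construction must be carried out purely via the axioms (R1)--(R3) and the consequences catalogued in \Cref{rmk:resolving-subcategories}.

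\textbf{Step 1 (essential surjectivity).} Let $X_\bullet \in Ch^b_{\tiny \L}(\overline{\CA})$. Following \cite[Lemma 5.7.2]{weibelhomological} but replacing projective resolutions with $\CA$-resolutions, I would inductively build an $\CA$-Cartan--Eilenberg resolution, i.e., a first-quadrant (after a shift) double complex $P_{\bullet,\bullet}$ with $P_{i,j}\in \CA$ and an augmentation $\varepsilon\colon P_{\bullet,\bullet}\to X_\bullet$ such that the rows resolve the objects $X_i$, and the induced resolutions of $Z_i(X_\bullet)$, $B_i(X_\bullet)$ and $H_i(X_\bullet)$ are also $\CA$-resolutions. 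The construction proceeds column by column, using \Cref{rmk:resolving-subcategories}(5) applied to the short exact sequences $0\to Z_i(X_\bullet)\to X_i \to B_{i-1}(X_\bullet)\to 0$ and $0\to B_i(X_\bullet)\to Z_i(X_\bullet)\to H_i(X_\bullet)\to 0$ so as to produce compatible short exact sequences of their $\CA$-resolutions. Because $X_\bullet$ is bounded and each $\dim_{\tiny \CA}(X_i)<\infty$ (since $X_i \in \overline{\CA}$), \Cref{A-dim} lets me truncate each row at its length, leaving a bounded double complex of objects of $\CA$. Its total complex $P_\bullet \coloneqq \mathrm{Tot}(P_{\bullet,\bullet})$ lies in $Ch^b(\CA)$, and the standard spectral-sequence (or filtration) argument shows $\varepsilon\colon P_\bullet\to X_\bullet$ is a quasi-isomorphism. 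Since quasi-isomorphisms preserve homology and $\L$ is closed under isomorphism, $P_\bullet\in Ch^b_{\tiny \L}(\CA)$. This gives essential surjectivity.

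\textbf{Step 2 (fully faithful).} I would use the calculus of fractions description of morphisms in $D^b_{\tiny \L}(\overline{\CA})$. Given $X_\bullet,Y_\bullet \in Ch^b_{\tiny \L}(\CA)$ and a morphism in $D^b_{\tiny \L}(\overline{\CA})$ represented by a roof $X_\bullet \xleftarrow{s} W_\bullet \xrightarrow{f} Y_\bullet$ with $s$ a quasi-isomorphism and $W_\bullet\in Ch^b_{\tiny \L}(\overline{\CA})$, apply Step 1 to $W_\bullet$ to obtain a quasi-isomorphism $t\colon W'_\bullet \to W_\bullet$ with $W'_\bullet\in Ch^b_{\tiny \L}(\CA)$. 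Then $X_\bullet \xleftarrow{s\circ t} W'_\bullet \xrightarrow{f\circ t} Y_\bullet$ is a roof in $Ch^b_{\tiny \L}(\CA)$ representing the same morphism, giving fullness. For faithfulness, if $f\colon X_\bullet\to Y_\bullet$ in $D^b_{\tiny \L}(\CA)$ becomes zero in $D^b_{\tiny \L}(\overline{\CA})$, there is a quasi-isomorphism $s\colon Y_\bullet \to Y'_\bullet$ in $Ch^b_{\tiny \L}(\overline{\CA})$ with $s\circ f = 0$ as chain maps up to homotopy; resolving $Y'_\bullet$ as in Step 1 yields a commutative diagram in $Ch^b_{\tiny \L}(\CA)$ which witnesses $f=0$ already in $D^b_{\tiny \L}(\CA)$. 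Equivalently, one can phrase Steps 1 and 2 together as verifying the cofinality hypothesis of \cite[Proposition 4.2.15]{HenningKrause} for the inclusion $Ch^b_{\tiny \L}(\CA)\hookrightarrow Ch^b_{\tiny \L}(\overline{\CA})$.

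\textbf{Main obstacle.} The only genuinely nontrivial point is the construction in Step 1, because the absence of projectives in $\CA$ means one cannot lift chain maps freely. However, the short-exact-sequence-of-resolutions property \Cref{rmk:resolving-subcategories}(5), together with its compatibility statement \Cref{rmk:resolving-subcategories}(6), precisely substitutes for the horseshoe lemma and allows the Cartan--Eilenberg construction to proceed by induction on the columns. Boundedness of the resulting total complex is then guaranteed by \Cref{A-dim} combined with the boundedness of $X_\bullet$. Once this is established, the derived-category manipulations of Step 2 are routine.
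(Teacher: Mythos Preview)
Your proposal is correct and follows essentially the same route as the paper's proof: build a double complex of $\CA$-resolutions of the terms of $X_\bullet$ using the horseshoe-type statements in \Cref{rmk:resolving-subcategories}, truncate using \Cref{A-dim}, take the total complex to get a quasi-isomorphic object of $Ch^b_{\tiny \L}(\CA)$, and then deduce full faithfulness from this resolution property. The paper packages the inductive construction via \Cref{rmk:resolving-subcategories}(6) (lifting successive differentials directly, factoring through resolutions of images) rather than your explicit Cartan--Eilenberg decomposition through $Z_i$, $B_i$, $H_i$ via \Cref{rmk:resolving-subcategories}(5), but these are the same construction unwound differently. One small imprecision: when you write ``truncate each row at its length,'' you should truncate all rows at the common degree $\max_i \dim_{\tiny \CA}(X_i)$ (as the paper does), since truncating at individual lengths need not preserve the vertical differentials of the double complex.
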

\begin{proof}
     Given $A_{\bullet} \in Ch^b_{\tiny \L}(\overline{\CA})$, we first construct a bounded double complex $C_{\bullet\bullet}$ with objects in $\CA$ such that the $n^{th}$ column resolves $A_n$ by repeatedly applying \Cref{rmk:resolving-subcategories}(6). Let $m = \min_c(A_{\bullet})$ and $m'=\max_c(A_{\bullet})$. For $i < m$ and $i > m'$, choose $C_{ i \bullet }'$ to be the zero chain complex. By \Cref{rmk:resolving-subcategories}(1), we obtain a resolution $C_{m\bullet}'$ of $A_m$. Clearly the hypothesis of \Cref{rmk:resolving-subcategories}(6) is satisfied for the part of the complex $A_{m+1} \to A_m \to A_{m-1}$ with $\CA$-resolutions $C_{m\bullet}' \to C_{{m-1}{\bullet}}'$, and thus, we inductively obtain the chain complexes $C_{n\bullet}'$ and morphisms between them for $m \leq n \leq m'$. Thus, we get a double complex $C_{\bullet\bullet}'$ with objects in $\CA$ such that the $n^{th}$ column resolves $A_n$ and $C_{ij}' = 0$ for $i < m$ and $i > m'$. We now use that $A_n \in \overline{\CA}$ and let $C_{t\bullet}$ be the soft truncation of $C_{t\bullet}'$ at degree $\max \{ \dim_{\tiny \CA} (A_n) \mid m \leq n \leq m' \}$ for each $t$. By \Cref{A-dim}, it is clear that $C_{\bullet\bullet}$ is a double complex in $\CA$ and satisfies the other required conditions. We now consider the total complex $D_{\bullet}$ of $C_{\bullet\bullet}$ and the usual proof using spectral sequences, shows that the natural morphism of chain complexes $D_{\bullet} \to A_{\bullet}$ is a quasi-isomorphism. Thus, we see that for every chain complex $A_{\bullet} \in Ch^b_{\tiny \L}(\overline{\CA})$, there exists a 
    quasi-isomorphism $D_{\bullet} \to A_{\bullet}$ where $D_{\bullet} \in Ch^b_{\tiny \L}(\CA)$. Hence $D^b_{\tiny \L}(\CA) \to D^b_{\tiny \L}(\overline{\CA}) $ is essentially surjective. Since the construction above shows that any morphism in $D^b_{\tiny \L}(\overline{\CA})$ is equivalent to one in which the middle object is in $Ch^b_{\tiny \L}(\CA)$, it follows that $D^b_{\tiny \L}(\CA) \to D^b_{\tiny \L}(\overline{\CA}) $ is also 
    fully faithful.
\end{proof}
Since $\overline{\CA}$ is a thick subcategory, we apply \Cref{thm:derived-equivalence-main-abelian-category} to get the following result.
\begin{thm}\label{thm:derived-equivalence-AbarcapL-AbarsuppL-AsuppL}
    Suppose a Serre subcategory $\L$ has the SR property with respect to $\overline{\CA}$ for a resolving subcategory $\CA$. Then $D^b(\overline{\CA} \cap \L) \simeq D^b_{\L}(\overline{\CA}) \simeq D^b_{\L}(\CA)$.
\end{thm}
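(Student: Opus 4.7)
The proof is essentially a concatenation of the two equivalences already established in the excerpt, so the bulk of the work is in checking the hypotheses. The plan is as follows.

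First, I would verify that $\overline{\CA}$ qualifies as a thick subcategory of $\mathscr C$, so that \Cref{thm:derived-equivalence-main-abelian-category} applies with $\T = \overline{\CA}$. Closure under direct summands follows from axiom (R2) applied to any finite $\CA$-resolution of $A_1 \oplus A_2$, together with \Cref{A-dim} to conclude that the $n$-th syzygies split. The 2-out-of-3 property is immediate from \Cref{cor:inequalities-of-resolving-dim}: if two of the three dimensions $\dim_{\tiny \CA}(C'), \dim_{\tiny \CA}(C), \dim_{\tiny \CA}(C'')$ are finite, then so is the third.

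Second, invoking \Cref{thm:derived-equivalence-main-abelian-category} with $\T = \overline{\CA}$ and using the hypothesis that $\L$ has the SR property with respect to $\overline{\CA}$ gives the equivalence
\[
D^b(\overline{\CA} \cap \L) \;\simeq\; D^b_{\tiny \L}(\overline{\CA}).
\]

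Third, \Cref{thm:equivalence-of-Abar-and-A} applied to the resolving subcategory $\CA$ and the Serre subcategory $\L$ gives
\[
D^b_{\tiny \L}(\overline{\CA}) \;\simeq\; D^b_{\tiny \L}(\CA).
\]
Composing these two equivalences yields the claim.

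Since both ingredients are already in place, there is no real obstacle here; the only point worth being careful about is that the definition of $\overline{\CA}$ as given in the preceding section refers to it as the smallest thick subcategory containing $\CA$, but we must actually confirm thickness for \Cref{thm:derived-equivalence-main-abelian-category} to be applied verbatim. As indicated above, this verification is routine via \Cref{cor:inequalities-of-resolving-dim} and \Cref{A-dim}, so the theorem follows by simply chaining the two previously established equivalences.
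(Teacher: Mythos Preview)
Your proposal is correct and follows exactly the paper's approach: the paper simply notes that $\overline{\CA}$ is thick (this is asserted in the definition of $\overline{\CA}$ at the start of the section) and then invokes \Cref{thm:derived-equivalence-main-abelian-category} with $\T=\overline{\CA}$ for the first equivalence and \Cref{thm:equivalence-of-Abar-and-A} for the second. Your extra verification of thickness via \Cref{cor:inequalities-of-resolving-dim} and \Cref{A-dim} is a welcome bit of care but not a departure from the paper's argument.
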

For $g \in \N$, let $\overline{\CA}^{\leq g}$ be the full subcategory of $\C$ consisting of objects $\{C \in \mathscr C \mid \dim_{\tiny \CA}(C)\leq g\}$. Then $\overline{\CA}^{\leq g}$ is an exact subcategory of $\mathscr C$ and $\overline{\CA}^{\leq g} \cap \L \to \overline{\CA} \cap \L$ is the natural inclusion of exact categories. The following lemma provides a criterion for determining when the induced map on their bounded derived categories is an equivalence.

\begin{lemma}\cite[Section 1.5]{Keller}\label{lem:Keller-lemma}
    Let $\mathscr{E}_1$ and $\mathscr{E}_2$ be full exact subcategories of $\C$ such that $\mathscr{E}_1 \subseteq \mathscr{E}_2$. Consider the conditions 
    \begin{enumerate}[(a)]
        \item For each $B \in \mathscr E_2$, there is an acyclic chain complex of $\mathscr{E}_2$
        $$0 \to B \to A_0 \to A_1 \to \cdots \to A_n \to 0$$
        where each $A_i \in \mathscr E_1$.
        \item For each short exact sequence $0 \to B' \to B \to B'' \to 0$ in $\mathscr E_2$ with $B' \in \mathscr E_1$, there is a commutative diagram
        $$\xymatrix{ 0 \ar[r] & B' \ar[r]^{} \ar@<-.2ex>@{-}[d]\ar@<.2ex>@{-}[d] & B \ar[d]^{} \ar[r] & B'' \ar[d] \ar[r] & 0 \\ 0 \ar[r] & B' \ar[r] & A \ar[r] & A'' \ar[r] & 0}$$
        where the second row is exact in $\mathscr{E}_1$. 
    \end{enumerate}
    If conditions $(a)$ (or its dual) and $(b)$ (or its dual) hold, then $D^b(\mathscr{E}_1) \simeq D^b(\mathscr{E}_2)$.
\end{lemma}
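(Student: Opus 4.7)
The plan is to establish that the natural inclusion of exact categories induces a functor $\iota \colon D^b(\mathscr{E}_1) \to D^b(\mathscr{E}_2)$ which is both essentially surjective and fully faithful. Both properties will follow from the ability to replace, up to quasi-isomorphism, any bounded complex in $\mathscr{E}_2$ by a bounded complex in $\mathscr{E}_1$ in a sufficiently controlled way, using conditions (a) and (b).

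For essential surjectivity, given $B_\bullet \in Ch^b(\mathscr{E}_2)$, I would first apply condition (a) to each $B_i$, obtaining finite right resolutions $0 \to B_i \to A_i^0 \to \cdots \to A_i^{k_i} \to 0$ with $A_i^j \in \mathscr{E}_1$. Then, using a horseshoe/lifting argument (iteratively applying (a) to successive cokernels and invoking closure of $\mathscr{E}_2$ under extensions), I would patch these into a bounded double complex augmented by $B_\bullet$. After truncating uniformly at $k = \max_i k_i$ (using that all $k_i$ are finite and the range of $i$ is finite), the associated total complex $A_\bullet$ lies in $Ch^b(\mathscr{E}_1)$ and the natural augmentation $B_\bullet \to A_\bullet$ is a quasi-isomorphism in $Ch^b(\mathscr{E}_2)$. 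This is parallel in spirit to the double complex construction in the proof of \Cref{thm:equivalence-of-Abar-and-A}.

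For fully faithfulness, any morphism $X_\bullet \to Y_\bullet$ in $D^b(\mathscr{E}_2)$ between complexes in $Ch^b(\mathscr{E}_1)$ is represented by a roof $X_\bullet \xleftarrow{s} Z_\bullet \xrightarrow{g} Y_\bullet$ with $s$ a quasi-isomorphism in $Ch^b(\mathscr{E}_2)$. I would then use condition (b) iteratively to improve $Z_\bullet$: at each term-wise short exact sequence arising from stupid/soft truncations of $Z_\bullet$ with an $\mathscr{E}_1$-term already available on the left, condition (b) produces a morphism into a short exact sequence entirely in $\mathscr{E}_1$ fixing that $\mathscr{E}_1$-term. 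Combined with the essential surjectivity construction applied to subcomplexes, this yields a quasi-isomorphism $Z'_\bullet \to Z_\bullet$ with $Z'_\bullet \in Ch^b(\mathscr{E}_1)$, so $f$ lifts to a genuine morphism in $D^b(\mathscr{E}_1)$. Running the same replacement argument for null-homotopies (i.e., for roofs representing the zero morphism) yields injectivity on Hom-sets.

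The main obstacle will be in the fully faithful step, specifically in orchestrating successive applications of (b) compatibly with the differentials of $Z_\bullet$, so that the replacement $Z'_\bullet$ is genuinely a complex and the comparison morphism is genuinely a quasi-isomorphism. Conceptually, this is what the dualities in the hypothesis allow for; the cleanest route, which is the one Keller takes, is to recognize that conditions (a) and (b) (or their duals) are exactly the criteria for the inclusion $\mathscr{E}_1 \hookrightarrow \mathscr{E}_2$ to be a left (respectively right) filtering exact functor, whereupon the derived equivalence is a formal consequence of the localization theory for derived categories of exact categories.
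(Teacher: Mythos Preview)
The paper does not provide its own proof of this lemma; it is simply cited from \cite[Section 1.5]{Keller} and used as a black box in the proof of \Cref{thm:derived-eq-keller-abelian-category}. Your sketch is a reasonable outline of how the argument runs, and you correctly identify at the end that Keller's actual route is to package conditions (a) and (b) as the cofinality/filtering conditions on the inclusion $\mathscr{E}_1 \hookrightarrow \mathscr{E}_2$, from which the derived equivalence follows formally. Since there is no in-paper proof to compare against, there is nothing further to check here.
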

\begin{prop}\label{thm:derived-eq-keller-abelian-category}
    Let $g \in \N$. Suppose a resolving subcategory $\CA$ of $\mathscr C$ has the property that for each $C \in \overline{\CA} \cap \L$, there exist $C' \in \overline{\CA}^{\leq g} \cap \L$ and an epimorphism $C' \to C$ in $\C$. Then 
    $$D^b(\overline{\CA}^{\leq g} \cap \L) \simeq D^b(\overline{\CA} \cap \L).$$
\end{prop}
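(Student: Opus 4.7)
The plan is to verify that the inclusion $\mathscr{E}_1 := \overline{\CA}^{\leq g} \cap \L \hookrightarrow \overline{\CA} \cap \L =: \mathscr{E}_2$ of exact categories satisfies the dual forms of conditions (a) and (b) of \Cref{lem:Keller-lemma}, from which the derived equivalence will follow at once. Since $\overline{\CA}^{\leq g}$ is already noted to be an exact subcategory of $\C$ and $\L$ is Serre, closure of $\mathscr{E}_1$ under extensions inside $\mathscr{E}_2$ is immediate from \Cref{cor:inequalities-of-resolving-dim}(2).

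For the dual of (a), given $B \in \mathscr{E}_2$, I will build a left resolution of $B$ by objects of $\mathscr{E}_1$, proceeding by induction on $\max\{0, \dim_{\tiny \CA}(B) - g\}$. If $\dim_{\tiny \CA}(B) \leq g$, then $B \in \mathscr{E}_1$ already and the trivial resolution $0 \to B \to B \to 0$ suffices. Otherwise, the hypothesis supplies an epimorphism $A_0 \twoheadrightarrow B$ with $A_0 \in \mathscr{E}_1$. Setting $K = \ker(A_0 \to B)$, the Serre property of $\L$ gives $K \in \L$, and \Cref{cor:inequalities-of-resolving-dim}(1) yields $\dim_{\tiny \CA}(K) \leq \max\{\dim_{\tiny \CA}(A_0),\dim_{\tiny \CA}(B)-1\} \leq \max\{g,\dim_{\tiny \CA}(B)-1\} = \dim_{\tiny \CA}(B)-1$. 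Hence $K \in \mathscr{E}_2$ with strictly smaller induction parameter, and splicing the resolution of $K$ furnished by the induction hypothesis with $0 \to K \to A_0 \to B \to 0$ finishes the construction.

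For the dual of (b), given a short exact sequence $0 \to B' \to B \xrightarrow{p} B'' \to 0$ in $\mathscr{E}_2$ with $B'' \in \mathscr{E}_1$, I will use the hypothesis to choose an epimorphism $A \twoheadrightarrow B$ with $A \in \mathscr{E}_1$, and set $A' = \ker(A \twoheadrightarrow B \xrightarrow{p} B'')$. Then $A' \in \L$ by the Serre property, and \Cref{cor:inequalities-of-resolving-dim}(1) bounds $\dim_{\tiny \CA}(A') \leq \max\{\dim_{\tiny \CA}(A),\dim_{\tiny \CA}(B'')-1\} \leq \max\{g,g-1\} = g$, so $A' \in \mathscr{E}_1$. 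Since the composite $A \to B''$ vanishes on $A'$, the universal property of $B' = \ker p$ produces a map $A' \to B'$ fitting into the required commutative square whose top row $0 \to A' \to A \to B'' \to 0$ lies in $\mathscr{E}_1$.

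I do not anticipate a real obstacle: both verifications reduce to the dimension inequality of \Cref{cor:inequalities-of-resolving-dim}(1) together with the Serre property of $\L$, and termination in the dual of (a) is controlled by the strict decrease of $\max\{0,\dim_{\tiny \CA}(B)-g\}$ at each step. The mildly delicate point is ensuring that the iterated kernels remain simultaneously in $\L$ and in $\overline{\CA}$ — both of which are built into the two inputs just cited.
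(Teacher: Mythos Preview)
Your proposal is correct and follows essentially the same route as the paper: both verify the dual forms of conditions (a) and (b) in \Cref{lem:Keller-lemma}, using the hypothesis together with \Cref{cor:inequalities-of-resolving-dim}(1) and the Serre property of $\L$ to control the kernels. The paper phrases the dual of (b) by saying $\overline{\CA}^{\leq g}\cap\L$ is closed under kernels of epimorphisms, but this is exactly your explicit dimension bound.
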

\begin{proof}
    We show that the dual statements of $(a)$ and $(b)$ in \Cref{lem:Keller-lemma} hold for the inclusion functor $\overline{\CA}^{\leq g} \cap \L \subseteq \overline{\CA} \cap \L$. We first show that the dual of $(a)$ holds. Let $C \in \overline{\CA} \cap \L$. If $\dim_{\tiny \CA}(C) \leq g$, then there is nothing to prove. Suppose $\dim_{\tiny \CA}(C) > g$. Then by assumption, there exist $C_0 \in \overline{\CA}^{\leq g} \cap \L$ and an epimorphism $C_0 \to C$ in $\C$. Let $K_0 = \ker(C_0 \to C)$. Since $\L$ is a Serre subcategory, $K_0 \in \L$. By using \Cref{cor:inequalities-of-resolving-dim}, $\dim_{\tiny \CA}(K_0)<\dim_{\tiny \CA}(C)$. Doing this process $n = (\dim_{\tiny \CA}(C) - g)$-times, $K_{n-1} \in \overline{\CA}^{\leq g} \cap \L$, we obtain the required acyclic chain complex $$0 \to K_{n-1} \to C_{n-1} \to \cdots \to C_1 \to  C_0 \to C \to 0.$$
    To show the dual of (b), let $0 \to C' \xrightarrow{f} C \xrightarrow{g} C'' \to 0$ be a short exact sequence in $\overline{\CA} \cap \L$ with $C'' \in \overline{\CA}^{\leq g} \cap \L$. By hypothesis, there exists $D \in \overline{\CA}^{\leq g} \cap \L$ such that $D \xrightarrow{h} C$ is an epimorphism in $\C$ and let $D' = \ker(g \circ h)$. Since $\overline{\CA}^{\leq g} \cap \L$ is closed under kernels of epimorphisms, $D' \in \overline{\CA}^{\leq g} \cap \L$. Thus, we have the following commutative diagram 
$$\xymatrix{ 0 \ar[r] & D' \ar[r]^{} \ar[d] & D \ar[d]^{h} \ar[r]^{g \circ h} & C'' \ar@<-.2ex>@{-}[d]\ar@<.2ex>@{-}[d] \ar[r] & 0 \\ 0 \ar[r] & C' \ar[r]^{f} & C \ar[r]^{g} & C'' \ar[r] & 0}$$
    where the first row is a short exact sequence in $\overline{\CA}^{\leq g} \cap \L$. Hence, the proof is complete.
\end{proof}

Combining \Cref{thm:derived-equivalence-AbarcapL-AbarsuppL-AsuppL} and \Cref{thm:derived-eq-keller-abelian-category} yields the following result:
\begin{thm}\label{thm:derived-equivalence-for-abelian-category-resolving-sub}
    Let $\L$ be a Serre subcategory and $\CA$ a resolving subcategory of $\C$. 
    Suppose $\L$ has the SR property with respect to $\overline{\CA}$, and for each $C \in \overline{\CA} \cap \L$, there exist $C' \in \overline{\CA}^{\leq g} \cap \L$ and an epimorphism $C' \to C$ in $\C$ for some $g \in \N$. Then  $$D^b(\overline{\CA}^{\leq g} \cap \L) \simeq D^b(\overline{\CA} \cap \L) \simeq D^b_{\L}({\overline{\CA}}) \simeq D^b_{\L}({\CA}).$$
\end{thm}
\begin{rmk}
    Let $\L$ and $\CA$ be as in \Cref{thm:derived-equivalence-for-abelian-category-resolving-sub}. Further assume that $\C$ has enough projectives and $\CA$ contains all projective objects of $\C$. Then it follows from \Cref{thm:derived-equivalence-for-abelian-category-resolving-sub} and \Cref{cor:fully-faithful-thick}.
    that all the functors below are fully faithful : $$D^b(\overline{\CA}^{\leq g} \cap \L) \xrightarrow{\sim} D^b(\overline{\CA} \cap \L)\xrightarrow{\sim} D^b_{\L}(\overline{\CA}) \to D^b(\overline{\CA}) \to D^b({\C}) \to D(\C).$$
\end{rmk}

\section{The derived equivalences for $\starmods$}\label{sec: der equiv graded}
    Throughout this section, let $S = \Oplus_{i \in \mathbb Z}S_i$ be a graded ring, which is also noetherian, that is, every ascending chain of ideals stabilizes. Note that the noetherian hypothesis is necessary since it is crucially used in \cite{Foxby-Halvorsen}, \cite{SandersSane}, and \cite{GanapathySane}, all of which we require in our proofs. The main goal of this section is to show that there is a large collection of Serre subcategories and thick subcategories of $\starmods$ satisfying the hypotheses of the theorems in Sections \ref{sec:derived-equivalence-for-abelian-category} and \ref{sec:resolving-der-eq}, and hence yielding the derived equivalences proved in those sections. 
    \begin{defn}\label{defn:equivalent-filtrations-ideals}
    Let $\{ I_t \}_{t \in \tiny \N}$ and $\{ {J}_t \}_{t \in \tiny \N}$ be filtrations of ideals in $S$. We say the filtrations $\{ I_t \}_{t \in \tiny \N}$ and $\{ {J}_t \}_{t \in \tiny \N}$ are \emph{equivalent} if for each $t \in \N$ there exists $s$ such that ${J}_s \subseteq I_t$ and for each $s \in \N$, there exists $t$ such that $I_t \subseteq {J}_s$.
\end{defn}
 For the rest of the section, let $\CA$ denote the resolving subcategory of $\starmods$ and $\mathscr T$ be a thick subcategory containing $\CA$.
\begin{defn}\label{defn:efpd-graded-setup}
    Let $V \subseteq \proj(S)$ be a closed set. We say that $V$ has \emph{eventually finite $\CA$-dimension} if there exists a filtration of homogeneous ideals $\{I_t\}$ which is equivalent to $\{I_1^s\}$ such that for each $t \in \N$, $\dim_{\tiny \CA}(I_t)<\infty$ and $\Supp_S(S/I_t) = V$.
\end{defn}
\begin{rmk}
   An ideal in a commutative noetherian ring having eventually finite projective dimension (efpd) has been defined in \cite[Definition 2.14]{GanapathySane}. Since efpd is a property up to radical, in this article we define eventually finite $\CA$-dimension for a closed set instead of an ideal.
\end{rmk}
We now consider resolving subcategories $\CA$ of $\starmods$ which contain the object $S$ and are closed under shifts. Note that any such $\CA$ will contain $\starf(S)$.
\begin{thm}\label{thm:graded-efpd-implies-SR-prop}
    Let $I \subseteq S$ be a homogeneous ideal, $\L = \L_{V(I)}$, and $\T$ a thick subcategory of $\starmods$. Suppose $\T$ contains a resolving subcategory $\CA$ of $\starmods$ which contains $S$ and is closed under shifts, and $V(I)$ has eventually finite $\CA$-dimension. Then $\L$ has the SR property with respect to $\T$.
\end{thm}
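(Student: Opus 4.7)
The plan is to produce $(T_\bullet, \alpha)$ meeting the four conditions of \Cref{defn:SR-property-for-an-abelian-category}. After shifting, I assume $\min(X_\bullet) = 0$. First I apply \Cref{lem:Cartan-Eilenberg} to obtain a quasi-isomorphism $\pi\colon P_\bullet \to X_\bullet$ with $P_\bullet \in Ch^+(\starf(S))$ and $\min_c(P_\bullet) = 0$. I then apply \Cref{lem:Tate-to-bdd-above-complex} to $P_\bullet$ together with the composite $f \circ \pi_0\colon P_0 \to Q$, producing a homogeneous ideal $J = (\tildes^u) \subseteq I$ with $\sqrt{J} = \sqrt{I}$, a complex $U_\bullet \in Ch^+(\starf(S))$ with $U_0 = P_0$, and a graded chain map $\psi\colon U_\bullet \to P_\bullet$ with $\psi_0$ the natural isomorphism, $H_0(U_\bullet) = S/J \otimes_S P_0$, $H_i(U_\bullet) = 0$ for $i \neq 0$, and $f \circ \pi_0 \circ \psi_0$ factoring through $H_0(U_\bullet)$.

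Since $X_\bullet$ is bounded, its finitely many homology modules are jointly annihilated by some power $I^N$. Using the filtration equivalence $\{I_t\} \sim \{I_1^s\}$ together with $\sqrt{I_1} = \sqrt{I} = \sqrt{J}$, I choose $t$ (and possibly enlarge $u$ in the previous step, shrinking $J$) so that $I_t \subseteq I^N \cap J$ and $\dim_{\CA}(I_t) < \infty$. Then \Cref{cor:inequalities-of-resolving-dim} applied to $0 \to I_t \to S \to S/I_t \to 0$, combined with $S \in \CA$, yields $\dim_{\CA}(S/I_t) < \infty$. I fix a finite $\CA$-resolution $A_\bullet$ of $S/I_t$ starting with $A_0 = S$, and define $T_\bullet := A_\bullet \otimes_S P_0$. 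Since $\CA$ is closed under shifts and finite direct sums, $P_0 \in \starf(S)$ is flat, and $A_\bullet$ has finite length, $T_\bullet$ is a bounded complex with terms in $\CA \subseteq \T$, with $T_0 = P_0$, $H_0(T_\bullet) = S/I_t \otimes_S P_0 \in \L$, and $H_i(T_\bullet) = 0$ for $i \geq 1$. This establishes $T_\bullet \in Ch^b_{\tiny \L}(\T)$, $\min_c(T_\bullet) = 0$, and $\supph(T_\bullet) = \{0\}$, securing conditions (1) and (2) of the SR property.

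For the chain map $\alpha = \pi \circ \beta$, I construct $\beta\colon T_\bullet \to P_\bullet$ as follows. Using \Cref{rmk:resolving-subcategories}(2) on the surjection $S/I_t \otimes_S P_0 \twoheadrightarrow S/J \otimes_S P_0 = H_0(U_\bullet)$ and the $\CA$-resolution $U_\bullet$, I obtain an $\CA$-resolution of $S/I_t \otimes_S P_0$ with a chain map into $U_\bullet$; post-composing with $\psi$ and truncating using the finite $\CA$-dimension (via \Cref{A-dim}) delivers a chain map $\beta\colon T_\bullet \to P_\bullet$ with $\beta_0 = \mathrm{id}_{P_0}$. Taking $\alpha_0 = \pi_0$, condition (3) follows because $\pi_0$ induces the natural surjection $P_0 \twoheadrightarrow H_0(P_\bullet) \cong H_0(X_\bullet)$, which descends through $P_0/I_t P_0 = H_0(T_\bullet)$ as $I_t$ annihilates $H_0(X_\bullet)$; condition (4) follows from $(f \circ \pi_0)(I_t P_0) \subseteq I_t Q = 0$ since $I_t \subseteq J \subseteq \Ann(Q)$.

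The main obstacle is the construction of $\beta$ in the last paragraph: unlike the projective setting of \cite{GanapathySane}, the top term of the finite $\CA$-resolution $A_\bullet \otimes_S P_0$ need not be projective, so the standard inductive lifting of a chain map into $U_\bullet$ or $P_\bullet$ can fail at the top degree. The resolution of this difficulty requires the resolving-subcategory machinery of \Cref{sec:resolving-subcategories}, combined with the fact (implicit in the proof of \Cref{lem:Tate-to-bdd-above-complex}) that $\psi$ factors through a bounded Koszul complex via \Cref{lem:graded-tate-to-koszul}; this factorization is what allows one to reconcile an infinite $\CA$-resolution produced by \Cref{rmk:resolving-subcategories}(2) with our chosen bounded $T_\bullet$.
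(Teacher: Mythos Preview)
Your overall strategy matches the paper's proof: reduce to $m=0$, pass to a $\starf(S)$-resolution $P_\bullet$ via Cartan--Eilenberg, invoke \Cref{lem:Tate-to-bdd-above-complex} to obtain $\psi\colon U_\bullet \to P_\bullet$ with $H_0(U_\bullet)=S/J\otimes P_0$, then use the eventually-finite-$\CA$-dimension hypothesis to replace $J$ by some $I'\subseteq J$ with $\dim_{\CA}(S/I')<\infty$ and build a finite $\CA$-resolution mapping into $U_\bullet$.

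The gap is in your construction of $\beta$. You first \emph{fix} $T_\bullet = A_\bullet \otimes_S P_0$ for a chosen finite resolution $A_\bullet$ of $S/I_t$, and then try to produce a chain map $\beta\colon T_\bullet \to P_\bullet$ with $\beta_0=\mathrm{id}_{P_0}$. But \Cref{rmk:resolving-subcategories}(2) does not give a map out of a \emph{prescribed} resolution; it constructs a new one (via pullbacks), whose zeroth term has no reason to be $P_0$. Your paragraph~3 silently conflates the two resolutions, and your paragraph~4 acknowledges the lifting problem but the proposed Koszul factorization does not actually produce a map from your specific $A_\bullet\otimes P_0$ either, since $A_1\otimes P_0$ is not projective and cannot be lifted along $P_1\twoheadrightarrow B_0(P_\bullet)$ in general.

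The fix is exactly what the paper does: do not insist on a particular $T_\bullet$ in advance. Apply \Cref{rmk:resolving-subcategories}(2) to the surjection $S/I'\otimes P_0 \twoheadrightarrow S/J\otimes P_0$ and the $\CA$-resolution $U_\bullet$ to obtain \emph{some} $\CA$-resolution $T'_\bullet$ of $S/I'\otimes P_0$ together with a chain map $\beta\colon T'_\bullet\to U_\bullet$; then soft-truncate $T'_\bullet$ at degree $\dim_{\CA}(S/I'\otimes P_0)$, which by \Cref{A-dim} gives a bounded complex $T_\bullet\in Ch^b(\CA)$. Conditions (3) and (4) then follow from the induced maps on $H_0$ (via $H_0(\beta)$ and the factorization in \Cref{lem:Tate-to-bdd-above-complex}(iv)), with no need for $\beta_0$ to be an isomorphism. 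Your extra requirement $I_t\subseteq I^N$ and the direct verification ``$I_t$ annihilates $H_0(X_\bullet)$'' become unnecessary once you argue this way.
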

\begin{proof}
    Let $X_\bullet \in Ch^b_{\tiny \L}(\starmods)$ which is not exact with $\min(X_\bullet) = m$ and a morphism $f \colon X_m \to Q$ with $Q \in \L$. Without loss of generality, we may assume $\min(X_\bullet) = 0$. By \Cref{lem:Cartan-Eilenberg}, there exists a (graded) quasi-isomorphism $\gamma \colon P_{\bullet} \to X_{\bullet}$ where $P_{\bullet} \in Ch^{+}_{\tiny \L}(\starf(S))$. Applying \Cref{lem:Tate-to-bdd-above-complex} to $P_\bullet$ and $\delta = (f \circ \gamma_0) \colon P_0 \to Q$, we get a chain complex map $\psi \colon U_\bullet \to P_\bullet$ in $Ch^{+}_{\tiny \L}(\starf(S))$ such that the properties $(i)-(v)$ in \Cref{lem:Tate-to-bdd-above-complex} holds. In particular, $(i)-(iii)$ show that $U_\bullet$ is a $\starf(S)$-resolution of $S/J \otimes_S P_0$ for some ideal $J$ with $\sqrt{I} = \sqrt{J}$. Since $V(I)$ has eventually finite $\CA$-dimension, there exists a homogeneous ideal $I'\subseteq J$ such that $\sqrt{I'} = \sqrt{J}$ and $\dim_{\tiny \CA}(I')<\infty$. Therefore, there exists $T_{\bullet} \in Ch^{b}_{\tiny \L}(\T)$, an $\CA$-resolution of $S/I' \otimes P_0$, along with a graded chain complex map $\beta \colon T_\bullet \to U_\bullet$ which lifts the natural surjection $S/I' \otimes P_0 \to S/J \otimes P_0$. Clearly, $(T_\bullet, \gamma \circ \psi \circ \beta)$ is a strong reducer of $(X_\bullet,f)$ and this completes the proof. 
\end{proof}
Recall that $W \subseteq \proj(S)$ is called a \emph{specialization closed} set if it is an arbitrary union of closed sets of $\proj(S)$. For a specialization closed set $W$, we denote $\L_W = \{ M \in \starmods \mid \Supp_S(M) \subseteq W\}$, which is a Serre subcategory of $\starmods$. Then the above theorem can be strengthened as follows: 
\begin{cor}\label{rmk:SRproperty-extension-graded} Let $\T$ and $\CA$ be as in \Cref{thm:graded-efpd-implies-SR-prop}. Suppose $W$ is a specialization closed set of $\proj(S)$ such that for each closed set $W'\subseteq W$, there exists a closed set $W'' \subseteq \proj(S)$ with $W' \subseteq W'' \subseteq W$ and $W''$ has eventually finite $\CA$-dimension. Then $\L_W$ has the SR property with respect to $\T$. 
\end{cor}
\begin{proof}
      Let $X_\bullet \in Ch^b_{\L_W}(\starmods)$ which is not exact with $\min(X_\bullet) = m$ and a morphism $f \colon X_m \to Q$ with $Q \in \L$. 
     Consider the closed set $W' = \cup_i \Supp_S(H_i(X_\bullet)) \cup \Supp_S(Q) \subseteq W$. By hypothesis, there exists a closed set $W''$ such that $W' \subseteq W'' \subseteq W$ and $W''$ has eventually finite $\CA$-dimension. By \Cref{thm:graded-efpd-implies-SR-prop}, $(X_\bullet,f)$ has a strong reducer with respect to $\L_{W''}$ and $\T$. Since $\L_{W''} \subseteq \L_{W}$, it follows that $(X_\bullet,f)$ has a strong reducer with respect to $\L_{W}$ and $\T$. 
\end{proof}
\begin{thm}\label{cor:derived-equiv-charp-finite-projdim}
    Let $V(I) \subseteq \proj(S)$ be a closed set for a homogeneous ideal $I \subseteq S$ and $\L = \L_{V(I)}$. Suppose $\ch(S) = p>0$ for a prime $p$ and $\pd_S(S/I) < \infty$. Then, for any resolving subcategory $\CA$ of $\starmods$ which contains $S$ and is closed under shifts, there are derived equivalences
    \begin{align*}
    D^b(\overline{\CA} \cap \L) \simeq D^b_{\tiny \L}(\overline{\CA}) \simeq D^b_{\tiny \L}(\CA).
\end{align*} 
\end{thm}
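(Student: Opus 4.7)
The plan is to verify the hypotheses of \Cref{thm:derived-equivalence-AbarcapL-AbarsuppL-AsuppL}, reducing matters to showing that $\L = \L_{V(I)}$ has the SR property with respect to $\overline{\CA}$. By \Cref{thm:graded-efpd-implies-SR-prop} applied with $\T = \overline{\CA}$, it then suffices to exhibit a filtration of homogeneous ideals witnessing that $V(I)$ has eventually finite $\CA$-dimension.

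The natural candidate is the filtration of Frobenius powers $\{I^{[p^n]}\}_{n \in \N}$, where $I^{[p^n]}$ is the ideal generated by the $p^n$-th powers of a fixed finite set of homogeneous generators $f_1,\dots,f_r$ of $I$. Clearly $I^{[p^n]} \subseteq I^{p^n}$, and a pigeonhole argument on monomial exponents gives $I^s \subseteq I^{[p^n]}$ whenever $s \geq r(p^n - 1) + 1$, so $\{I^{[p^n]}\}_n$ is equivalent to $\{I^s\}_s$ and hence to $\{(I^{[p]})^s\}_s$. The ideals $I^{[p^n]}$ are homogeneous, and $\sqrt{I^{[p^n]}} = \sqrt{I}$ gives $\Supp_S(S/I^{[p^n]}) = V(I)$.

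The essential input is the classical theorem of Peskine-Szpiro: for a commutative noetherian ring of characteristic $p>0$, finite projective dimension is preserved under Frobenius base change, yielding $\pd_S(S/I^{[p^n]}) \leq \pd_S(S/I) < \infty$ for all $n$. Invoking \Cref{rmk:mods-has-enough-shifts-of-S}(a) to identify graded and ungraded projective dimension, $S/I^{[p^n]}$ admits a finite graded free resolution; since $\CA$ contains $S$ and is closed under shifts, $\starf(S) \subseteq \CA$, so $\dim_{\tiny \CA}(S/I^{[p^n]}) < \infty$. Combining this with $S \in \CA$ and the short exact sequence $0 \to I^{[p^n]} \to S \to S/I^{[p^n]} \to 0$ via \Cref{cor:inequalities-of-resolving-dim}, we obtain $\dim_{\tiny \CA}(I^{[p^n]}) < \infty$. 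Hence $V(I)$ has eventually finite $\CA$-dimension, \Cref{thm:graded-efpd-implies-SR-prop} supplies the SR property, and \Cref{thm:derived-equivalence-AbarcapL-AbarsuppL-AsuppL} produces the claimed chain of equivalences.

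The main obstacle is the appeal to Peskine-Szpiro acyclicity controlling $\pd_S(S/I^{[p^n]})$; the characteristic-$p$ hypothesis enters the argument exclusively through this step, and everything else is an assembly of previously established machinery.
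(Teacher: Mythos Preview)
Your proof is correct and follows essentially the same route as the paper: use Peskine--Szpiro to show the Frobenius powers $\{I^{[p^e]}\}$ witness eventually finite $\CA$-dimension for $V(I)$, invoke \Cref{thm:graded-efpd-implies-SR-prop} with $\T = \overline{\CA}$ to obtain the SR property, and conclude via \Cref{thm:derived-equivalence-AbarcapL-AbarsuppL-AsuppL}. You supply more detail than the paper (the explicit pigeonhole bound for filtration equivalence, and the short exact sequence passage from $\dim_{\CA}(S/I^{[p^n]})$ to $\dim_{\CA}(I^{[p^n]})$), but the argument is the same.
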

\begin{proof}
    Since $\ch(S) = p>0$, it follows from \cite[Theorem 1.7]{Peskine_Szpiro_thesis} that $\pd_S(S/I) = \pd_S(S/I^{[p^e]})$ for all $e \in \N$, where $I^{[p^e]}$ denotes the $e$-th Frobenius power of $I$. Since $\{I^{[p^e]}\}_{e \in \tiny \N}$ is equivalent to $\{I^n\}_{n \in \tiny \N}$, $V(I)$ has eventually finite $\starp(S)$-dimension and hence eventually finite $\CA$-dimension. Therefore, $\L$ has the SR property with respect to $\overline{\CA}$ and thus, from \Cref{thm:derived-equivalence-AbarcapL-AbarsuppL-AsuppL}, we get the required equivalences.
\end{proof} 

\begin{defn}
    For a closed set $V = V(I)$ for an ideal $I \subseteq S$, let $$S(V) = \{ \{I_t\}_{t \in \tiny \N} \mid \{I_t\}_{t \in \tiny \N} \text{\ is\ equivalent\ to\ } \{I^n\}_{n \in \tiny \N}\}.$$ Note that $S(V)$ only depends on $V$ and does not depend on the choice of the ideal $I$. Now we define $$g(V,\CA) = \inf\limits_{\{I_t\} \in S(V)} \limsup\limits_{t \in \tiny \N} \{ \dim_{\tiny \CA}(S/I_t)  \}.$$
\end{defn}
\begin{rmk}\label{rmk:grade-leq-g}
    Recall that for any ideal $I \subseteq S$, $\grade(I) \leq \pd_S(S/I)$ and $\grade(I) = \grade(\sqrt{I})$. Therefore, $\grade(I) \leq \pd_S(S/I_t)$ for all ideals $I_t$ whose radical is $\sqrt{I}$. Hence, for $\grade(I) \leq g(V,\starp(S))$. 
\end{rmk}
\begin{thm}\label{thm:}
    Let $V \subseteq \proj(S)$ be a closed set, $\L = \L_{V}$, and $\CA$ a resolving subcategory of $\starmods$ which contains $S$ and is closed under shifts. Set $g = g(V,\CA)$. Then $$D^b(\overline{\CA}^{\leq g} \cap \L) \simeq D^b(\overline{\CA} \cap \L).$$
\end{thm}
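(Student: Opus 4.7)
\medskip

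\noindent\textbf{Proof Proposal.} The plan is to deduce this from \Cref{thm:derived-eq-keller-abelian-category} applied to the resolving subcategory $\CA$ and Serre subcategory $\L = \L_V$ of $\starmods$. Thus it suffices to verify its hypothesis: for every $C \in \overline{\CA} \cap \L$, I need to exhibit $C' \in \overline{\CA}^{\leq g} \cap \L$ and a (graded) epimorphism $C' \to C$ in $\starmods$. The case $g = \infty$ is vacuous, so I may assume $g < \infty$.

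Since $g = g(V,\CA)$ is the infimum of a set of values in $\N \cup \{\infty\}$ (the $\limsup$s, which are themselves integers or $\infty$ since $\dim_{\tiny \CA}$ is $\N \cup \{\infty\}$-valued), finiteness of $g$ guarantees the existence of a filtration $\{I_t^*\}_{t \in \N} \in S(V)$, equivalent to $\{I^n\}$ for any ideal $I$ with $V(I) = V$, such that $\limsup_t \dim_{\tiny \CA}(S/I_t^*) \leq g$. In particular, there exists $t_0$ with $\dim_{\tiny \CA}(S/I_t^*) \leq g$ for every $t \geq t_0$.

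Now let $C \in \overline{\CA} \cap \L$. Since $C$ is a finitely generated graded $S$-module with $\Supp_S(C) \subseteq V$, there exists $n$ with $I^n \cdot C = 0$. By the equivalence of filtrations, I can choose $t \geq t_0$ such that $I_t^* \subseteq I^n$, and so $I_t^* \cdot C = 0$. Then $C$ is a finitely generated graded $S/I_t^*$-module, and choosing homogeneous generators of $C$ of degrees $-n_1, \ldots, -n_k$ yields a graded epimorphism $C' \coloneqq \bigoplus_{i=1}^k S/I_t^*(n_i) \twoheadrightarrow C$ in $\starmods$.

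It remains to check that $C' \in \overline{\CA}^{\leq g} \cap \L$. Supported on $V(I_t^*) \subseteq V$, $C'$ lies in $\L$. For the $\CA$-dimension: since $\CA$ is closed under shifts, the shift of any $\CA$-resolution of $S/I_t^*$ is an $\CA$-resolution of $S/I_t^*(n_i)$, so $\dim_{\tiny \CA}(S/I_t^*(n_i)) = \dim_{\tiny \CA}(S/I_t^*) \leq g$; and since $\CA$ is closed under finite direct sums (by (R2)), the direct sum of these resolutions is an $\CA$-resolution of $C'$ of length at most $g$, giving $\dim_{\tiny \CA}(C') \leq g$. This verifies the hypothesis of \Cref{thm:derived-eq-keller-abelian-category} and completes the proof. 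The only step requiring care is the attainment of the infimum by some filtration up to a uniform tail bound, which is where the integrality of the $\limsup$ and the noetherian hypothesis (via \Cref{A-dim}) are used implicitly.
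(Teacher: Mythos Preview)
Your proof is correct and follows essentially the same approach as the paper: both reduce to verifying the hypothesis of \Cref{thm:derived-eq-keller-abelian-category} by producing, for each $C \in \overline{\CA} \cap \L$, a surjection from a finite direct sum $\bigoplus S/J(n_i)$ where $J$ is a term of a filtration realizing $g$ and annihilating $C$. Your write-up is slightly more explicit about why the infimum is attained and why shifts and direct sums preserve the bound $\dim_{\CA} \leq g$, but the argument is the same.
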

\begin{proof}
    If $g$ is infinite, there is nothing to prove. So we may assume $g<\infty$. Let $M \in \overline{\CA} \cap \L$. Now we show that there exist $N \in \overline{\CA}^{\leq g} \cap \L$ and a surjection $N \to M$ in $\starmods$. Then \Cref{thm:derived-eq-keller-abelian-category} yields the required equivalence of derived categories. Since $M \in \L$, we know $\Supp_S(M) \subseteq V$ and hence there exists an ideal $I \subseteq S$ such that $\Supp_S(S/I) = V$ and $I \subseteq \Ann_S(M)$. Thus we have a surjection $\Oplus_{i = 1}^n S/I(-m_i) \to M$ for some $m_i,n \in \N$. By definition of $g$, there exists an ideal $I_t \subseteq I$ such that $\dim_{\tiny \CA}(S/I_t) = g$. Therefore, we get a surjections $\Oplus_{i = 1}^n S/I_t(-m_i)  \to \Oplus_{i = 1}^n S/I(-m_i) \to M$. Taking $N = \Oplus_{i = 1}^n S/I_t(-m_i)$ proves the claim.
\end{proof}
Recall that a homogeneous ideal $I \subseteq S$ is said to be \emph{perfect} if $\grade(I) = \pd_S(S/I)$. The following theorem is about the full subcategory of perfect modules supported on $V(I)$ for a perfect ideal $I$, which is exactly the full subcategory $\starp(S)^{\leq \grade(I)} \cap \L_{V(I)}$ of $\starmods$.

\begin{thm}\label{thm:graded-der-eq-starp}
    Let $V(I) \subseteq \proj(S)$ be a closed set for a homogeneous ideal $I \subseteq S$ and $\L = \L_{V(I)}$. Suppose $\ch(S) = p>0$ for a prime $p$ and $I$ is a perfect ideal. Then, $$D^b(\overline{\starp(S)}^{\leq \grade(I)} \cap \L) \simeq D^b(\overline{\starp(S)} \cap \L) \simeq D^b_{\tiny \L}(\overline{\starp(S)}) \simeq D^b_{\tiny \L}(\starp(S)).$$
\end{thm}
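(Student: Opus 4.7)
The plan is to view this theorem as a direct instance of \Cref{thm:derived-equivalence-for-abelian-category-resolving-sub} applied with $\mathscr C = \starmods$, resolving subcategory $\CA = \starp(S)$, Serre subcategory $\L = \L_{V(I)}$, and $g = \grade(I)$. Concretely, I would verify its two hypotheses: (a) $\L$ has the SR property with respect to $\overline{\starp(S)}$, and (b) for every $C \in \overline{\starp(S)} \cap \L$ there exists $C' \in \overline{\starp(S)}^{\leq \grade(I)} \cap \L$ together with an epimorphism $C' \to C$ in $\starmods$. Once both are in place, \Cref{thm:derived-equivalence-for-abelian-category-resolving-sub} yields the full four-fold chain of equivalences.

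Hypothesis (a) is essentially \Cref{cor:derived-equiv-charp-finite-projdim}: since $I$ is perfect, $\pd_S(S/I) = \grade(I) < \infty$, and in characteristic $p$ the Peskine--Szpiro theorem gives $\pd_S(S/I^{[p^e]}) = \pd_S(S/I)$ for all $e$. The Frobenius filtration $\{I^{[p^e]}\}$ is equivalent to $\{I^n\}$, so $V(I)$ has eventually finite $\starp(S)$-dimension, and \Cref{thm:graded-efpd-implies-SR-prop} yields the SR property with respect to $\overline{\starp(S)}$.

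For hypothesis (b), I would first identify $g(V(I), \starp(S)) = \grade(I)$. The inequality $\grade(I) \leq g(V(I), \starp(S))$ is \Cref{rmk:grade-leq-g}. For the reverse inequality, perfectness gives $\pd_S(S/I) = \grade(I)$, and Peskine--Szpiro again yields $\pd_S(S/I^{[p^e]}) = \grade(I)$ for every $e$. Hence $\{I^{[p^e]}\} \in S(V(I))$ realizes $\limsup_e \dim_{\starp(S)}(S/I^{[p^e]}) = \grade(I)$, forcing $g(V(I), \starp(S)) \leq \grade(I)$. Applying the proposition immediately preceding this theorem (with $g = \grade(I)$) then produces the equivalence $D^b(\overline{\starp(S)}^{\leq \grade(I)} \cap \L) \simeq D^b(\overline{\starp(S)} \cap \L)$; moreover, its proof explicitly builds the epimorphism required by hypothesis (b), by replacing $S/I$ with $S/I^{[p^e]}$ for sufficiently large $e$ in any surjection $\bigoplus S/I(-m_i) \twoheadrightarrow C$ onto $C \in \overline{\starp(S)} \cap \L$.

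The only genuine bookkeeping step is the identification $g(V(I), \starp(S)) = \grade(I)$; once this is in hand, the theorem becomes a mechanical combination of \Cref{thm:derived-equivalence-for-abelian-category-resolving-sub}, \Cref{cor:derived-equiv-charp-finite-projdim}, and the preceding proposition on $g(V,\CA)$. I do not expect any substantive obstacle beyond invoking Peskine--Szpiro at two points and using the perfectness hypothesis to pin down $\pd_S(S/I)$ exactly as $\grade(I)$.
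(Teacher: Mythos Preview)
Your proposal is correct and matches the paper's proof essentially line for line: the paper obtains the second and third equivalences from \Cref{cor:derived-equiv-charp-finite-projdim}, then establishes $g(V,\starp(S)) = \grade(I)$ via the sandwich $\grade(I) \leq g(V,\starp(S)) \leq \pd_S(S/I) = \grade(I)$ using \Cref{rmk:grade-leq-g} and Peskine--Szpiro on the Frobenius filtration, and finally invokes the immediately preceding proposition (\Cref{thm:}) for the first equivalence. The only cosmetic difference is that you package the conclusion through \Cref{thm:derived-equivalence-for-abelian-category-resolving-sub}, whereas the paper cites its two constituent results separately.
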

\begin{proof}
 The second and third equivalences follow from \Cref{cor:derived-equiv-charp-finite-projdim}, since perfect ideals have finite projective dimension. Also, $\pd_S(S/I) = \pd_S(S/I^{[p^e]})$ for all $e \in \N$ implies that $g(V,\starp(S)) \leq \pd_S(S/I)$. Hence, we have the inequalities $$\grade(I) \leq g(V,\starp(S)) \leq \pd_S(S/I) = \grade(I)$$ where the first inequality follows from \Cref{rmk:grade-leq-g} and the last equality holds as $I$ is perfect. Therefore, $g(V,\starp(S)) = \grade(I)$ and we apply \Cref{thm:} to get the required result.
\end{proof}

\section{The derived equivalences for quasi-projective schemes}\label{sec: der equiv quasi proj}
In this section, $X$ is a quasi-projective scheme over a noetherian affine scheme and $\CA$ is a resolving subcategory of $Coh(X)$. For a closed subscheme $V \subseteq X$, the corresponding ideal sheaf is denoted by $\I_V$. Throughout, we fix an embedding of $X$ in a projective scheme $Y$. Set $\OX(n) \coloneqq \mathcal {O}_Y(n)|_{X}$. We say $\CA$ is closed under shifts, if for each $\mathcal F\in \CA$ and $n \in \Z$, $\mathcal F \otimes_{X} \OX(n) \in \CA$. Like the previous section, the main goal of this section is to show that the derived equivalences proved in Sections \ref{sec:derived-equivalence-for-abelian-category} and \ref{sec:resolving-der-eq} hold for certain Serre subcategories and thick subcategories of $Coh(X)$, by showing that they have the SR property.

Similar to the \Cref{defn:equivalent-filtrations-ideals}, we can define equivalent filtrations of ideal sheaves on a scheme $X$. 
\begin{defn}
    Let $\{ \I_t \}_{t \in \tiny \N}$ and $\{ \mathcal{J}_t \}_{t \in \tiny \N}$ be filtrations of ideal sheaves of $X$. We say the filtrations $\{ \I_t \}_{t \in \tiny \N}$ and $\{ \mathcal{J}_t \}_{t \in \tiny \N}$ are \emph{equivalent} if for each $t \in \N$ there exists $s$ such that $\mathcal{J}_s \subseteq \I_t$ and for each $s \in \N$, there exists $t$ such that $\I_t \subseteq \mathcal{J}_s$.
\end{defn}
\par For a closed set $V$ of $X$, let $\I_1$ and $\I_2$ be ideal sheaves of $\O_X$ defining closed subscheme structures on $V$. Note that for any affine open set $U$ of $X$, the radical ideals of $\I_1(U)$ and $\I_2(U)$ in $\O_X(U)$ are equal. Hence for any $t \in \N$, there exists $s \in \N$ such that $\I_2^s \subseteq \I_1^t$. In other words, the decreasing filtration of ideal sheaves $\{\I_1^t\}_{t \in \tiny \N}$ and $\{\I_2^t\}_{t \in \tiny \N}$ are equivalent.

\begin{defn}\label{defn:eventually-finite-A-dim}
    A closed set $V \subseteq X$ is said to have \emph{eventually finite $\CA$-dimension} if there exist ideal sheaves $\{\mathcal I_{t}\}_{t \in \N}$ of $\OX$ defining closed subscheme structures on $V$ such that \begin{enumerate}[(i)]
        \item  for any $s \in \N$, there exists $t$ for which $\mathcal I_{t} \subseteq \mathcal I^s_{1}$. \item\label{defn:ephd-condition2} $\dim_{\CA}(\mathcal I_{t}) < \infty$ for each $t \in \N$, that is, there is a finite resolution of $\mathcal I_{t}$ by objects in $\CA$.
    \end{enumerate}
\end{defn}
 In the above definition, since each $\I_t$ defines the closed subscheme structures on $V$, the condition $(i)$ is equivalent to the filtrations $\{\I_t\}_{t \in   \N}$ and $\{\I_1^s\}_{s \in   \N}$ being equivalent.
 \begin{rmk}
      Consider a projective scheme $Y = \proj(S)$ over a noetherian affine scheme, and $X \subseteq Y$ be open. 
      Any coherent sheaf $\F$ on $X$ can be extended to a coherent sheaf $\F'$ on $Y$ whose restriction to $X$ is $\F$ using \Cref{lem:Hartshorne-extension-of-sheaves}. By \cite[Cor 5.18]{Hartshorne}, there exists a surjection $\Oplus_{\text{finite}}\O_Y(n_i) \to \F'$ for some integers $n_i$. Since the restriction of a locally free sheaf on $Y$ to $X$ is again locally free on $X$, it follows that $Coh(X)$ has enough locally free sheaves on $X$. Therefore, whenever $X$ is a quasi-projective scheme over a noetherian affine scheme, $\V(X)$ is a resolving subcategory of $Coh(X)$.
  \end{rmk}

For $\F \in Coh(X)$ and $\CA = \V(X)$, we call $\dim_{  \CA}(\F)$ as the \emph{homological dimension} of $\F$, denoted by $\hd(\F)$. Note that for any $\F \in Coh(X)$, the homological dimension of $\F$ is at most $n$ iff $\sheafext^i(\F,\G) = 0$ for all $i >n$ and all sheaves of $\OX$-modules $\G$ (refer \cite[Ch 3, Ex. 6.5]{Hartshorne}). Therefore, the homological dimension can be computed locally as follows: $\hd(\F) = \sup_{x \in X}{\pd_{ \O_{X,x}}(\F_x)}$.
\begin{rmk}
    Note that any module over an isolated singularity has finite homological dimension over the punctured spectrum, but may not have finite homological dimension globally, e.g., the ideal $(X)$ in $\mathbb{C}[[X,Y]]/(XY)$. This shows that an ideal sheaf $\I$ in $X$ having finite $\V(X)$-dimension is a weaker property than the ideal sheaf $\I$ extended to the projective scheme $Y$ having finite $\V(Y)$-dimension. 
\end{rmk}
The following classes of closed sets have eventually finite homological dimension.
\begin{eg}\label{eg:efhd-1}
    \begin{enumerate}[(a)]
        \item If every point in a closed set $V$ is regular (i.e., $\O_{X,x}$ is a regular local ring for each $x \in V$), then $V$ has eventually finite $\V(X)$-dimension.
        \item\label{set-theoretic-CI} Let $S = \Oplus_{i \geq 0}$ be a graded ring which is finitely generated by $S_1$ as an $S_0$ algebra, $Y = \proj(S)$ and $X \subseteq Y$ open. Suppose $V = V(f_1,\dots,f_d) \cap X$ for homogeneous elements $f_1,\dots,f_d \in S$ and the restriction of $\stark_{\bullet}(f_1,\dots,f_d;R)$ to $X$ has no non-zero homologies in degree bigger than zero. Since taking powers of a regular sequence is again a regular sequence, it follows that $V$ has finite $\V(X)$-dimension.
    \end{enumerate}
\end{eg}
The following lemma connects the Cohen-Macaulay property for a local ring with the maximal ideal having eventually finite homological dimension and will be used later.
\begin{lemma}\cite[Corollary 5.4]{GanapathySane}\label{lem:CM-char-efpd}
    Let $(A,\m)$ be a noetherian local ring. Then $A$ is Cohen-Macaulay if and only if there exists a filtration of ideals $\{ I_t\}_{t \in   \N}$ such that the filtrations $\{ I_t\}$ and $\{\m^s\}$ are equivalent, and for each $t \in \N$, $I_t$ has finite projective dimension over $A$.
\end{lemma}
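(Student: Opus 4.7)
The statement characterizes Cohen--Macaulayness of the noetherian local ring $(A,\m)$ via existence of a filtration of $\m$-primary ideals of finite projective dimension, cofinal with the powers of $\m$. I will prove both directions; the forward direction is a direct construction via a system of parameters, while the backward direction combines the Auslander--Buchsbaum formula with a length-vs-dimension input.

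For the forward implication, assume $A$ is Cohen--Macaulay of dimension $d$ and pick a system of parameters $x_1,\ldots,x_d \in \m$, which is automatically a regular sequence. Set $I_t := (x_1^t,\ldots,x_d^t)$. Since powers of a regular sequence remain regular, $\Kos(x_1^t,\ldots,x_d^t;A)$ is a finite free resolution of $A/I_t$ of length $d$, whence $\pd_A(A/I_t) = d$ and, via the short exact sequence $0 \to I_t \to A \to A/I_t \to 0$, also $\pd_A(I_t) = d-1 < \infty$. Equivalence of $\{I_t\}$ with $\{\m^s\}$ follows from $I_t \subseteq \m^t$ together with the observation that, choosing $k$ with $\m^k \subseteq (x_1,\ldots,x_d)$, a pigeonhole argument on products of $td$ generators gives $(x_1,\ldots,x_d)^{td} \subseteq I_t$, hence $\m^{ktd} \subseteq I_t$.

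For the backward implication, assume such a filtration $\{I_t\}$ exists and fix any $t$. The equivalence with $\{\m^s\}$ forces $I_t$ to be $\m$-primary, so $A/I_t$ has finite length and $\operatorname{depth}(A/I_t) = 0$. From $\pd_A(I_t) < \infty$ we obtain $\pd_A(A/I_t) < \infty$, and the Auslander--Buchsbaum formula yields
$$\pd_A(A/I_t) \;=\; \operatorname{depth}(A) - \operatorname{depth}(A/I_t) \;=\; \operatorname{depth}(A).$$
The New Intersection Theorem of Peskine--Szpiro--Roberts, applied to a minimal finite free resolution of the finite length module $A/I_t$, yields $\pd_A(A/I_t) \geq \dim(A)$. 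Combining these, $\operatorname{depth}(A) \geq \dim(A)$, and with the general inequality $\operatorname{depth}(A) \leq \dim(A)$ we conclude $\operatorname{depth}(A) = \dim(A)$, i.e., $A$ is Cohen--Macaulay.

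The main obstacle is the appeal to the New Intersection Theorem in the backward direction, which is a deep result in commutative algebra. A more self-contained route, closer in spirit to the methods of \cite{GanapathySane}, would exploit the authors' earlier results connecting Koszul complexes on powers of a sequence with Tate resolutions to control projective dimensions along cofinal filtrations directly, thereby deducing Cohen--Macaulayness without explicit reliance on the New Intersection Theorem.
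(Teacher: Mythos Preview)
The paper does not prove this lemma at all: it is stated with the citation \cite[Corollary 5.4]{GanapathySane} and no argument is given. The sentence immediately preceding the lemma remarks that it ``can also be deduced from the intersection theorem,'' which is precisely the route you take for the converse. So your proposal is not being compared against a proof in this paper but rather supplies one where the paper defers to an external reference.

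Your argument is essentially correct. One small quibble: in the backward direction you write ``fix any $t$,'' but the equivalence of filtrations only guarantees $I_t \subseteq \m$ for $t$ sufficiently large (for each $s$ there exists $t$ with $I_t \subseteq \m^s$, so in particular some $I_{t_0} \subseteq \m$, and then $I_t \subseteq \m$ for $t \geq t_0$ by the filtration being decreasing). For small $t$ one might a priori have $I_t = A$. This is harmless---just replace ``any $t$'' by ``$t$ large enough.''

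Your closing paragraph speculates that the original proof in \cite{GanapathySane} may avoid the New Intersection Theorem via Tate/Koszul methods. Since the present paper explicitly flags the intersection theorem as a valid route and offers no alternative, your use of it is entirely in line with what the authors indicate here; whether \cite{GanapathySane} itself argues differently is outside the scope of this paper.
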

The scheme-theoretic version of the above lemma is the following.
\begin{lemma}
    Let $X$ be any noetherian scheme. Then, $X$ is a Cohen-Macaulay scheme if and only if every closed point has eventually finite $\V(X)$-dimension.
\end{lemma}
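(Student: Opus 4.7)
The plan is to reduce the global Cohen--Macaulay condition to its stalk-local content at closed points, and then invoke the preceding (affine-local) lemma. Cohen--Macaulayness passes to localizations, and in a noetherian scheme every point $y$ specializes to a closed point $x$ (every non-empty closed set of a noetherian topological space contains a closed point), so $\O_{X,y}$ is a localization of some $\O_{X,x}$ with $x$ closed. It therefore suffices to prove that, for each closed point $x$, the ring $\O_{X,x}$ is Cohen--Macaulay if and only if $x$ has eventually finite $\V(X)$-dimension.

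For the reverse direction, I would take stalks at $x$ of the given sheaves $\{\I_t\}$, obtaining ideals $I_t := (\I_t)_x \subseteq \O_{X,x}$. Since each $\I_t$ defines a closed subscheme structure on $\{x\}$, the quotient $\O_X/\I_t$ is supported at $x$, which forces $I_t$ to be $\m_x$-primary. The equivalence $\{\I_t\} \sim \{\I_1^s\}$ localizes to $\{I_t\} \sim \{I_1^s\}$, and since $I_1$ is $\m_x$-primary we further obtain $\{I_t\} \sim \{\m_x^s\}$. Finally, $\hd(\I_t) < \infty$ implies $\pd_{\O_{X,x}}(I_t) < \infty$, because the homological dimension dominates the projective dimension at each stalk. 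The preceding lemma then shows $\O_{X,x}$ is Cohen--Macaulay.

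For the forward direction, fix a closed point $x$ and an affine open $U = Spec(A) \ni x$ with $\m \subset A$ the maximal ideal corresponding to $x$. Since $A_\m$ is Cohen--Macaulay of some dimension $d$, there is a system of parameters $f_1, \ldots, f_d \in \m A_\m$ which is a regular sequence; after clearing denominators I may assume $f_i \in \m$. The key geometric step is to shrink $U$ to a principal open $U' \ni x$ on which (a) $V((f_1,\ldots,f_d)) \cap U' = \{x\}$ set-theoretically, and (b) $f_1,\ldots,f_d$ remain a regular sequence. For (a), $(f_1,\ldots,f_d)A_\m$ is $\m A_\m$-primary, so any irreducible component of $V((f_1,\ldots,f_d))$ passing through $x$ must equal $\{x\}$, and the finitely many other components can be excised. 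For (b), exactness of the Koszul complex at the stalk spreads to a neighborhood. I then construct an ideal sheaf $\I_t$ on $X$ by gluing $\I_t|_{U'} := (f_1^t,\ldots,f_d^t) \subset \O_{U'}$ with $\I_t|_{X \setminus \{x\}} := \O_{X \setminus \{x\}}$; these agree on $U' \setminus \{x\}$ since $(f_1^t,\ldots,f_d^t)$ equals the structure sheaf there. The Koszul resolution of the regular sequence $f_1^t,\ldots,f_d^t$ shows that each $\I_t$ has finite homological dimension, and the equivalence $\{\I_t\} \sim \{\I_1^s\}$ follows from the corresponding equivalence for systems of parameters at the stalk.

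The main obstacle is the shrinking step: one must control both the vanishing locus and the regular-sequence property globally in a neighborhood of $x$. Once that is secured, gluing a Koszul-powered ideal on $U'$ to the trivial ideal sheaf on $X \setminus \{x\}$ is the cleanest way to manufacture a global ideal sheaf whose stalk at $x$ realizes the prescribed $\m_x$-primary, finite-pd ideal coming from the local lemma.
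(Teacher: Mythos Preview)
Your proposal is correct, and the reduction to closed points plus the reverse direction match the paper's argument essentially verbatim. The forward direction, however, takes a genuinely different route. The paper does not construct a regular sequence or Koszul complexes at all; instead it exploits the simple observation that any ideal sheaf $\I$ defining a closed subscheme structure on the single point $\{x_0\}$ satisfies $\I_x = \O_{X,x}$ for $x \neq x_0$, so $\hd(\I) = \pd_{\O_{X,x_0}}(\I_{x_0})$. This sets up a direct bijection between $\m_{x_0}$-primary ideals of $\O_{X,x_0}$ and ideal sheaves on $X$ cutting out $\{x_0\}$, under which finite projective dimension corresponds exactly to finite homological dimension. The local lemma then transfers to the global statement with no extra work: take the filtration $\{I_t\}$ produced by the local lemma and sheafify each $I_t$ (the quotient $\O_{X,x_0}/I_t$ is Artinian, hence already a finite module over any affine neighbourhood, so the sheafification is immediate). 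Your approach instead re-derives a specific such filtration by spreading out a system of parameters and invoking Koszul resolutions; this works, and has the virtue of giving an explicit $\CA$-resolution of each $\I_t$, but it costs you the shrinking argument you flagged as the main obstacle, which the paper's route avoids entirely.
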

\begin{proof}
    Let $V = \{ x_0\}$ for a closed point $x_0$. Now using \Cref{lem:CM-char-efpd}, $\O_{X,x_0}$ is Cohen-Macaulay if and only if there exists a filtration of $\m_{x_0}$-primary ideals $\{I_t\}_{t \in   \N}$ such that for any $s \in \N$, there exists $t$ such that $I_t \subseteq \m_{x_0}^s$ and each $I_t$ has finite projective dimension over $\O_{X,x_0}$. Recall that for any ideal sheaf $\I$ defining a closed subscheme structure on $V$, ${\I}_{x_0}$ is an $\m_{x_0}$-primary ideal in the local ring $(\O_{X,x_0},\m_{x_0})$ and ${\I}_x$ is zero for all $x \neq x_0$. Thus, the ideal sheaves $\I_t$ defining the closed subscheme structures on $V$ whose stalk at $x_0$ is $I_t$ have finite homological dimension and also satisfy the condition \textit{(i)} in \Cref{defn:eventually-finite-A-dim}. Therefore, for a closed point $x_0 \in X$, $\O_{X,x_0}$ is a Cohen-Macaulay ring if and only if $\{x_0\}$ has eventually finite $\V(X)$-dimension.
\end{proof}
Recall that a scheme $X$ is said to have \emph{characteristic} $p$ for a prime number $p>0$, if $p$ is zero in $\OX$.
\begin{defn}
    Let $X$ be a noetherian scheme of prime characteristic $p>0$ and $\I$ an ideal sheaf in $X$. For $e \in \N$, the $e$-th \emph{Frobenius power} of $\I$, denoted by $\I^{[p^e]}$, is defined as the sheafification of the presheaf $U \mapsto I(U)^{[p^e]}$ for any affine open set $U$ in $X$.
\end{defn}
Note that $(\I^{[p^e]})_x = {(\I_x)}^{[p^e]}$ for all $x \in X$.
\begin{eg}\label{eg:efhd-char-p}
  Let $X$ be a noetherian scheme of prime characteristic $p>0$. Suppose $\I$ is an ideal sheaf of finite homological dimension. Then $\pd_{  \O_{X,x}}(\I_x) < \infty$ for all $x \in X$. By \cite[Theorem 1.7]{Peskine_Szpiro_thesis}, $\pd_{\tiny  \O_{X,x}}((\I_x)^{[p^e]}) = \pd_{\tiny \O_{X,x}}(\I_x) < \infty$ for all $e \in \N$. Since $\pd_{\tiny  \O_{X,x}}((\I^{[p^e]})_x) = \pd_{  \O_{X,x}}((\I_x)^{[p^e]})$ for all $x \in X$, $\hd_X(\I^{[p^e]}) = \hd_X(\I) < \infty$ for all $e \in \N$. Since the filtrations of ideal sheaves $\{ \I^{[p^e]} \}_{e \in \tiny \N}$ and $\{ \I^{n} \}_{n \in \tiny \N}$ are equivalent to each other, we can conclude that the closed set of $X$ corresponding to $\I$ has eventually finite $\V(X)$-dimension, whenever $\I$ has finite $\V(X)$-dimension.
\end{eg}
From now on, we assume $X$ is a quasi-projective scheme over an affine scheme, $\CA$ is a resolving subcategory of $Coh(X)$  which contains locally free sheaves on $X$ and also closed under shifts, and $\T$ is a thick subcategory of $Coh(X)$ containing $\CA$. Note that assuming $\V(X)$ to be contained in $\CA$ implies that $Coh(X)$ has enough $\CA$-objects. Now we state one of the main theorems of the section. 
    \begin{thm}\label{thm:SR-prop-holds-cohx}
        Let $V$ be a closed set, $\L = \L_{V}$ and $\T$ a thick subcategory of $Coh(X)$. Suppose $\T$ contains a resolving subcategory $\CA$ of $Coh(X)$ which contains $\OX$ and is closed under shifts, and $V$ has eventually finite $\CA$-dimension. Then $\L$ has the SR property with respect to $\T$.
    \end{thm}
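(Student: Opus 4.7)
The plan is to follow the template of the graded version \Cref{thm:graded-efpd-implies-SR-prop}, with \Cref{lem:foxby-morphism-for-schemes} as the scheme-theoretic counterpart of the graded Foxby--Tate construction. Given a non-exact $\G_\bullet \in Ch^b_\L(Coh(X))$ with $\min(\G_\bullet) = m$ and a morphism $\lambda \colon \G_m \to \mathcal Q$ with $\mathcal Q \in \L$, I would first apply \Cref{lem:foxby-morphism-for-schemes} to produce a chain complex $\CU_\bullet \in Ch^+_\L(\V(X))$ and a chain map $\beta \colon \CU_\bullet \to \G_\bullet$ satisfying properties (1)--(5) of that lemma. In particular $\supph(\CU_\bullet) = \{m\}$ with $\H_m(\CU_\bullet) = (\OX/\I) \otimes \OX(n)$ for some ideal sheaf $\I$ defining a closed subscheme structure on $V$ and some $n \in \N$, and $\lambda \circ \beta_m$ factors through $\H_m(\CU_\bullet)$.

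Next I would refine $\I$ using the eventually finite $\CA$-dimension hypothesis. Let $\{\I_t\}_{t \in \N}$ be the filtration of ideal sheaves from \Cref{defn:eventually-finite-A-dim}. Since $\I$ also defines $V$, noetherianness gives $\I_1^s \subseteq \I$ for some $s$, and by the equivalence of filtrations there exists $t$ with $\I' := \I_t \subseteq \I_1^s \subseteq \I$ and $\dim_\CA(\I') < \infty$. Setting $K := (\OX/\I') \otimes \OX(n)$, the short exact sequence $0 \to \I' \to \OX \to \OX/\I' \to 0$ together with $\OX \in \CA$ and \Cref{cor:inequalities-of-resolving-dim} yields $\dim_\CA(\OX/\I') < \infty$, and since $\CA$ is closed under the exact functor $- \otimes \OX(n)$, we obtain $\dim_\CA(K) < \infty$. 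Moreover $K \in \L$ since $\Supp(K) \subseteq V$, and the inclusion $\I' \subseteq \I$ induces a natural surjection $K \twoheadrightarrow \H_m(\CU_\bullet)$.

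The crux is to construct a bounded $\CA$-resolution $T_\bullet$ of $K$ (shifted so that $\min_c(T_\bullet) = m$) together with a chain map $\tau \colon T_\bullet \to \CU_\bullet$ lifting the surjection on $\H_m$. Since $\V(X) \subseteq \CA$ by our standing hypothesis, $\CU_\bullet$ is itself an $\CA$-resolution of $\H_m(\CU_\bullet)$ shifted to degree $m$; applying the pullback-based construction of \Cref{rmk:resolving-subcategories}(2) to the surjection $K \twoheadrightarrow \H_m(\CU_\bullet)$ yields a (possibly unbounded) $\CA$-resolution $A_\bullet$ of $K$ together with a chain map $A_\bullet \to \CU_\bullet$ lifting it. Using $\dim_\CA(K) < \infty$, \Cref{A-dim} guarantees that the appropriate syzygy in $A_\bullet$ at degree $N = \dim_\CA(K)$ lies in $\CA$, so one can truncate $A_\bullet$ to a bounded $\CA$-resolution $T_\bullet$ of $K$ and, using the exactness of $\CU_\bullet$ above degree $m$, modify the chain map at the truncated top term so as to obtain $\tau \colon T_\bullet \to \CU_\bullet$.

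Setting $\alpha := \beta \circ \tau \colon T_\bullet \to \G_\bullet$, the four conditions of \Cref{defn:SR-property-for-an-abelian-category} then follow directly: $\min_c(T_\bullet) = m$ and $\supph(T_\bullet) = \{m\}$ hold by construction; $H_m(\alpha) = H_m(\beta) \circ H_m(\tau)$ is a composite of epimorphisms (the former by property (4) of \Cref{lem:foxby-morphism-for-schemes}, the latter being the natural surjection by construction), hence epi; and the factorization of $\lambda \circ \alpha_m = \lambda \circ \beta_m \circ \tau_m$ through $\H_m(T_\bullet)$ follows by combining property (5) of the lemma with the chain-map property of $\tau$. The main obstacle will be the truncation step: although both $A_\bullet$ and the chain map to $\CU_\bullet$ are provided by \Cref{rmk:resolving-subcategories}(2), making the truncation to bounded length compatible with a chain map to $\CU_\bullet$ requires carefully modifying the top term, leveraging that $\CU_\bullet$ remains exact above degree $m$. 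This is the scheme-theoretic analogue of the graded lift in the proof of \Cref{thm:graded-efpd-implies-SR-prop}, where projectivity of $\starf(S)$-resolutions in $\starmods$ makes the analogous lift immediate.
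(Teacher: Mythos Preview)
Your proposal is correct and follows essentially the same route as the paper: the paper proves the theorem via \Cref{lem:prep-for-SR-prop}, whose proof applies \Cref{lem:foxby-morphism-for-schemes} to obtain $\CU_\bullet$, uses the eventually finite $\CA$-dimension hypothesis to find $\I_s \subseteq \I$ with $\dim_{\CA}(\OX/\I_s) < \infty$, invokes \Cref{rmk:resolving-subcategories}(2) to lift the surjection $(\OX/\I_s)\otimes\OX(n) \twoheadrightarrow \H_m(\CU_\bullet)$ to a map of $\CA$-resolutions, and then takes a soft truncation.

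Your concern about the truncation step is overblown: once you have a chain map $A_\bullet \to \CU_\bullet$, the soft truncation $T_\bullet = \tau_{\leq N}A_\bullet$ (with $T_N = Z_N(A_\bullet)$) automatically admits a chain map to $\CU_\bullet$, since any chain map sends $Z_N(A_\bullet)$ into $Z_N(\CU_\bullet) \subseteq \CU_N$. No modification of the top term or appeal to exactness of $\CU_\bullet$ in high degrees is required. (Incidentally, the objects $\CU_i$ produced in \Cref{lem:foxby-morphism-for-schemes} are in fact restrictions of finite direct sums of $\O_Y(n)$, so they lie in $\CA$ already from the hypotheses ``$\OX \in \CA$'' and ``closed under shifts'' together with (R2), without needing the full standing assumption $\V(X) \subseteq \CA$.)
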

    The theorem is a consequence of \Cref{lem:prep-for-SR-prop} below. Before stating the lemma, we set up some notations.

Define $g(V,\CA)$ as follows:
For a closed set $V$ and an ideal sheaf $\I$ defining a closed subscheme structure on $V$, let $S(V) = \{ \{\I_t\}_{t \in   \N} \mid \{\I_t\}_{t \in   \N} \text{\ is\ equivalent\ to\ } \{\I^n\}_{n \in   \N}\}$. Now we define $$g(V,\CA) = \inf\limits_{\{\tiny \I_t\} \in S(V)} \limsup\limits_{t \in \tiny \N} \{ \dim_{\tiny \CA}(\OX/\I_t)  \}.$$
\begin{lemma}\label{lem:prep-for-SR-prop}
    Let $V \subseteq X$ be a closed set, $\L = \L_{V}$, $\G_\bullet \in Ch^b_{\tiny \L}(Coh(X))$ with $\min(\G_\bullet) = m$ and $\lambda \colon \G_m \to \mathcal Q$ a morphism with $\mathcal Q \in \L$. Suppose $V$ has eventually finite $\CA$-dimension. Then there exists a chain complex $\CT_\bullet \in Ch^{b}_{\tiny \L}(\CA)$ and a chain complex map $\alpha\colon \CT_\bullet \to \G_\bullet$ such that $(\CT_\bullet,\alpha)$ is a strong reducer of $(\G_\bullet, \lambda)$. Further if $g(V,\CA) < \infty$, then $\dim_{\tiny \CA}(\H_m(\CT_\bullet)) = g(V,\CA)$.
    \end{lemma}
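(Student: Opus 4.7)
The plan is to combine \Cref{lem:foxby-morphism-for-schemes} with the eventually finite $\CA$-dimension hypothesis and finish with a bounded truncation. First, apply \Cref{lem:foxby-morphism-for-schemes} to $(\G_\bullet, \lambda)$ to produce $\CU_\bullet \in Ch^+_{\tiny \L}(\V(X))$ with $\min_c(\CU_\bullet) = m$, $\supph(\CU_\bullet) = \{m\}$, $\H_m(\CU_\bullet) = (\OX/\I) \otimes \OX(n_0)$ for some ideal sheaf $\I$ defining a closed subscheme structure on $V$ and some $n_0 \in \N$, together with a chain map $\beta \colon \CU_\bullet \to \G_\bullet$ such that $\H_m(\beta)$ is an epimorphism and $\lambda \circ \beta_m$ factors through $\H_m(\CU_\bullet)$.

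Next, locate an ideal sheaf $\I' \subseteq \I$ with $\dim_{\tiny \CA}(\OX/\I') = g(V,\CA)$. Since $V$ has eventually finite $\CA$-dimension, $g(V,\CA)$ is a finite natural number realized by some filtration $\{\I_t\}_{t \in \N}$ equivalent to $\{\I^s\}_{s \in \N}$ (infima of $\N$-valued limsups are attained). An $\N$-valued sequence with finite limsup attains that limsup infinitely often, so infinitely many $t$ satisfy $\dim_{\tiny \CA}(\OX/\I_t) = g(V,\CA)$; by equivalence of filtrations, $\I_t \subseteq \I$ for all sufficiently large $t$. Fixing a $t_0$ satisfying both, set $\I' = \I_{t_0}$; the inclusion $\I' \subseteq \I$ induces a natural surjection $(\OX/\I') \otimes \OX(n_0) \twoheadrightarrow \H_m(\CU_\bullet)$, and $\dim_{\tiny \CA}((\OX/\I') \otimes \OX(n_0)) = g(V,\CA)$ because $\CA$ is closed under shifts.

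Applying \Cref{rmk:resolving-subcategories}(2) to this surjection and the augmented $\CA$-resolution $\CU_\bullet \to \H_m(\CU_\bullet)$ yields an $\CA$-resolution $\tilde{\CT}_\bullet$ of $(\OX/\I') \otimes \OX(n_0)$ (a priori infinite, placed in degrees $\geq m$) and a chain map $\phi \colon \tilde{\CT}_\bullet \to \CU_\bullet$ lifting the surjection; set $\tilde\alpha = \beta \circ \phi$. Choose $D \geq \max(m + g(V,\CA),\ \max_c(\G_\bullet))$ and define $\CT_\bullet$ in degrees $[m, D+1]$ by $\CT_i = \tilde{\CT}_i$ for $m \leq i \leq D$ and $\CT_{D+1} = Z_D(\tilde{\CT}_\bullet)$, with differential $\CT_{D+1} \to \CT_D$ the inclusion $Z_D \hookrightarrow \tilde{\CT}_D$. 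By \Cref{A-dim}, $Z_D(\tilde{\CT}_\bullet) \in \CA$ since $D \geq m + g(V,\CA)$, so $\CT_\bullet \in Ch^b_{\tiny \L}(\CA)$ is an $\CA$-resolution of $(\OX/\I') \otimes \OX(n_0)$ shifted to degree $m$. Define $\alpha \colon \CT_\bullet \to \G_\bullet$ by $\alpha_i = \tilde\alpha_i$ for $m \leq i \leq D$ and $\alpha_{D+1} = 0$; this is a chain map because $\G_{D+1} = 0$ (from $D \geq \max_c(\G_\bullet)$) and the chain-map identity $\tilde\alpha_D \circ \partial^{\tilde{\CT}}_{D+1} = \partial^{\G}_{D+1} \circ \tilde\alpha_{D+1} = 0$ forces $\tilde\alpha_D|_{Z_D} = 0$ since $Z_D = \mathrm{im}(\partial^{\tilde{\CT}}_{D+1})$. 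Conditions (1)--(2) hold by construction, (3) because $\H_m(\alpha) = \H_m(\beta) \circ \H_m(\phi)$ is a composition of epimorphisms, and (4) from \Cref{lem:foxby-morphism-for-schemes}(5) combined with $\phi$ lifting the surjection onto $\H_m(\CU_\bullet)$, while $\dim_{\tiny \CA}(\H_m(\CT_\bullet)) = g(V,\CA)$ by the choice of $\I'$. The key technical subtlety is balancing the truncation degree $D$ to dominate both $m + g(V,\CA)$ (so $Z_D \in \CA$ via \Cref{A-dim}) and $\max_c(\G_\bullet)$ (so the chain map extends trivially at the top while remaining a chain map).
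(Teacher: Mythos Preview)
Your proof is correct and follows essentially the same route as the paper's: apply \Cref{lem:foxby-morphism-for-schemes} to obtain $(\CU_\bullet,\beta)$, use the eventually finite $\CA$-dimension hypothesis to find $\I' \subseteq \I$ with $\dim_{\CA}(\OX/\I') = g(V,\CA)$, lift the surjection $(\OX/\I')\otimes\OX(n_0) \twoheadrightarrow \H_m(\CU_\bullet)$ to a map of $\CA$-resolutions via \Cref{rmk:resolving-subcategories}, and truncate. The paper simply truncates at $g(V,\CA)$ and invokes ``the induced chain complex map'' without further comment; your version is more explicit, truncating at $D \geq \max(m+g(V,\CA),\ \max_c(\G_\bullet))$ and verifying directly that setting $\alpha_{D+1}=0$ yields a chain map (using $Z_D = \mathrm{im}(\partial_{D+1}^{\tilde\CT})$ and $\G_{D+1}=0$). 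This extra care is a genuine clarification of a point the paper leaves implicit, but the underlying argument is the same.
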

\begin{proof}
From \Cref{lem:foxby-morphism-for-schemes}, we get $\CU_\bullet \in Ch^{+}_{\tiny \L}(\V(X))$ and a chain complex map $\beta \colon \CU_\bullet \to \G_\bullet$ satisfying the properties $(1)- (5)$ as in \Cref{lem:foxby-morphism-for-schemes}. In particular,
$\H_m(\CU_\bullet) \cong (\OX/\I) \otimes \OX(n)$, for some ideal sheaf $\I$ defining a closed subscheme structure on $V$ and $n \in \N$. 
Since $V$ has eventually finite $\CA$-dimension, there exists a filtration $\{ \I_t \}$ equivalent to $\{\I^s\}$ where $\dim_{\tiny \CA}(\OX/\I_t) < \infty$ for all $t \in \N$ and $g(V,\CA) = \limsup\limits_{t \in \tiny \N} \{ \dim_{\tiny \CA}(\OX/\I_t)\}$. Note that there exists $t_0 \in \N$ such that for all $t \geq t_0$, $\I_t \subseteq \I$.
    Let $s \geq t_0$. Since $\dim_{\tiny \CA}((\OX/\I_s) \otimes \OX(n)) = \dim_{\tiny \CA}(\OX/\I_s) = r$ (say), using \Cref{rmk:resolving-subcategories}(2), there exists an $\CA$-resolution of $(\OX/\I_s) \otimes  \OX(n)$ that has a morphism to $\CU_\bullet$ which lifts the surjection $(\O_X/\mathcal I_s)\otimes \OX(n) \to (\O_X/\mathcal I)\otimes \OX(n)$. Call $\CT_\bullet$ to be the soft truncation of the chosen resolution of $(\O_X/\mathcal I_s) \otimes \OX(n)$ at $r+m$, and $\alpha \colon \CT_\bullet \to \G_\bullet$ be the induced chain complex map. Now it is easy to see that $(\CT_\bullet,\alpha)$ is a strong reducer of $(\G_\bullet, \lambda)$. 
    \par If $g(V,\CA) < \infty$, then choose $s$ such that $s \geq t_0$ and $g(V,\CA) = \dim_{\tiny \CA}(\OX/\I_s) = r$. Since $r = \dim_{\tiny \CA}((\OX/\I_s) \otimes \OX(n))$, we get $\dim_{\tiny \CA}(\H_m(\CT_\bullet)) = g(V,\CA)$.
    \end{proof}
    The following corollary is a strengthening of \Cref{thm:SR-prop-holds-cohx} and its proof is exactly on the same lines as  \Cref{rmk:SRproperty-extension-graded}.
    \begin{cor}\label{rmk:SRproperty-extension-scheme}
    Let $\T$ and $\CA$ be as in \Cref{thm:SR-prop-holds-cohx}. Suppose $W$ is a specialization closed set of $X$ such that for each closed set $W'\subseteq W$, there exists a closed set $W''$ of $X$ with $W' \subseteq W'' \subseteq W$ and $W''$ has eventually finite $\CA$-dimension. Then the Serre subcategory $\L_W$ consisting of objects $ \{ \F \in Coh(X) \mid \Supp_X(\F) \subseteq W\}$ has the SR property with respect to $\T$. 
\end{cor}
    As a consequence of \Cref{lem:prep-for-SR-prop}, we get the following corollary.
    \begin{cor}\label{cor:surjectivity-from-perfect-sheaf}
        Let $V \subseteq X$ be a closed set having eventually finite $\CA$-dimension and $\L = \L_{V}$. Then for each $\F \in \L$, there exist $\F' \in \L$ with an epimorphism $\F' \to \F$ and $\dim_{\tiny \CA}(\F') \leq g(V,\CA)$.
    \end{cor}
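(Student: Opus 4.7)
The plan is to obtain this corollary as a direct specialization of Lemma \ref{lem:prep-for-SR-prop}. Given $\F \in \L$, I would view it as a one-term chain complex $\G_\bullet$ with $\G_0 = \F$ and $\G_i = 0$ for $i \neq 0$, so that (in the nontrivial case $\F \neq 0$) $\min(\G_\bullet) = 0$ and $\G_\bullet \in Ch^b_{\tiny \L}(Coh(X))$. I would then take $\mathcal Q = 0 \in \L$ and let $\lambda \colon \F \to 0$ be the zero morphism; these choices make the hypotheses of Lemma \ref{lem:prep-for-SR-prop} trivially valid.

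Applying the lemma to the pair $(\G_\bullet, \lambda)$ produces a chain complex $\CT_\bullet \in Ch^b_{\tiny \L}(\CA)$ and a chain complex map $\alpha \colon \CT_\bullet \to \G_\bullet$ which is a strong reducer of $(\G_\bullet, \lambda)$ and satisfies $\dim_{\tiny \CA}(\H_0(\CT_\bullet)) = g(V, \CA)$. I would then set $\F' \coloneqq \H_0(\CT_\bullet)$. Since $\CT_\bullet$ has all its homology in $\L$, we have $\F' \in \L$; property (3) in the definition of a strong reducer supplies the required epimorphism $\H_0(\alpha) \colon \F' \twoheadrightarrow \H_0(\G_\bullet) = \F$; and the equality $\dim_{\tiny \CA}(\F') = g(V, \CA)$ is immediate from the lemma.

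This proof is essentially a bookkeeping exercise: all substantive work, namely constructing a locally free resolution through Lemma \ref{lem:foxby-morphism-for-schemes} and then refining it to an $\CA$-resolution of the prescribed length using the eventually finite $\CA$-dimension hypothesis on $V$, has already been carried out inside Lemma \ref{lem:prep-for-SR-prop}. I do not foresee any real obstacle; the only cosmetic point is the degenerate case $\F = 0$, where one may take $\F' = 0$, or else use the $\CA$-resolution of $(\O_X/\I_s)(n)$ (with $s$ chosen so that $\dim_{\tiny \CA}(\O_X/\I_s) = g(V, \CA)$, as in the proof of the lemma) mapped by zero onto $\F$.
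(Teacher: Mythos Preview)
Your proposal is correct and follows essentially the same approach as the paper: apply Lemma~\ref{lem:prep-for-SR-prop} to the one-term complex $\G_\bullet = 0 \to \F \to 0$ and set $\F' = \H_0(\CT_\bullet)$. Your version is in fact slightly more careful, since you specify the auxiliary data $(\mathcal Q,\lambda)=(0,0)$ and address the degenerate case $\F=0$, neither of which the paper spells out.
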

  \begin{proof}
     If $g(V,\CA)$ is infinite, there is nothing to prove. So we assume $g(V,\CA)< \infty$. Applying \Cref{lem:prep-for-SR-prop} to $\G_\bullet = 0 \to \F \to 0$ and $\lambda = \operatorname{id}_{\F}$, there exists $\F' \coloneqq \H_0(\CT_\bullet)$ with an epimorphism $\F' \to \F$ with $\dim_{\tiny \CA}(\F') = g(V,\CA)$.
  \end{proof}
  Applying \Cref{thm:SR-prop-holds-cohx} to \Cref{thm:derived-equivalence-AbarcapL-AbarsuppL-AsuppL}, and the above corollary to \Cref{thm:derived-eq-keller-abelian-category}, yields the following: 
  \begin{thm}\label{thm:der-eq-scheme}
      Let $X$ be a quasi-projective scheme over a noetherian affine scheme, $\CA$ a resolving subcategory of $Coh(X)$ which contains $\OX$ and is closed under shifts, $V \subseteq X$ a closed set, $\L = \L_V$, and $g = g(V,\CA)$. Suppose $V$ has eventually finite $\CA$-dimension. Then $$D^b(\overline{\CA}^{\leq g} \cap \L) \simeq D^b(\overline{\CA} \cap \L) \simeq D^b_{\tiny \L}(\overline{\CA}) \simeq D^b_{\tiny \L}(\CA).$$
  \end{thm}


 Recall that for a sheaf $\F$, the grade of $\F$ is defined as $$\grade_X(\F) \coloneqq \min \{ i \mid \sheafext^i_X(\F,\OX) \neq 0 \}.$$ For a closed set $V \subseteq X$, define $\grade_X(V) \coloneqq \grade(\OX/\I_V)$ for an ideal sheaf $\I_V$ defining a closed subscheme on $V$. It is well defined and does not depend on the choice of $\I_V$. For more details on the definition and its well-definedness, we refer to \cite[Appendix A]{Mandal2025}.  
\begin{defn}
    We say a sheaf $\F \in Coh(X)$ is \emph{perfect} if $\grade_X(\F) = \hd_X(\F)$. An ideal sheaf $\I$ is said to be \emph{perfect} if the sheaf $\OX/\I$ is perfect.
\end{defn}

Similar proofs as those for graded modules yield that $\grade_X(V) \leq g(V,\V(X))$, and when $\I_V$ is perfect and the characteristic of $X$ is prime, $g(V,\V(X)) = \grade_X(V)$. Thus, we have the following theorem.
\begin{thm}\label{thm:der-eq-schemes-locally-free}
Let $X$ be a quasi-projective scheme over a noetherian affine scheme, $V \subseteq X$ a closed set, and $\L = \L_{V}$. Suppose $X$ has prime characteristic and $\I_V$ is perfect with $\grade_X(\I_V) = g$ for some ideal sheaf $\I_V$ defining a closed subscheme structure on $V$. Then $$D^b(\overline{\V(X)}^{\leq g} \cap \L) \simeq D^b( \overline{\V(X)} \cap \L) \simeq D^b_{\tiny \L}( \overline{ \V(X)}) \simeq D^b_{\tiny \L}(\V(X)).$$
\end{thm}
\begin{proof}
    Since $\I_V$ is perfect and $X$ has characteristic $p$, by \Cref{eg:efhd-char-p}, $V$ has eventually finite $\V(X)$-dimension. Using $g(V,\V(X)) = \grade_X(V)$ and applying \Cref{thm:der-eq-scheme}, we get the derived equivalence as required. 
\end{proof}

\section{Results about $\Kth$ and $\GWth$ spectra}\label{sec:result-on-K-theory-and-G-theory}
\par In this section, we briefly summarize the consequences of the derived equivalences in the previous sections for (non-connective) $\Kth$-theory and $\GWth$-theory. We recall the notations. $X$ is a quasi-projective scheme over a noetherian affine scheme, $V$ is a closed set, $U = X\backslash V$ and $\L = \L_{V}$ is the corresponding Serre subcategory of $Coh(X)$. Then there is a sequence of triangulated categories which is exact upto factors
$$D^b_{\tiny \L}(\V(X)) \to D^b(\V(X)) \to D^b(\V(U)).$$ For a proof, we refer to \cite[Proposition A.4.7]{Marco-algebraic-k-theory}. After applying the $\Kth$ and $\GWth$ functors to the above sequence, we get a homotopy fibration sequence. The previously proved derived equivalences then yield a description of the homotopy fiber in that sequence as described ahead.
\par More specifically, we state below the result obtained by applying the non-connective K-theory spectrum functor $\Kth$.
\begin{thm}\label{thm:K-thry-fibration}
    Let $X$ be a quasi-projective scheme over a noetherian affine scheme, $V$ a closed set having eventually finite $\V(X)$-dimension, $\L = \L_V$ and $g = g(V,\V(X))$. Then $$\Kth(\overline{\V(X)}^{\leq g} \cap \L) \to \Kth(\V(X)) \to \Kth(\V(U)) $$ is a homotopy fibration in the category $\spc$ of spectra.
\end{thm}
A similar result holds for the $\GWth$ bispectra. We first introduce the notations required for the statement. Let $X$ be a quasi-projective scheme over a noetherian ring $A$ with $1/2 \in A$, $\mathcal L$ an invertible sheaf on $X$, and $\I_V$ an ideal sheaf defining a closed subscheme structure on $V$. Suppose $\I_V$ is perfect with $\grade_X(\I_V) = g$. For $\F \in \overline{\V(X)}^{\leq g} \cap \L$, consider the sheaf $\sheafext^g_X(\F,\mathcal L)$. Since $\I_V$ is perfect of grade $g$, $\F$ is also perfect of grade $g$. Then it follows that $\sheafext^g_X(\F,\mathcal L) \in \overline{\V(X)}^{\leq g} \cap \L$. Hence we have a duality on $\overline{\V(X)}^{\leq g} \cap \L$ given by $\F$ mapping to $\sheafext^{g}_X(\F,\mathcal L)$. 
    \par Also there is a duality on $D^b_{\L}(\V(X))$ given by $\Sigma^{-g}\sheafhom_X(\_,\mathcal L)$. Further, when $X$ has prime characteristic, the equivalence $D^b(\overline{\V(X)}^{\leq g} \cap \L)  \simeq D^b_{\L}(\V(X))$ in \Cref{thm:der-eq-schemes-locally-free} given by taking resolutions also preserves dualities. As a consequence, we get the following:

\begin{thm}\label{thm:G-thry-fibration}
    Let $X$ be a quasi-projective scheme over $Spec(A)$ for a noetherian ring $A$ with $1/2 \in A$, $\mathcal L$ an invertible sheaf on $X$, $V \subseteq X$ a closed set, $U = X \setminus V$ an open set, and $\L = \L_V$. Suppose $X$ has prime characteristic and $\I_V$ is perfect with $\grade_X(\I_V) = g$ for some ideal sheaf $\I_V$ defining a closed subscheme structure on $V$. Then the sequence
\[
\GWth^{[-g]}(\overline{\V(X)}^{\leq g} \cap \L) \to \GWth(\V(X)) \to \GWth(\V(U))
\]
is a homotopy fibration of in the category $\bisp$ of bispectra.
\end{thm}
The agreement theorem, excision theorem and the Mayer-Vietoris sequences in \cite[Theorems 6.10 to 6.17]{Mandal2025} for $\Kth$-theory spectra and $\GWth$ bispectra also hold in our case, where $\Kth$-theory statements need the hypotheses in \Cref{thm:K-thry-fibration} and $\GWth$-theory statements need the hypotheses in \Cref{thm:G-thry-fibration}.

Regarding the proofs, note that whenever $V$ has eventually finite $\V(X)$-dimension, applying \Cref{thm:der-eq-scheme} with $\CA = \V(X)$ yields $D^b(\overline{\V(X)}^{\leq g} \cap \L) \simeq D^b_{\tiny \L}(\V(X))$ where $g = g(V,\V(X))$. This equivalence implies all the results in this section and we skip the proofs since they follow in the same way as in \cite[Sections 4 to 6]{Mandal2025}. For the convenience of the reader, we mention the differences between our notation and the one in \cite{Mandal2025}: In \cite{Mandal2025}, $Z$ is a closed set and $Coh^{Z}(X)$ is the Serre subcategory of all sheaves supported on $Z$, whereas we use $V$ as a closed set and $\L_V$ is the corresponding Serre subcategory. Also, $\mathbb M(X), \mathbb M^Z(X), C\mathbb M^Z(X)$ and $\mathscr D^b_Z(\V(X))$ in \cite{Mandal2025} appear in our article as $\overline{\V(X)}, \overline{\V(X)}\cap \L_V, \overline{\V(X)}^{\leq g} \cap \L_V$ and $D^b_{\tiny \L_V}(\V(X))$ respectively, where $g = \grade_X(V)$.

\section*{Acknowledgments}
The authors are grateful to Henning Krause for valuable discussions during his visit to IIT Madras. The authors thank Rahul Gupta for providing useful suggestions concerning the notation used in the article. Ganapathy Krishnamoorthy would like to acknowledge the support from the Prime Minister's Research Fellowship (PMRF) scheme for carrying out this research work.


\begin{thebibliography}{10}

\bibitem{Auslander-Bridger}
Maurice Auslander and Mark Bridger.
\newblock {\em Stable module theory}, volume No. 94 of {\em Memoirs of the American Mathematical Society}.
\newblock American Mathematical Society, Providence, RI, 1969.

\bibitem{Fossum-Foxby74}
Robert Fossum and Hans-Bj{\o}rn Foxby.
\newblock The category of graded modules.
\newblock {\em Math. Scand.}, 35:288--300, 1974.

\bibitem{Foxby-Halvorsen}
Hans-Bj{\o}rn Foxby and Esben~Bistrup Halvorsen.
\newblock Grothendieck groups for categories of complexes.
\newblock {\em J. K-Theory}, 3(1):165--203, 2009.

\bibitem{GanapathySane}
K.~Ganapathy and Sarang Sane.
\newblock Derived equivalences via {T}ate resolutions.
\newblock {\em J. Algebra}, 672:89--119, 2025.

\bibitem{Grothendieck}
A.~Grothendieck.
\newblock Groupes de classes des categories abeliennes et triangulees. complexes parfaits.
\newblock In Luc Illusie, editor, {\em S{\'e}minaire de G{\'e}om{\'e}trie Alg{\'e}brique du Bois-Marie 1965--66 SGA 5}, pages 351--371, Berlin, Heidelberg, 1977. Springer Berlin Heidelberg.

\bibitem{Hartshorne}
Robin Hartshorne.
\newblock {\em Algebraic geometry}, volume No. 52 of {\em Graduate Texts in Mathematics}.
\newblock Springer-Verlag, New York-Heidelberg, 1977.

\bibitem{Ruben-Sondre-Roosmalen}
Ruben Henrard, Sondre Kvamme, and Adam-Christiaan van Roosmalen.
\newblock Auslander's formula and correspondence for exact categories.
\newblock {\em Adv. Math.}, 401:Paper No. 108296, 65, 2022.

\bibitem{Keller}
Bernhard Keller.
\newblock On the cyclic homology of exact categories.
\newblock {\em J. Pure Appl. Algebra}, 136(1):1--56, 1999.

\bibitem{HenningKrause}
Henning Krause.
\newblock {\em Homological theory of representations}, volume 195 of {\em Cambridge Studies in Advanced Mathematics}.
\newblock Cambridge University Press, Cambridge, 2022.

\bibitem{Mandal2015}
Satya Mandal.
\newblock Foxby-morphism and derived equivalences.
\newblock {\em J. Algebra}, 440:113--127, 2015.

\bibitem{Mandal2025}
Satya Mandal.
\newblock Localization problems of {Q}uillen.
\newblock {\em J. Algebra}, 680:205--266, 2025.

\bibitem{Peskine_Szpiro_thesis}
C.~Peskine and L.~Szpiro.
\newblock Dimension projective finie et cohomologie locale. {A}pplications \`a la d\'{e}monstration de conjectures de {M}. {A}uslander, {H}. {B}ass et {A}. {G}rothendieck.
\newblock {\em Inst. Hautes \'{E}tudes Sci. Publ. Math.}, (42):47--119, 1973.

\bibitem{SandersSane}
William~T. Sanders and Sarang Sane.
\newblock Finite homological dimension and a derived equivalence.
\newblock {\em Trans. Amer. Math. Soc.}, 369(6):3911--3935, 2017.

\bibitem{Marco-algebraic-k-theory}
Marco Schlichting.
\newblock Higher algebraic {$K$}-theory.
\newblock In {\em Topics in algebraic and topological {$K$}-theory}, volume 2008 of {\em Lecture Notes in Math.}, pages 167--241. Springer, Berlin, 2011.

\bibitem{weibelhomological}
Charles~A. Weibel.
\newblock {\em An introduction to homological algebra}, volume~38 of {\em Cambridge Studies in Advanced Mathematics}.
\newblock Cambridge University Press, Cambridge, 1994.

\end{thebibliography}
\end{document}